\newcommand{\body}{Z}                   
\newcommand{\bodyint}{\mathring{Z}}             
\newcommand{\wvec}{\mathsf{w}}
\newcommand{\centroid}{\operatorname{centroid}}
\newcommand{\vol}{\operatorname{vol}}
\newcommand{\Vol}{\operatorname{Vol}}
\newcommand{\mathapprox}[1]{\mathscr{#1}}       
\newtheorem{prop}{Proposition}[section]
\newtheorem{defn}{Definition}[section]
\newtheorem{theorem}{Theorem}[section]
\newtheorem{corollary}{Corollary}[section]
\newtheorem{lemma}{Lemma}[section]
\newtheorem*{maintheorem}{Main Theorem}
\numberwithin{theorem}{section}
\numberwithin{equation}{section}
\numberwithin{figure}{section}
\newcommand{\TheTitle}{A Centroid for Sections of a Cube in a Function Space, with application to Colorimetry}
\newcommand{\TheAuthors}{Glenn Davis}
\title[A Centroid for Sections of a Cube in a Function Space]{\TheTitle}
\author{Glenn Davis}
\email{gdavis@gluonics.com}
\begin{document}

\subjclass[2010]{45Q05, 65R32, 41A60, 44A10}

\date{March 13, 2020. version 4}

\begin{abstract}
The definition of the centroid in finite dimensions does not apply in a function space
because of the lack of a translation invariant measure.
Another approach, suggested by Nik Weaver, is to use a suitable collection of finite-dimensional subspaces.
For a specific collection of subspaces of $L^1[0,1]$, 
this approach is shown to be successful when the subset
is the intersection of a cube with a closed affine subspace of finite codimension.
The techniques used are the classical Laplace Transform and saddlepoint method for asymptotics.
Applications to spectral reflectance estimation in colorimetry are presented.
\end{abstract}

\maketitle


\section{Introduction}\label{sec:introduction}

Operator equations often have infinitely many solutions,
even when the solution is constrained to a feasible region.
We consider the finite-rank surjective operator equation:
\begin{equation}\label{eq:operator}
\Lambda f = y  \hspace{30pt} \text{where} \hspace{5pt}  \Lambda : L^1[0,1] ~ \twoheadrightarrow ~ \mathbb{R}^m \hspace{5pt} \text{is linear and continuous}
\end{equation}
with feasible region the unit cube $Q^\infty \subset L^\infty[0,1] \subset L^1[0,1]$.
$Q^\infty$ is the non-negative part of the unit ball in $L^\infty[0,1]$.
When the domain and range of $\Lambda$ are arbitrary Banach spaces,
a \emph{regularizer} (or \emph{penalty function} or \emph{stabilizing functional})
is often used to single out a solution, using prior knowledge about the desired solution, see \cite{schuster2012} and \cite{tarantola2005}.

The finite-rank case is a big simplification, and  allows for novel methods for singling out a solution.
An essentially bounded function $\mathbf{w} : [0,1] \to \mathbb{R}^m$ 
induces a suitable operator $\Lambda_\mathbf{w}$ defined by 
$\Lambda_\mathbf{w} f = \int_0^1 f(x) \mathbf{w}(x) \, dx$.
Conversely, a given operator $\Lambda$ is 
defined by $m$ coordinate functionals in the dual space of $L^1[0,1]$,
which is isomorphic to $L^\infty[0,1]$.
Denote the corresponding functions in $L^\infty[0,1]$ by $w_1, \ldots, w_m$,
and define $\mathbf{w}(x) := (w_1(x), \ldots, w_m(x))$.
This $\mathbf{w}$ is essentially bounded and
$\Lambda f$ is given by the previous integral.
We call $\mathbf{w}$ the \emph{responsivity} and  $w_1, \ldots, w_m$ the \emph{responsivities} of $\Lambda_\mathbf{w}$.
The right side $y$ of \ref{eq:operator} is the \emph{response}.
 
In this article $\Lambda$ is always taken as surjective (otherwise one can simply choose an isomorphism with the actual range).
It is easily shown (see \ref{thm:Gram}) that surjectivity is equivalent to the linear independence of $w_1, \ldots, w_m$,
and so we always assume this linear independence.
For a given $\tau \in \mathbb{R}^m$ note that the linear combination
$\tau_1 w_1 + \ldots + \tau_m w_m$ can be expressed more compactly as $\langle \tau , \mathbf{w} \rangle$.
We move freely back and forth between these notations, and sometimes denote it by $w_\tau$.
In this article we compute, with appropriate definitions and under certain conditions on $\Lambda$, 
a \emph{centroid} of $\Lambda^{-1}(y) \cap Q^\infty$, for suitable $y \in \mathbb{R}^m$,
and show that it is a solution of \ref{eq:operator}.
The main result is

\begin{maintheorem} 
Let $\mathbf{w} : [0,1] \to \mathbb{R}^m$ be a step function
with $w_1, \ldots, w_m$ linearly independent and 
$\Lambda_\mathbf{w} : L^1[0,1] \twoheadrightarrow \mathbb{R}^m$ the induced surjective linear operator.
If $y_0 \in \operatorname{int}(\Lambda_\mathbf{w}(Q^\infty))$ 
then for the  profile-gauge directed filtration $\mathapprox{P}$ of $L^1[0,1]$
\begin{equation*} 
\operatorname{centroid}_{\mathapprox{P}} \left( \Lambda_\mathbf{w}^{-1}(y_0) \cap Q^\infty \right) ~~=~~ \sigma( \langle \tau_0, \mathbf{w} \rangle)
\end{equation*}
where $\tau_0 \in \mathbb{R}^m$ is the unique solution to the saddlepoint equation
\begin{equation*} 
\int_0^1 \sigma( \langle \tau, \mathbf{w}(x) \rangle ) \mathbf{w}(x) \,dx  =  y_0
\end{equation*}
and where  $\sigma(t) := (\coth(t/2) - 2/t + 1)/2$.
\end{maintheorem}

The ``squashing function" $\sigma(t)$ is an analytic diffeomorphism from $\mathbb{R}$ to (0,1),
and so this centroid is as regular as the $m$ responsivities $w_1, \ldots, w_m$.
$\sigma(t)$ can be written as $(L(t/2) + 1)/2$, where $L(x) := \coth(x) - 1/x$ is the \emph{Langevin function}, see \cite{wiki:Langevin}.
Both functions have a removable singularity at 0.
The above centroid is a ``squashed" linear combination of these responsivities.
For a plot of $\sigma(t)$ see Figure \ref{fig:squashing}.

Recall that $\mathbf{w}$ is a \emph{step function} iff there is a finite partition of $[0,1]$ into subintervals
so that $\mathbf{w}$ is constant on each subinterval.
The saddlepoint equation has a unique solution by Theorem \ref{thm:diffeo}.
The profile-gauge directed filtration $\mathapprox{P}$
and $\operatorname{centroid}_{\mathapprox{P}}$ 
are defined later.

The rest of this paper is organized as follows.
In the next two Sections we discuss the centroid in finite dimensions,
and give the definition of a centroid in infinite dimensions with respect to a specific \emph{directed filtration} of $L^1[0,1]$
by finite-dimensional subspaces.
Next we describe the image $\Lambda(Q^\infty) \subset \mathbb{R}^m$,
and show that the saddlepoint equation appearing in the main result has a unique solution.
Next we bring in the directed filtration $\mathapprox{P}$ and reduce our problem to the calculation of
the centroid of a section of the $n$-dimensional cube $Q^n$.
The next Section applies the classical (bilateral) Laplace Transform to the volume of these sections of $Q^n$.
Thanks to the very special shape of $Q^n$ it is possible to find integral expressions
for both numerator and denominator of \ref{eq:centroid}.
The next Section states, but does not prove, a slightly generalized form of the classical Laplace Approximation,
to be used as $n \to \infty$.
We next prove important properties of the complex functions $P(s)$ and $K(s)$ that arise from the Laplace Transform.
The next Section brings it all together and gives the lengthy proof of the main result;
this proof uses the classical Laplace Transform complex inversion formula and the saddlepoint method.
To prepare for an application, the next Section explores reparameterization from $[0,1]$ to arbitrary $[a,b]$.

Next is a numerical application, with some plots and figures, in \ref{sec:numerical}.
The application is \emph{colorimetry}, where a function $f \in Q^\infty$ corresponds to
the spectral reflectance (or transmittance) function of a material.
For a reflectance to be physically feasible, it must be between 0 and 1.
There is then a brief treatment of the unbounded case
(including application to colorimetry),
where $Q^\infty$ is replaced by the \emph{non-negative orthant} in $L^1[0,1]$.

This is followed by discussion of open problems, and a proof of the Laplace Approximation.

In the rest of this introduction are some definitions and notations.

For $f$ a measurable function on $[0,1]$ denote the \emph{essential image} of $f$ by $\operatorname{ess.im}(f)$,
and the \emph{essential support} of $f$ by $\operatorname{supp}(f)$.
For a $C^2$ function $f : \mathbb{R}^m \to \mathbb{R}$ denote the Hessian matrix of $f$ at $x_0$ by $f''(x_0)$.
For a complex number $s$, we usually write $s = \tau + i\upsilon$ as the expansion into complex and imaginary parts,
and similarly for a complex vector $s \in \mathbb{C}^m$.

For $f \in L^1[0,1]$ and $g \in L^\infty[0,1]$, 
${\langle f, g \rangle}_{1,\infty}$ := $\int_0^1 f(x)g(x) \,dx$,
and ${\langle g, f \rangle}_{\infty,1}$ := ${\langle f, g \rangle}_{1,\infty}$.
The notation $\langle \cdot,\cdot \rangle$ means the standard inner product in a Hilbert space, 
either finite or infinite as indicated by context.
For a vector $\upsilon \in \mathbb{R}^m$,
$\abs{\upsilon} := \sqrt{\langle \upsilon,\upsilon \rangle}$ denotes the $L^2$ norm.

For a subset $A \subset X$ denote the boundary of $A$ by $\partial A$,
the closure of $A$ by $\operatorname{cl}(A)$,
and the interior of $A$ by $\operatorname{int}(A)$ or sometimes by $\mathring{A}$.
Denote the indicator function of $A$ by $\mathbf{1}_A$.
For vectors $x$ and $y$, $[x,y]$ denotes the closed segment joining them,
and $[x,y)$ denotes the half-open segment.
$I_m$ is the $m{\times}m$ identity matrix.
$B_{\delta}$ is the ball in $\mathbb{R}^m$ centered at $0$ with radius $\delta$.

Define the infinite-dimensional unit cube $Q^\infty := \{ f ~|~ \operatorname{ess.im}(f) \subseteq [0,1] \}$.
Obviously $Q^\infty \subset L^\infty$ and we give it the $L^\infty$ topology, where $Q^\infty$ is closed.
$\operatorname{int}(Q^\infty)$ means the interior considering $Q^\infty \subset L^\infty[0,1]$, 
and \emph{not} $Q^\infty$ as a subset of the two larger function spaces, where its interior is empty.
The interior $\operatorname{int}(Q^\infty) = \{ f ~|~ \operatorname{ess.im}(f) \subseteq (0,1) \}$.
The boundary $\partial Q^\infty = \{ f \in Q^\infty ~|~ 0 \in \operatorname{ess.im}(f) \text{ or }  1 \in \operatorname{ess.im}(f) \}$.
The set of ``vertices'' of $Q^\infty$ is the set $\{ f ~|~ \operatorname{ess.im}(f) \subseteq \{0,1\} \}$;
so the ``vertices'' are the indicator functions $\mathbf{1}_A$ where $A \subseteq [0,1]$ and $A$ is measurable.
A linear functional on $Q^n$ takes its maximum at a vertex, and the same is true for $Q^\infty$.
The mapping $f \mapsto 1-f$ is the \emph{standard involution} of $Q^\infty$ which has a unique fixed point
$f_c(x) \equiv 1/2$.
$Q^\infty$ is clearly symmetric about $f_c$;
later we show in Section \ref{sec:nets}, with an appropriate definition, 
that $f_c$ is the centroid of $Q^\infty$.
If $Z := \Lambda(Q^\infty)$, the standard involution of $Q^\infty$ pushes forward to an involution of $Z$,
with unique fixed point $\Lambda(f_c)$.

Let $y \in \Lambda( \operatorname{int}(Q^\infty) )$.
Then $\Lambda^{-1}(y)$ is an affine subspace with codimension $m$.
The cube section $\Lambda^{-1}(y) \cap  Q^\infty$ is non-empty and we want to investigate
whether its centroid can be defined in a reasonable way. 
The inclusion and mapping situation is
\[
Q^\infty \cap \Lambda^{-1}(y) ~~\subset~~ Q^\infty ~~ \subset ~~  L^\infty[0,1] ~~ \subset L^2[0,1] ~~ \subset L^1[0,1]  ~~  {\overset{\Lambda}\twoheadrightarrow}  ~~ \mathbb{R}^m
\]
The inclusions and the linear map $\Lambda$ are continuous.
The cube $Q^\infty$ and the cube section $Q^\infty \cap \Lambda^{-1}(y)$ are closed and bounded in all three topologies.

\section{Centroid in Finite Dimensions}\label{sec:centroid}

Let $A \subset \mathbb{R}^k$ be bounded with \emph{non-empty interior}. 
Define
\begin{equation}\label{eq:centroid}
\centroid(A) :=  \frac{\int_A x \, dx}{\int_A 1 \, dx} = \frac{\int_A I \, d \mu}{\mu(A)}
\end{equation}
where $I$ is the identity function on $\mathbb{R}^k$, and $\mu$ is standard Lebesgue measure.
Since $A$ is bounded and has an interior, $\mu(A)$ is finite and positive.
The numerator of \ref{eq:centroid} is called the \emph{moment} of $A$.
Because isomorphisms of $\mathbb{R}^k$ simply scale Lebesgue measure (by the determinant of the associated matrix),
and since $\mu$ is translation invariant, it follows that
\begin{prop}\label{prop:finite}
If $A \subset \mathbb{R}^k$ has non-empty interior, then
\begin{enumerate}[label=(\alph*)]
\item $L( \centroid (A) ) = \centroid( L(A) ) ~ $where $L : \mathbb{R}^k \to \mathbb{R}^k$ is a linear isomorphism
\item $\centroid(A+x) = \centroid(A)+x ~~~$ where  $x \in \mathbb{R}^k$
\end{enumerate}
\end{prop}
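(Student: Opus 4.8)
The plan is to reduce both statements to the change-of-variables formula for Lebesgue integrals together with the elementary fact that a linear map commutes with the integration of a vector-valued function. First I would record the standing observation that both $L(A)$ and $A+x$ again have non-empty interior — $L$ is a homeomorphism of $\mathbb{R}^k$ and translation is a homeomorphism — so that $\centroid$ is defined on them and the quotient in \ref{eq:centroid} makes sense ($\mu$ of each is finite and positive).

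For part (a), I would apply the substitution $y = L(x)$ to the numerator and the denominator of $\centroid(L(A))$ separately. The Jacobian of $L$ is the constant $\abs{\det L}\neq 0$, so $\mu(L(A)) = \abs{\det L}\,\mu(A)$. For the moment, $\int_{L(A)} y\,dy = \int_A L(x)\,\abs{\det L}\,dx = \abs{\det L}\, L\!\left(\int_A x\,dx\right)$, where pulling $L$ outside the integral is justified coordinatewise: each coordinate of $L(x)$ is a linear functional of $x$, and linear functionals pass through the integral. Forming the quotient, the factor $\abs{\det L}$ cancels and linearity of $L$ gives $\centroid(L(A)) = L(\centroid(A))$.

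Part (b) is the same computation with $L$ replaced by the translation $x \mapsto x + x_0$: translation invariance of $\mu$ gives $\mu(A+x_0) = \mu(A)$, while $\int_{A+x_0} y\,dy = \int_A (x+x_0)\,dx = \int_A x\,dx + x_0\,\mu(A)$; dividing by $\mu(A)$ yields $\centroid(A+x_0) = \centroid(A) + x_0$.

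There is no genuine obstacle here. The only points deserving a word of care are verifying that the transformed sets still satisfy the hypothesis (non-empty interior, hence positive finite volume), so that \ref{eq:centroid} applies to them, and being explicit that it is precisely the interchange of a linear map with the moment integral — checked componentwise via linear functionals — that makes the argument work; everything else is the standard change of variables.
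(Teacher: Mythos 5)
Your proposal is correct and follows essentially the same route as the paper, which simply invokes the facts that a linear isomorphism scales Lebesgue measure by $\abs{\det L}$ and that $\mu$ is translation invariant; you have merely written out the routine change-of-variables computation (with the determinant cancelling in the quotient) that the paper leaves implicit. The extra care about the transformed sets still having positive finite measure is fine but not a point of divergence.
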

$A$ is called \emph{symmetric} iff $-A = A$.
In this case, because of cancelation, the  moment vanishes and so does the centroid, i.e. $\centroid(A)=0$.
$A$ is called \emph{symmetric about x} iff $-A = A - 2x$, which implies $\centroid(A)=x$.

Assume now that $A$ is closed,
and let $H_1^+ := \{ (x_1,\ldots,x_k) ~|~ x_1 \ge 0 \}$ be the closed halfspace.
If $A \subset H_1^+$ then the $x_1$ coordinate of the moment is clearly positive, 
and therefore $\centroid(A) \subset \operatorname{int}(H_1^+)$.
By rotation and translation, the same is true with $H_1^+$ replaced by \emph{any} halfspace.
Since a closed convex set $A$ is the intersection of all the halfspaces that contain it (\cite{leonard2015} p. 194),
it follows that $\centroid(A) \in A$.
But if $\centroid(A) \in \partial A$ then there is a supporting halfspace $H$ at $\centroid(A)$
and by above argument $\centroid(A) \in \operatorname{int}(H)$ which is impossible.
Therefore $\centroid(A) \in \operatorname{int}(A)$.

There is a probabilistic interpretation.
If $X$ is a random variable uniformly distributed in $A$, then $\centroid(A)$ is the expected value of $X$.
So if the only information one has about a point $x$ is that $x \in A$,
then $\centroid(A)$ is a ``good" estimate for $x$.

Now let $F$ be an $k$-dimensional affine subspace (or \emph{flat}) of some larger space, and let $A \subset F$.
Pick an affine isomorphism $\lambda : \mathbb{R}^k \leftrightarrow F$ and define
$\centroid(A) := \lambda( \centroid( \lambda^{-1}(A) ) )$.
Alternatively, one can define $\centroid(A)$ using \ref{eq:centroid}
with $\mu$ replaced by the pushforward measure $\lambda_*(\mu)$.
In either case, by \ref{prop:finite}, $\centroid(A)$ does not depend on the $\lambda$ selected.

The our case, $A = F \cap Q^n$ where $Q^n := [0,1]^n$ is the unit $n$-cube,
and $F \subset \mathbb{R}^n$ intersects the interior of $Q^n$.
$A$ is an $k$-dimensional polyhedron.
Because of the very special shape of $Q^n$ it is possible to find integral expressions
for both numerator and denominator of \ref{eq:centroid}, see Section \ref{sec:Volumes}.

Generalizing the centroid \ref{eq:centroid} to an infinite-dimensional real Banach space $E$
presents an immediate problem: 
there is no non-trival locally finite translation-invariant measure on $E$, see \cite{wiki:IDLM}.
Instead, we use a method suggested by Nik Weaver in \cite{Weaver2012} that uses a directed set of finite-dimensional subspaces of $E$, 
see Section \ref{sec:nets}.

\section{Centroid from a Net}\label{sec:nets}

In this section we define the nets used to define centroids of infinite-dimensional subsets of Banach spaces.
The following definition was inspired by Nik Weaver in \cite{Weaver2012}.

\begin{defn}\label{defn:directedfiltration}
For a Banach space $E$, a \emph{directed filtration} $\mathapprox{D}$ of $E$ is a collection of finite-dimensional subspaces of $E$
which form a directed set when ordered by inclusion,
and whose union is dense in $E$.
\end{defn}
The directed set condition means that if subspaces $V$ and $V'$ are in $\mathapprox{D}$
then there is a bigger subspace in $\mathapprox{D}$ that contains them both.
The bigger one is not necessarily the sum $V + V'$.
It is not necessary for a directed filtration to be closed under sums of subspaces.

For our Banach space $L^1[0,1]$ we only consider three directed filtrations:
\begin{itemize}[label=\tiny\textbullet]
\item{the collection of \emph{all} finite-dimensional subspaces, denoted by $\mathapprox{F}$ for `full'.}
\item{
the collection of all $V_n$ for $n \in \mathbb{N}$ 
where $V_n$ has the basis $\{\mathbf{1}_{[(j-1)/n,j/n]}, ~ j=1 \ldots n \}$.
Equivalently, $V_n$ is the space of all step functions whose jumps are at multiples of $1/n$.
We have $V_n \subseteq V_m$ iff $n$ divides $m$.
This directed filtration is denoted by $\mathapprox{P}$, for the ``profile-gauge" (a common woodworking tool).
}
\item{a filtration derived from $\mathapprox{P}$ from a composition operator, see equation \ref{eqn:repapp}}
\end{itemize}

\emph{Remark}.
The closest concept we could find in the literature is in \cite{Browder1966},
where a Banach space $E$ has \emph{property $(\pi)_1$} iff it has some directed filtration 
\emph{and} for each $V_\alpha$ in the filtration, there is a projection $P_\alpha : E \to V_\alpha$ with norm 1.
Property $(\pi)_1$ is repeated in \cite{cioranescu1990} where our filtration $\mathapprox{P}$ is given on page 83.
Regarding the projections $P_\alpha$, we define and use these projections in section \ref{sec:reduction},
but they do not seem necessary for defining a centroid, so they are not part of our definition.

Let $\mu_V$ denote Lebesgue measure on $V \in \mathapprox{D}$.
Suppose a bounded infinite-dimensional subset $A \subset E$ is given and satisfies this compatibility requirement with $\mathapprox{D}$:
\begin{equation}\label{req:AA}
\text{if } W \in \mathapprox{D} \text{ then there is a } V \in \mathapprox{D} \text{ with } W \subseteq V \text{ and }  \mu_V(V \cap A) > 0
\end{equation}
Define a mapping from $\mathapprox{D}$ to $E$ by $V \mapsto \centroid(V \cap A)$, and obtain a \emph{net}.
If necessary pass to the cofinal subset of $\mathapprox{D}$
consisting of all $V{\in}\mathapprox{D}$ with $\mu_V(V \cap A)>0$, so $\centroid(V \cap A)$ is defined.
For background on nets and their convergence see \cite{hu1964}, Section III.3.

\begin{defn}\label{defn:centroid}
If the net $V \mapsto \centroid(V{\cap}A)$ for $V{\in}\mathapprox{D}$ converges in $L^1[0,1]$, then define
\begin{equation*}
\centroid_\mathapprox{D}(A) := \lim_{V \in \mathapprox{D}} \centroid(V \cap A)
\end{equation*}
\end{defn}
If the net converges we call this the \emph{centroid of $A$ with respect to $\mathapprox{D}$ }.
Otherwise this centroid is undefined, see \cite{Weaver2012} for an example.

The next proposition follows directly from \ref{prop:finite} and the definitions
\begin{prop}\label{prop:infinite}
Let $E$ be an infinite-dimensional Banach space with directed filtration $\mathapprox{D}$, and let $F$ be isomorphic to $E$.
Let $A \subset E$ with well-defined $\centroid_\mathapprox{D}(A)$.  
Then
\begin{enumerate}
\item if $L : E \to F$ is a linear isomorphism then $L(\mathapprox{D})$ is a directed filtration of $F$, and $L( \centroid_\mathapprox{D} (A) ) = \centroid_{L(\mathapprox{D})}( L(A) )$
\item if $L : E \to E$ is a linear isomorphism that leaves $\mathapprox{D}$ invariant, then \\$L( \centroid_\mathapprox{D} (A) ) = \centroid_\mathapprox{D}( L(A) ) ~~~$
\item if $x \in \bigcup_{V \in \mathapprox{D}}V$, then $~\centroid_\mathapprox{D}(A+x) = \centroid_\mathapprox{D}(A)+x ~~~$
\end{enumerate}
\end{prop}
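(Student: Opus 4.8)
The plan is to reduce all three parts to Proposition~\ref{prop:finite}, applied inside the individual finite-dimensional subspaces of the filtration, together with the elementary fact that a continuous map (or a fixed-vector translation) sends a convergent net to the net of images, which converges to the image of the limit. For part~(1) I would first check that $L(\mathapprox{D})$ is a directed filtration of $F$ in the sense of Definition~\ref{defn:directedfiltration}: since $L$ is a linear bijection it sends each finite-dimensional $V\in\mathapprox{D}$ to a subspace $L(V)\subseteq F$ of the same dimension and preserves inclusions, so $L(\mathapprox{D})$ inherits the directed-set structure, and density of $\bigcup_V L(V)$ in $F$ follows from density of $\bigcup_V V$ in $E$ using only that $L$ is continuous and onto, via $F=L(E)=L\bigl(\operatorname{cl}\bigcup_V V\bigr)\subseteq\operatorname{cl}\bigl(\bigcup_V L(V)\bigr)$. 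Next, for each $V$ the bijectivity of $L$ gives the set identity $L(V)\cap L(A)=L(V\cap A)$, and $L|_V\colon V\to L(V)$ is a linear isomorphism of finite-dimensional flats, so Proposition~\ref{prop:finite}(a) in the flat form discussed right after its statement yields both the equivalence $\mu_{L(V)}(L(V)\cap L(A))>0\iff\mu_V(V\cap A)>0$ and the identity $\centroid(L(V)\cap L(A))=L\bigl(\centroid(V\cap A)\bigr)$ whenever these are defined. The first shows $L(A)$ satisfies the compatibility requirement~\ref{req:AA} with $L(\mathapprox{D})$; the second shows the defining net for $\centroid_{L(\mathapprox{D})}(L(A))$ is exactly the image under $L$ of the defining net for $\centroid_\mathapprox{D}(A)$, reindexed along the order isomorphism $V\mapsto L(V)$. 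As $L$ is continuous, pushing the convergent net forward gives $\centroid_{L(\mathapprox{D})}(L(A))=L\bigl(\centroid_\mathapprox{D}(A)\bigr)$.

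Part~(2) is then the special case $F=E$ of part~(1) in which the hypothesis $L(\mathapprox{D})=\mathapprox{D}$ makes $\centroid_{L(\mathapprox{D})}$ literally the operator $\centroid_\mathapprox{D}$. For part~(3), fix $V_0\in\mathapprox{D}$ with $x\in V_0$; the subcollection $\{V\in\mathapprox{D}: V\supseteq V_0\}$ is cofinal in $\mathapprox{D}$, so the centroid net may be evaluated over it. For such $V$ we have $x\in V$, hence the set identity $(A+x)\cap V=(A\cap V)+x$, and since translation by a fixed vector preserves $\mu_V$, the compatibility requirement~\ref{req:AA} passes from $A$ to $A+x$ (given $W\in\mathapprox{D}$, first enlarge to some $W'\in\mathapprox{D}$ with $W'\supseteq W$ and $W'\supseteq V_0$, then apply~\ref{req:AA} for $A$). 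Proposition~\ref{prop:finite}(b) gives $\centroid((A+x)\cap V)=\centroid(A\cap V)+x$ for every $V\supseteq V_0$, so the defining net for $A+x$ is the defining net for $A$ translated by the fixed vector $x$; continuity of $y\mapsto y+x$ then gives $\centroid_\mathapprox{D}(A+x)=\centroid_\mathapprox{D}(A)+x$.

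The only points that require any care are the two set identities $L(V\cap A)=L(V)\cap L(A)$ and $(A+x)\cap V=(A\cap V)+x$, and the bookkeeping with the finite-dimensional Lebesgue measures $\mu_V$, which are each only defined up to a positive scalar; that both the condition ``$\mu_V(V\cap A)>0$'' and the point $\centroid(V\cap A)$ are independent of the chosen normalization, and transform correctly under $L$ and under translation, is precisely Proposition~\ref{prop:finite} in its flat form. I do not anticipate a genuine obstacle: once the finite-dimensional statement holds at every index of the net, the infinite-dimensional statement is just the assertion that a continuous map, or a translation, commutes with taking the limit of a net.
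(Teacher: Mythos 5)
Your proposal is correct and follows essentially the same route as the paper, which offers no written proof beyond the remark that the proposition ``follows directly from \ref{prop:finite} and the definitions''; your argument is precisely that reduction, with the set identities $L(V\cap A)=L(V)\cap L(A)$ and $(A+x)\cap V=(A\cap V)+x$ and the requirement \ref{req:AA} checked explicitly. The only point worth noting is that $\{V\in\mathapprox{D}: V\supseteq V_0\}$ is not merely cofinal but upward-closed, which is what legitimately lets you compute the limit of the full net over that subcollection.
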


Consider any $E$ and its full filtration $\mathapprox{F}$.
Suppose $A \subset E$ and $A$ is bounded and symmetric (i.e. $-A = A$);
then $A \cap V$ is symmetric for any $V \in \mathapprox{F}$ and so
$\centroid(A \cap V) = 0$ and the net of centroids converges trivially to $0$.
Therefore by part $(c)$ of the previous proposition, $\centroid_\mathapprox{F}(A+x) = x$ for any $x$.
It follows that the centroid of any ball is at its center (with respect to $\mathapprox{F}$).
Consider the centroid of the convex hull of two balls.
A short calculation shows that it is the center of the larger ball when the radii are unequal,
and is the midpoint of the segment joining their centers when the radii are equal.
Thus the centroid is not a continuous function of the radii - not a very satisfactory situation.
As another example of bad behavior, similar to the half-ball example in \cite{Weaver2012},
let $B$ be the unit ball in $L^2[0,1]$ and $u \in \partial B$.
Then $\centroid_\mathapprox{F}(B \cap u^\perp) = 0$ since the set is symmetric.
Let $C := \operatorname{cxhull} (u \cup (B \cap u^\perp) )$, which is a cone over $B \cap u^\perp$.
Then it can be shown that $\centroid_\mathapprox{F}(C){=}0{\in}\partial C$.
In finite dimensions, the centroid must be in the \emph{interior} of a convex body,
but that is false here.
Adding the cone over $B \cap u^\perp$ does not shift its centroid.

Consider a homeomorphism $\varphi$ of [0,1].
The composition operator $C_\varphi(f) := f \circ \varphi$ is an isomorphism of $L^1[0,1]$.
We want a description of the subspaces in the directed filtration $C_\varphi(\mathapprox{P})$.
If $V_n$ is a subspace of $\mathapprox{P}$, then it is straightforward to show that
$C_\varphi(V_n)$ is the subspace of all step functions with jumps at $\varphi^{-1}(j/n)$, for $j=1,...,(n-1)$.

Consider the main subject of this paper: $A := Q^\infty \cap \Lambda_\mathbf{w}^{-1}(y) \subset L^1[0,1]$.
For the directed filtration $\mathapprox{P}$ 
and when $\mathbf{w}$ is a step function,
we will see in Section \ref{sec:mainresult} that $\centroid_\mathapprox{P}(A)$ is defined and can be calculated.
Requirement \ref{req:AA} is shown in Theorem \ref{thm:union}.
We will also give in Section \ref{sec:numerical}  an example of two directed filtrations of $L^1[0,1]$ 
that yield two different centroids of $A$.

\section{Convexity}\label{sec:convexity}

In this section we collect a few technical facts
about convex sets in Banach spaces that will be used later for $Q^\infty$ and $Q^n$.

\begin{theorem}\label{thm:segment}
\emph{[line segment principle]}
Let $A$ be convex set in a Banach space $X$.
Let $a_0 \in \operatorname{int}(A)$ and $x_1 \in \operatorname{cl}(A)$.
Then all points in the segment $[a_0,x_1)$ are interior points of $A$.
\end{theorem}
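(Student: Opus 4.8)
The plan is to exhibit, around each point $x_\theta := (1-\theta)a_0 + \theta x_1$ with $\theta \in [0,1)$, an explicit open ball contained in $A$, built from the ball witnessing that $a_0$ is interior together with a sufficiently good approximant of $x_1$ from inside $A$. Throughout, write $B(c,r)$ for the open ball of radius $r$ about $c$ in $X$.

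First I would use $a_0 \in \operatorname{int}(A)$ to fix $\delta > 0$ with $B(a_0,\delta) \subseteq A$, and fix $\theta \in [0,1)$; the case $\theta = 0$ is just the hypothesis on $a_0$, so assume $\theta \in (0,1)$. Since $x_1 \in \operatorname{cl}(A)$, for any $\epsilon > 0$ there is $x_1' \in A$ with $\|x_1' - x_1\| < \epsilon$. Now I would invoke convexity: for every $b$ with $\|b - a_0\| < \delta$ the point $(1-\theta)b + \theta x_1'$ lies in $A$, being a convex combination of $b \in A$ and $x_1' \in A$. As $b$ ranges over $B(a_0,\delta)$, the point $(1-\theta)b + \theta x_1'$ ranges over exactly the open ball of radius $(1-\theta)\delta$ centered at $c := (1-\theta)a_0 + \theta x_1'$; hence $B\bigl(c,\,(1-\theta)\delta\bigr) \subseteq A$.

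It then remains to compare $c$ with $x_\theta$: one has $\|c - x_\theta\| = \theta\,\|x_1' - x_1\| < \theta\epsilon$. Choosing $\epsilon$ small enough that $\theta\epsilon < (1-\theta)\delta$ (possible since $\theta < 1$ makes $(1-\theta)\delta > 0$), the triangle inequality gives $B\bigl(x_\theta,\,(1-\theta)\delta - \theta\epsilon\bigr) \subseteq B\bigl(c,\,(1-\theta)\delta\bigr) \subseteq A$, so $x_\theta \in \operatorname{int}(A)$.

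No serious obstacle is expected: the argument is purely a bookkeeping of radii under scaling and translation, using only the vector-space operations and the norm, so completeness of $X$ plays no role and the infinite-dimensional setting adds nothing. The one point to keep straight is that the factor $(1-\theta)$ shrinks the transported ball, so the admissible error $\epsilon$ must be chosen only after $\theta$ is fixed; the resulting radius stays positive precisely because $x_\theta$ lies on the half-open segment $[a_0,x_1)$ rather than at the endpoint $x_1$.
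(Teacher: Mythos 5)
Your argument is correct and complete: the affine map $b \mapsto (1-\theta)b + \theta x_1'$ does carry $B(a_0,\delta)$ onto $B\bigl(c,(1-\theta)\delta\bigr)$, convexity puts that image inside $A$, and your choice of $\epsilon$ after fixing $\theta$ (so that $\theta\epsilon < (1-\theta)\delta$) makes the triangle-inequality step airtight. The paper itself does not prove Theorem \ref{thm:segment}; it simply cites Rockafellar--Wets, p.~58, where the standard argument is given in the finite-dimensional setting of $\mathbb{R}^n$. Your proof is essentially that classical argument, but written out directly in an arbitrary normed space, which in fact matches the Banach-space statement of the theorem better than the citation does: as you observe, only the vector-space operations and the norm are used, so completeness (and finite dimension) play no role. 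The one stylistic difference is that the textbook versions often phrase the inclusion as $(1-\theta)B(a_0,\delta) + \theta x_1' \subseteq A$ and then absorb the perturbation $x_1' \to x_1$ into the radius, exactly the bookkeeping you carry out explicitly; making the dependence of $\epsilon$ on $\theta$ explicit, as you do, is the only point where a careless write-up could go wrong, and you have it right.
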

\begin{proof}
See \cite{rockafellar2009variational}, p 58.
\end{proof}

\begin{theorem}\label{thm:interior}
Let $T : X \to Y$ be a linear, continuous, and surjective map of Banach spaces $X$ and $Y$.
Let $A \subset X$ be a convex set with non-empty interior.
Then $T(A)$ is convex with non-empty interior, and
$\operatorname{int}(T(A)) = T(\operatorname{int}(A))$.
\end{theorem}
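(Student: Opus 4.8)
The plan is to derive everything from two ingredients: the Open Mapping Theorem (which applies since $T$ is continuous, linear, and surjective between Banach spaces) and the line segment principle, Theorem~\ref{thm:segment}.

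First, convexity of $T(A)$ is immediate from linearity of $T$. For the interior, the Open Mapping Theorem says that $T$ carries open sets to open sets; hence $T(\operatorname{int}(A))$ is open in $Y$, and it is non-empty because $\operatorname{int}(A) \neq \emptyset$. Since $T(\operatorname{int}(A)) \subseteq T(A)$, this already shows $\operatorname{int}(T(A)) \neq \emptyset$ and, more precisely, gives the inclusion $T(\operatorname{int}(A)) \subseteq \operatorname{int}(T(A))$, as the left-hand side is an open subset of $T(A)$.

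It remains to prove $\operatorname{int}(T(A)) \subseteq T(\operatorname{int}(A))$. Fix once and for all a point $a_1 \in \operatorname{int}(A)$ and set $y_1 := T(a_1)$. Let $y_0 \in \operatorname{int}(T(A))$; if $y_0 = y_1$ we are done, so assume $y_0 \neq y_1$. Since $y_0$ is interior to $T(A)$, for all sufficiently small $s > 0$ the point $y_2 := y_0 + s(y_0 - y_1)$ still lies in $T(A)$; choose $a_2 \in A$ with $T(a_2) = y_2$. A direct computation gives
\[
\frac{1}{1+s}\, y_2 + \frac{s}{1+s}\, y_1 = y_0 ,
\]
so that $y_0 = T\!\left( \frac{1}{1+s}\, a_2 + \frac{s}{1+s}\, a_1 \right)$. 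The argument $\frac{1}{1+s} a_2 + \frac{s}{1+s} a_1$ is a point of the half-open segment $[a_1, a_2)$ with $a_1 \in \operatorname{int}(A)$ and $a_2 \in A \subseteq \operatorname{cl}(A)$; by Theorem~\ref{thm:segment} it lies in $\operatorname{int}(A)$. Hence $y_0 \in T(\operatorname{int}(A))$, which completes the proof.

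There is no genuine obstacle here beyond invoking the Open Mapping Theorem correctly for the openness of images of open sets, and then using the line segment principle to pull a ``boundary-like'' preimage point into the interior; the only points requiring a moment's care are the degenerate case $y_0 = y_1$ and the elementary observation that an interior point of $T(A)$ can always be realized as such an ``overshoot'' convex combination of two points of $T(A)$, one of which is the image of a chosen interior point of $A$.
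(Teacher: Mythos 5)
Your proof is correct and follows essentially the same route as the paper's: the Open Mapping Theorem gives $T(\operatorname{int}(A)) \subseteq \operatorname{int}(T(A))$, and for the reverse inclusion you extend the segment from the image of a fixed interior point slightly past the target point, pull back the overshoot point, and apply the line segment principle (Theorem \ref{thm:segment}). The only cosmetic difference is that you write out the convex-combination coefficients explicitly, whereas the paper argues via the image of the half-open segment $[a_0,a_1)$.
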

\begin{proof}
That $T(A)$ is convex is very easy.
By the open mapping theorem (\cite{ConwayFA} page 93), 
$T( \operatorname{int}(A) )$ is open, and it is certainly a subset of $T(A)$, therefore 
$T( \operatorname{int}(A) ) \subseteq \operatorname{int}(T(A))$.
Conversely, suppose $y \in \operatorname{int}(T(A))$; we must show $y=T(a)$ for some $a \in \operatorname{int}(A)$.
Let $a_0 \in \operatorname{int}(A)$;
then again by the open mapping theorem, $T(a_0)$ is an interior point of $T(A)$.
If $T(a_0)=y$ we are done.
Otherwise consider the non-trivial segment $[T(a_0),y]$.
Since $y$ is an interior point the segment can be extended slightly to $[T(a_0),y_1]$
where $y_1 \in T(A)$, but $y_1 \ne y$.
Pick $a_1 \in A$ so $T(a_1)=y_1$; trivially $a_1 \in \operatorname{cl}(A)$.
Apply \ref{thm:segment} to $a_0$ and $a_1$ to conclude
that every point in $[a_0,a_1)$ is an interior point of $A$.
Since $T$ maps the segment $[a_0,a_1)$ to $[T(a_0),y_1)$,
and since the given $y \in [T(a_0),y_1)$,
one of those interior points in $[a_0,a_1)$ maps to $y$.
\end{proof}

Note that in the above we have made no assumptions about whether points in $\partial A$ belong to $A$ or not.
$A$ might be open or closed or neither.
The next fact adds the assumptions that the convex set is closed and bounded, 
but drops the assumption that the set has a non-empty interior.

\begin{theorem}\label{thm:compact}
Let $C$ be a closed, bounded, and convex set in a reflexive Banach space $X$ (e.g. a Hilbert space).
Let $T : X \to \mathbb{R}^m$ be a continuous linear transformation.
Then $T(C)$ is compact and convex in $\mathbb{R}^m$.
\end{theorem}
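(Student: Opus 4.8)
The plan is to show that $T(C)$ is bounded and closed in $\mathbb{R}^m$; convexity is immediate since $T$ is linear and $C$ is convex. Boundedness is also immediate: $C$ is bounded in $X$, so $\|x\|_X \le R$ for all $x \in C$ and some $R$, and since $T$ is continuous (hence bounded) we get $\abs{T(x)} \le \|T\|\, R$ for all $x \in C$. Thus $T(C)$ is a bounded convex subset of $\mathbb{R}^m$, and it only remains to prove that it is closed, for then it is compact by Heine--Borel.

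For closedness the plan is to exploit reflexivity via the weak topology. Let $y_k = T(x_k)$ be a sequence in $T(C)$ converging to some $y \in \mathbb{R}^m$; I want to produce $x \in C$ with $T(x) = y$. Since $C$ is closed, bounded, and convex in a reflexive Banach space, $C$ is weakly compact (by the Eberlein--\v{S}mulian theorem together with the fact that bounded sequences in a reflexive space have weakly convergent subsequences, or simply because a closed bounded convex set in a reflexive space is weakly sequentially compact). Hence the sequence $x_k$ has a subsequence $x_{k_j}$ converging weakly to some $x$, and since $C$ is convex and norm-closed it is weakly closed (Mazur), so $x \in C$. Now $T : X \to \mathbb{R}^m$ is linear and norm-continuous, hence weak-to-weak continuous; since the weak topology on the finite-dimensional space $\mathbb{R}^m$ coincides with the norm topology, $T$ is weak-to-norm continuous. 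Therefore $T(x_{k_j}) \to T(x)$ in $\mathbb{R}^m$. But $T(x_{k_j}) = y_{k_j} \to y$, so $T(x) = y \in T(C)$, proving $T(C)$ closed.

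The main obstacle is really just invoking the correct compactness principle at the right strength: one needs that a closed bounded convex subset of a reflexive Banach space is weakly sequentially compact, which is where reflexivity is essential (in a non-reflexive space the unit ball need not be weakly compact, and $T(C)$ can fail to be closed). Everything else — boundedness of $T(C)$, weak-to-norm continuity of $T$ onto finite dimensions, and weak closedness of $C$ via Mazur's lemma — is routine. An alternative that avoids weak topologies: since $T$ has finite-dimensional range, factor out $\ker T$; the quotient $X/\ker T$ is still reflexive and finite-dimensional, the image of $C$ in the quotient is closed and bounded hence compact, and $T$ descends to a linear isomorphism of this quotient onto its image in $\mathbb{R}^m$, which then carries the compact image to the compact set $T(C)$. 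Either route works; I would present the weak-compactness argument as the cleaner one.
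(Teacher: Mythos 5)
Your main argument is correct and is essentially the paper's proof: both rest on the weak compactness of $C$ obtained from reflexivity (you via weak sequential compactness / Eberlein--\v{S}mulian, the paper via Kakutani's theorem that balls are weakly compact), Mazur's theorem for weak closedness of $C$, and the weak-to-weak continuity of $T$ together with the coincidence of weak and norm topologies on $\mathbb{R}^m$; the paper simply pushes weak compactness forward through $T$ rather than checking closed plus bounded. One caveat on your sketched alternative: the assertion that ``the image of $C$ in the quotient $X/\ker T$ is closed'' is not automatic (quotient maps need not be closed, and for non-reflexive $X$ it can fail, e.g.\ the image of the unit ball of $\ell^1$ under a suitable functional is an open interval), so that route would need the same weak-compactness input and is not a genuine shortcut.
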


\begin{proof}\label{proof:compact}
Since $X$ is a Banach space, and $C$ is closed and convex, $C$ is weakly closed;
this is Mazur's Theorem, in \cite{LangReal} page 85.  
Since $X$ is reflexive, closed balls are weakly compact;
this is Kakutani's Theorem, in \cite{ConwayFA} page 135.  
Since $C$ is bounded it is contained in a weakly compact ball, and since $C$ is weakly closed, $C$ is weakly compact. 
Since $T$ is continuous in the strong topologies, it is continuous in the weak topologies; in \cite{ConwayFA} page 171.
So $T(C)$ is weakly compact in  $\mathbb{R}^m$,
and since the weak and strong topologies on  $\mathbb{R}^m$ are the same,  $T(C)$ is compact in  $\mathbb{R}^m$.
That $T(C)$ is convex is very easy.
\end{proof}

\begin{defn}
A \emph{convex body} in $\mathbb{R}^m$ is a compact convex subset with non-empty interior.
\end{defn}

In the next section we show that $\Lambda(Q^\infty)$ is a convex body.

\section{Image of the Cube : $\Lambda(Q^\infty)$}\label{sec:image}

Let $w_j \in L^\infty[0,1], j=1,\ldots,m$.
These $m$ functions are the coordinate functions of an essentially bounded function $\mathbf{w} : [0,1] \to \mathbb{R}^m$,
and they induce a continuous linear function $\Lambda_\mathbf{w}: L^1[0,1] \to \mathbb{R}^m$ given by
\[
\Lambda_\mathbf{w}(f) := \int_0^1 f(x) \mathbf{w}(x) \, dx  ~~=~~ (\langle f,w_1\rangle_{1,\infty}, \ldots , \langle f,w_m\rangle_{1,\infty}) ~~ \text{for} ~ f \in L^1[0,1]
\]
Conversely, 
every continuous linear operator $L^1[0,1] \to \mathbb{R}^m$ is $\Lambda_\mathbf{w}$ for a unique $\mathbf{w}$.
This is because the dual of $L^1[0,1]$ is isomorphic to $L^\infty[0,1]$ by \cite{LangReal} p. 292.

\begin{theorem}\label{thm:Gram}
For functions $w_j$ as above, the following are equivalent.
\begin{enumerate}[label=(\alph*)]
\item the set \{$w_j$\}, $j=1,\ldots,m$ is linearly independent 
\item the Gram matrix $G_{ij} = \langle w_i, w_j \rangle$ is positive-definite
\item the restriction of $\Lambda_\mathbf{w}$ to $L^2[0,1]$ is surjective
\item the restriction of $\Lambda_\mathbf{w}$ to $L^\infty[0,1]$ is surjective
\item $\Lambda_\mathbf{w}$  is surjective
\end{enumerate}
\end{theorem}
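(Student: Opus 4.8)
The plan is to prove the cycle of implications $(b) \Rightarrow (d) \Rightarrow (c) \Rightarrow (e) \Rightarrow (a) \Rightarrow (b)$, which establishes the equivalence of all five statements at once. Three of the five arrows are immediate from the inclusions $L^\infty[0,1] \subset L^2[0,1] \subset L^1[0,1]$ together with elementary linear algebra, so essentially all of the content is concentrated in the single arrow $(b) \Rightarrow (d)$, which is the one I would present in full.

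First I would record the standard fact that the Gram matrix $G$ is always positive semidefinite: for $\tau \in \mathbb{R}^m$ one has $\tau^\top G \tau = \int_0^1 \langle \tau, \mathbf{w}(x) \rangle^2 \, dx \ge 0$, with equality iff $\langle \tau, \mathbf{w} \rangle = 0$ almost everywhere. This gives $(a) \Leftrightarrow (b)$ directly: $G$ fails to be positive-definite iff some nontrivial combination $\langle \tau, \mathbf{w} \rangle$ vanishes a.e., i.e. iff the $w_j$ are linearly dependent. (Linear independence here is unambiguous, being the same notion whether the $w_j$ are regarded in $L^\infty$, $L^2$, or $L^1$.) For $(e) \Rightarrow (a)$ I would argue contrapositively: if the $w_j$ are linearly dependent, say $w_k = \sum_{j \ne k} a_j w_j$, then for every $f \in L^1[0,1]$ the response satisfies $\langle f, w_k \rangle_{1,\infty} = \sum_{j \ne k} a_j \langle f, w_j \rangle_{1,\infty}$, so $\Lambda_\mathbf{w}(L^1[0,1])$ is contained in the proper linear subspace $\{ y \in \mathbb{R}^m : y_k = \sum_{j \ne k} a_j y_j \}$, hence $\Lambda_\mathbf{w}$ is not surjective.

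For $(b) \Rightarrow (d)$ I would exhibit an explicit preimage. Assume $G$ is positive-definite, hence invertible. Given $y \in \mathbb{R}^m$, set $c := G^{-1} y$ and $f := \langle c, \mathbf{w} \rangle = \sum_{i=1}^m c_i w_i$. Then $f$ is a finite linear combination of functions in $L^\infty[0,1]$, so $f \in L^\infty[0,1]$, and its $j$-th response is $\langle f, w_j \rangle = \sum_i c_i \langle w_i, w_j \rangle = (G c)_j = y_j$; thus $\Lambda_\mathbf{w}(f) = y$ and the restriction to $L^\infty$ is surjective. The two remaining arrows $(d) \Rightarrow (c)$ and $(c) \Rightarrow (e)$ then need only the remark that if the restriction of $\Lambda_\mathbf{w}$ to a subspace of $L^1[0,1]$ is surjective onto $\mathbb{R}^m$, so is its restriction to any larger subspace, applied along $L^\infty \subset L^2 \subset L^1$.

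I do not expect a genuine obstacle here: once one restricts attention to the span of $w_1, \ldots, w_m$, the whole problem collapses to finite-dimensional linear algebra, and the only thing to watch is the bookkeeping about which ambient function space is in play. I would nonetheless highlight the formula $f = \langle G^{-1} y, \mathbf{w} \rangle$ from the proof of $(b) \Rightarrow (d)$, since it provides an explicit (and in particular essentially bounded) right inverse of $\Lambda_\mathbf{w}$ that is convenient in later sections.
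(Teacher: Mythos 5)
Your proposal is correct, but its logical skeleton differs from the paper's. The paper treats the equivalence of $(a)$, $(b)$, $(c)$ as a standard fact (citing Natanson, with all the action in $L^2[0,1]$), notes $(d)\Rightarrow(c)\Rightarrow(e)$ as trivial, and then spends its effort on the one remaining arrow $(e)\Rightarrow(d)$: assuming the restriction to $L^\infty$ misses some $y_0$, it uses that $\Lambda_\mathbf{w}(L^\infty)$ is a closed subspace of $\mathbb{R}^m$, picks an $L^1$ preimage $x_0$ of $y_0$, approximates it by $u_n\in L^\infty$ in the $L^1$ norm, and derives a contradiction from $\Lambda_\mathbf{w}(u_n)\to y_0$. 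You instead close the cycle $(b)\Rightarrow(d)\Rightarrow(c)\Rightarrow(e)\Rightarrow(a)\Rightarrow(b)$, so the hard arrow for you is $(b)\Rightarrow(d)$, which you settle constructively with the explicit bounded preimage $f=\langle G^{-1}y,\mathbf{w}\rangle$; the implication $(e)\Rightarrow(d)$ then comes for free via $(e)\Rightarrow(a)\Rightarrow(b)\Rightarrow(d)$, and no density or approximation argument is needed anywhere. Your route is more self-contained (it also reproves the cited $(a)\Leftrightarrow(b)$ equivalence via the identity $\tau^{\top}G\tau=\int_0^1\langle\tau,\mathbf{w}(x)\rangle^2\,dx$) and yields an explicit linear right inverse $y\mapsto\langle G^{-1}y,\mathbf{w}\rangle$ of $\Lambda_\mathbf{w}$, which is a genuinely useful byproduct; the paper's density argument, by contrast, makes transparent exactly where the denseness of $L^\infty$ in $L^1$ and the finite rank of $\Lambda_\mathbf{w}$ (hence closedness of the image subspace) enter, which is the mechanism that would survive in settings where no Gram-matrix formula is available. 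Both proofs are valid; yours is the more elementary and constructive of the two.
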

\begin{proof}
The equivalence of $(a),(b),(c)$ is straightforward, e.g. see \cite{Natanson} vol I  page 193. 
Note that all the action is in the Hilbert space $L^2[0,1]$.
The implications $(d) \Rightarrow (c) \Rightarrow (e)$ are trivial.
And this leaves $(e) \Rightarrow (d)$.
To show this we use the denseness of $L^\infty \subset L^1$.
Suppose $\Lambda_\mathbf{w}$ is surjective, but the restriction is \emph{not},
and let $y_0 \in \mathbb{R}^m$ but not in $\Lambda_\mathbf{w}(L^\infty)$.
$\Lambda_\mathbf{w}(L^\infty)$ is closed in $\mathbb{R}^m$,
so let $U$ be a neighborhood of $y_0$ that does not intersect $\Lambda_\mathbf{w}(L^\infty)$.
Pick $x_0 \in L^1$ so $\Lambda_\mathbf{w}(x_0) = y_0$ and a sequence $u_n \in L^\infty$ with $u_n \to x_0$.
Then $\Lambda_\mathbf{w}(u_n) \to \Lambda_\mathbf{w}(x_0) = y_0$.
For $n$ large enough $\Lambda_\mathbf{w}(u_n) \in U$, and this is a contradiction.
\end{proof}

\medskip
\begin{center}
\underline{From now on we assume that $(a),\ldots,(e)$ in the previous theorem are all true.}
\end{center}
\bigskip

In this section we examine the image $\Lambda_\mathbf{w}(Q^\infty)$.
For simplicity we abbreviate:
\begin{equation*}
\body := \Lambda_\mathbf{w}(Q^\infty)  \hspace{10pt}  \text{and}  \hspace{10pt}  \bodyint := \operatorname{int}(\body)
\end{equation*}

\emph{Remarks}.
$\body$ is a \emph{zonoid}, i.e. the image of an atom-free vector measure.
From $\Lambda_\mathbf{w}$ a suitable measure can be constructed;
see \cite{Akemann2001} page 105 and \cite{Bolker1969} page 338,
and for the reverse direction see \cite{rudinFA} p. 120.
If $\mathbf{w}$ is a step function as in the Main Theorem (or any function with finite range),
then $\body$ is a \emph{zonotope}; i.e. a zonoid that is also a polytope,
or equivalently the linear image of the unit cube $Q^N$ for some $N$,
see \cite{Bolker1969} page 330.
$\body$ could have been written $\body_\mathbf{w}$, 
but the dependence on $\mathbf{w}$ is suppressed for simplicity;
the function $\mathbf{w}$ is fixed in this section, up to the very end.

\begin{theorem}\label{thm:body}
The set $\body$ is a convex body in $\mathbb{R}^m$, and $\bodyint = \Lambda_\mathbf{w}( \mathrm{int}(Q^\infty) )$.
\end{theorem}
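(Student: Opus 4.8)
The plan is to establish the two assertions of Theorem \ref{thm:body} in sequence, using the convexity machinery from Section \ref{sec:convexity} together with the concrete description of $Q^\infty$ from the introduction. First I would record the easy facts: $Q^\infty$ is convex (it is an order interval in $L^\infty$), bounded, and closed in the $L^\infty$ topology, hence closed, bounded, and convex as a subset of the reflexive space $L^2[0,1]$. Since $\Lambda_\mathbf{w}$ restricted to $L^2[0,1]$ is a continuous linear map to $\mathbb{R}^m$ (this is where I invoke Theorem \ref{thm:Gram}, which tells us the restriction is even surjective, though I only need continuity here), Theorem \ref{thm:compact} immediately gives that $\body = \Lambda_\mathbf{w}(Q^\infty)$ is compact and convex in $\mathbb{R}^m$. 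The subtlety is that $Q^\infty$ has empty interior inside $L^2[0,1]$, so Theorem \ref{thm:compact} alone does not deliver non-empty interior of $\body$; for that I must work in $L^\infty[0,1]$, where $Q^\infty$ does have non-empty interior, namely $\operatorname{int}(Q^\infty) = \{f : \operatorname{ess.im}(f) \subseteq (0,1)\}$, which is non-empty since it contains $f_c \equiv 1/2$.

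For the non-empty interior and the identity $\bodyint = \Lambda_\mathbf{w}(\operatorname{int}(Q^\infty))$, I would apply Theorem \ref{thm:interior} with $X = L^\infty[0,1]$, $Y = \mathbb{R}^m$, $T = \Lambda_\mathbf{w}|_{L^\infty}$, and $A = Q^\infty$. The hypotheses of Theorem \ref{thm:interior} are: $T$ linear and continuous (clear), $T$ surjective (this is precisely condition $(d)$ of Theorem \ref{thm:Gram}, which we are assuming holds), and $A$ convex with non-empty interior (just noted). The conclusion is exactly that $T(A) = \body$ is convex with non-empty interior and $\operatorname{int}(T(A)) = T(\operatorname{int}(A))$, i.e. $\bodyint = \Lambda_\mathbf{w}(\operatorname{int}(Q^\infty))$. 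Combining this with the compactness from the previous paragraph, $\body$ is a compact convex set with non-empty interior, which is the definition of a convex body. One small point to check is that the two topologies ($L^\infty$ and $L^2$) give the same image $\body$ as a set — they do, since $\body$ is just the set-theoretic image $\Lambda_\mathbf{w}(Q^\infty)$ and $Q^\infty \subset L^\infty \cap L^2$ — and the same ambient $\mathbb{R}^m$ topology, so the notions of interior and compactness in $\mathbb{R}^m$ are unambiguous.

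I do not anticipate a serious obstacle here; the theorem is essentially a corollary of the two abstract results already proved. The only thing requiring mild care is the bookkeeping of which function space one works in at each step: compactness of $\body$ needs the reflexive space $L^2$ (via Theorem \ref{thm:compact}, since $L^\infty$ is not reflexive and one cannot run Mazur/Kakutani there), whereas non-emptiness of the interior needs $L^\infty$ (via Theorem \ref{thm:interior} and the open mapping theorem, since $Q^\infty$ is fat in $L^\infty$ but not in $L^2$). So the proof is genuinely a two-space argument, and the ``hard part'' is merely making sure the reader sees why neither space alone suffices. Once both ingredients are in hand, the conclusion that $\body$ is a convex body and that $\bodyint = \Lambda_\mathbf{w}(\operatorname{int}(Q^\infty))$ follows immediately.
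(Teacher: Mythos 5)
Your proposal is correct and follows essentially the same two-space argument as the paper: compactness of $\body$ via Theorem \ref{thm:compact} applied to the restriction of $\Lambda_\mathbf{w}$ to $L^2[0,1]$, and the interior statement via Theorem \ref{thm:interior} applied to the restriction to $L^\infty[0,1]$, where $Q^\infty$ has non-empty interior. The only nitpick is your phrase ``closed in the $L^\infty$ topology, hence closed \ldots{} in $L^2$'' --- closedness in the finer norm does not formally transfer to the coarser one, but $Q^\infty$ is indeed closed in $L^2[0,1]$ (as the paper asserts in the introduction), so the conclusion stands.
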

\begin{proof}
The statement about $\bodyint$ follows from \ref{thm:interior},
where $T$ is the restriction of $\Lambda_\mathbf{w}$ to $L^\infty[0,1]$, where $Q^\infty$ has non-empty interior.
Now in \ref{thm:compact} let $T$ be the restriction of $\Lambda_\mathbf{w}$ to
the Hilbert space $H = L^2[0,1]$.  
Since the cube $Q^\infty$ is closed in $H$, we conclude that $\body$ is compact.
And since $\bodyint$ is non-empty, $\body$ is a convex body (i.e. convex and compact with interior).
\end{proof}

The statement about the interiors in the previous theorem is important.
It states that if $z \in \bodyint$, then $\Lambda_\mathbf{w}f{=}z$ 
has a solution $f \in \operatorname{int}(Q^\infty)$.
This means that the set of \emph{all} solutions  $Q^\infty \cap  \Lambda_\mathbf{w}^{-1}(z)$
is a `big polyhedron', with codimension $m$ in $L^\infty[0,1]$.
Our goal is to compute a centroid of this solution set.
In this article, we do not consider the case where $z \in \partial \body$.
Continuing this emphasis on $\bodyint$, 
the next goal is to construct a diffeomorphism from $\mathbb{R}^m$ to $\bodyint$
that is used later in Section \ref{sec:mainresult}.

For $\mathbf{u} \in S^{m-1}$, the \emph{support function} $\psi_\body$ of $\body$ is defined by
\begin{equation*}
\psi_\body(\mathbf{u}) := \sup_{z \in \body} \langle \mathbf{u} , z \rangle
\end{equation*}
The \emph{support hyperplane} $H_\mathbf{u}(\body)$ and  \emph{face} $F_\mathbf{u}(\body)$ are defined by
\begin{equation*}
H_\mathbf{u}(\body) := \{ x ~|~ \langle \mathbf{u} , x \rangle = \psi_\body(\mathbf{u}) \}
\hspace{20pt} \text{and} \hspace{20pt}
F_\mathbf{u}(\body) := H_\mathbf{u}(\body) \cap \body
\end{equation*}
Since $\body$ is compact, the sup is taken at some point in $\body$, so $F_\mathbf{u}(\body)$ is non-empty,
and it is also convex, see \cite{Bolker1969}.
$\psi_\body$ is  Lipschitz continuous with constant $\sup_{z \in \body} \norm{z}$, see \cite{rockafellar2015convex}.
For $\mathbf{t} \in \mathbb{R}^m$ define 
$w_\mathbf{t}(x) := t_1 w_1(x) + \cdots + t_m w_m(x) = \langle \mathbf{t} , \mathbf{w}(x) \rangle$.

\begin{theorem}\label{thm:maximizing} $\text{}$
Let $f \in Q^\infty$ and $\mathbf{u} \in S^{m-1}$ be fixed.
\begin{enumerate}[label=(\alph*)]
\item $\langle \mathbf{u}, \Lambda_\mathbf{w}(f) \rangle = \psi_\body(\mathbf{u})$  
iff  $f(x){=}1$ when $w_\mathbf{u}(x)>0$, and $f(x){=}0$ when $w_\mathbf{u}(x)<0$
\item if $z \in \body$ and $\langle \mathbf{u} , z \rangle = \psi_\body(\mathbf{u})$, then  $z \in \partial \body$
\item if $f(x){=}1$ when $w_\mathbf{u}(x)>0$ and $f(x){=}0$ when $w_\mathbf{u}(x)<0$,
then $\Lambda_\mathbf{w}(f) \in \partial \body$
\end{enumerate}

\end{theorem}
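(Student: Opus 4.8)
The plan is to prove the three parts in the order (a), then (b), then (c), since (a) does the real work and (b), (c) follow quickly.

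For part (a), I would start from the definition $\psi_\body(\mathbf{u}) = \sup_{z \in \body}\langle \mathbf{u}, z\rangle$, and since every $z \in \body$ is $\Lambda_\mathbf{w}(g)$ for some $g \in Q^\infty$, rewrite this as $\sup_{g \in Q^\infty}\langle \mathbf{u}, \Lambda_\mathbf{w}(g)\rangle$. The key computation is
\[
\langle \mathbf{u}, \Lambda_\mathbf{w}(g)\rangle = \left\langle \mathbf{u}, \int_0^1 g(x)\mathbf{w}(x)\,dx\right\rangle = \int_0^1 g(x)\langle \mathbf{u}, \mathbf{w}(x)\rangle\,dx = \int_0^1 g(x)\,w_\mathbf{u}(x)\,dx.
\]
So maximizing $\langle \mathbf{u}, \Lambda_\mathbf{w}(g)\rangle$ over $g \in Q^\infty$ amounts to maximizing $\int_0^1 g(x)w_\mathbf{u}(x)\,dx$ subject to $g(x) \in [0,1]$ a.e. This is a pointwise problem: for a.e.\ $x$, the contribution $g(x)w_\mathbf{u}(x)$ is maximized by taking $g(x) = 1$ where $w_\mathbf{u}(x) > 0$ and $g(x) = 0$ where $w_\mathbf{u}(x) < 0$ (the value of $g$ on $\{w_\mathbf{u} = 0\}$ is irrelevant). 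Hence $\psi_\body(\mathbf{u}) = \int_{\{w_\mathbf{u} > 0\}} w_\mathbf{u}(x)\,dx$. For the ``iff'': if $f$ has the stated form then plainly $\langle \mathbf{u}, \Lambda_\mathbf{w}(f)\rangle = \psi_\body(\mathbf{u})$; conversely if $\langle \mathbf{u}, \Lambda_\mathbf{w}(f)\rangle = \psi_\body(\mathbf{u})$, then $\int_0^1 \big(\text{(optimal integrand)} - f(x)w_\mathbf{u}(x)\big)\,dx = 0$ where the integrand is pointwise nonnegative, forcing it to vanish a.e., which forces $f(x) = 1$ a.e.\ on $\{w_\mathbf{u} > 0\}$ and $f(x) = 0$ a.e.\ on $\{w_\mathbf{u} < 0\}$.

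For part (b), suppose $z \in \body$ with $\langle \mathbf{u}, z\rangle = \psi_\body(\mathbf{u})$. Then $z$ lies on the supporting hyperplane $H_\mathbf{u}(\body)$, and since $\psi_\body(\mathbf{u})$ is the \emph{supremum} of $\langle \mathbf{u}, \cdot\rangle$ over $\body$, no point of $\body$ has $\langle \mathbf{u}, \cdot\rangle > \psi_\body(\mathbf{u})$. But any neighborhood of $z$ in $\mathbb{R}^m$ contains points with $\langle \mathbf{u}, \cdot\rangle > \psi_\body(\mathbf{u})$ (move in the $+\mathbf{u}$ direction), so no neighborhood of $z$ is contained in $\body$; hence $z \in \partial \body$. (Here I use $\mathbf{u} \neq 0$, which holds since $\mathbf{u} \in S^{m-1}$.) Part (c) is then immediate: if $f$ has the stated form, part (a) gives $\langle \mathbf{u}, \Lambda_\mathbf{w}(f)\rangle = \psi_\body(\mathbf{u})$, and applying part (b) with $z := \Lambda_\mathbf{w}(f) \in \body$ yields $\Lambda_\mathbf{w}(f) \in \partial\body$.

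The only real subtlety — the ``hard part'' such as it is — is the measure-theoretic care in part (a): justifying the interchange that $\sup_{g}\int g w_\mathbf{u} = \int \sup_{t \in [0,1]} t\, w_\mathbf{u}(x)\,dx$, i.e.\ that the pointwise-optimal selection is measurable (it is, being $\mathbf{1}_{\{w_\mathbf{u} > 0\}}$) and actually lies in $Q^\infty \subset L^\infty$ (it does, being an indicator function), and handling the null set $\{w_\mathbf{u} = 0\}$ and the ``a.e.'' qualifiers correctly when extracting the characterization of $f$. Everything else is routine. I would also remark that the characterization in (a) is exactly the statement that the maximizing $f$'s are the "vertices'' of $Q^\infty$ whose support is (essentially) $\{w_\mathbf{u} \geq 0\}$ up to the null set where $w_\mathbf{u} = 0$, which connects with the earlier discussion that a linear functional on $Q^\infty$ attains its max at a vertex.
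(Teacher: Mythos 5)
Your proposal is correct and follows essentially the same route as the paper: rewrite $\psi_\body(\mathbf{u}) = \sup_{f \in Q^\infty} \langle f, w_\mathbf{u}\rangle_{1,\infty}$ and maximize pointwise for (a), argue that a point of $\body$ attaining the supremum cannot be interior for (b), and combine the two for (c). You simply spell out the a.e.\ details of the ``iff'' in (a) that the paper declares clear, which is fine.
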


\begin{proof}
For $(a)$, rewrite $\psi_\body(\mathbf{u})$ as
\begin{align*}
\psi_\body(\mathbf{u}) ~&:=~ \sup_{z \in \body} \langle \mathbf{u}, z \rangle  ~=~ \sup_{z \in \Lambda_\mathbf{w}(Q^\infty)} \langle \mathbf{u}, z \rangle  ~=~ \sup_{f \in Q^\infty} \langle \mathbf{u}, \Lambda_\mathbf{w}(f) \rangle \\
~&=~ \sup_{f \in Q^\infty} \langle \mathbf{u} , (\langle f,w_1 \rangle, \ldots, \langle f,w_m \rangle ) \rangle  ~=~ \sup_{f \in Q^\infty} \langle f , w_\mathbf{u}  \rangle
\end{align*}
When $\psi_\body(\mathbf{u})$ is expressed this way, in terms of $f$, part $(a)$ is clear.

For $(b)$, if $z \in \bodyint$ then there is a point $z' \in \body$ in an open ball around $z$ 
where $\langle \mathbf{u} , z' \rangle > \langle \mathbf{u} , z \rangle = \psi_\body(\mathbf{u})$.
This contradicts the definition of $\psi_\body$ and so $z \in \partial \body$.

Part $(c)$ follows at once from parts $(a)$ and $(b)$.
\end{proof}

\emph{Remark}.
Note that if the set $w_\mathbf{u}^{-1}(0)$ has measure 0, 
then the maximizing $f$ is unique in $L^1[0,1]$ and $F_\mathbf{u}(\body)$ is a single point.
If its measure is positive, one can change $f$ arbitrarily on $w_\mathbf{u}^{-1}(0)$ 
without changing the value of 
$\langle f , w_\mathbf{u}  \rangle = \langle \mathbf{u} , \Lambda_\mathbf{w}(f) \rangle$,
and the face $F_\mathbf{u}(\body)$ can be non-trivial in general.

\begin{defn}\label{defn:squashing}
A $C^1$ function $\sigma : \mathbb{R} \to [0,1]$ is a \emph{squashing function} iff
\begin{enumerate}[label=(\alph*)]
\item $\sigma'(t) > 0$ for all $t$
\item  $\lim_{t \to -\infty} = 0$ and $\lim_{t \to \infty} = 1$
\end{enumerate}
\end{defn}
For a $f \in L^\infty[0,1]$, the composition $\sigma \circ f$ is in the interior of $Q^\infty$
and is a `squashed' version of $f$.
A standard example of a squashing function is $\sigma(t) := (\tanh(t)+1)/2$,
but we will soon see that a different one is more important for us.

Define a $C^1$ function $G_\sigma : \mathbb{R}^m \to \bodyint$ by the formula
\begin{equation}\label{eqn:Gsigma}
G_\sigma( \mathbf{t} ) ~=~ G_\sigma( t_1, \ldots , t_m ) ~:=~ \Lambda_\mathbf{w}( \sigma ( t_1 w_1 + \cdots + t_m w_m ) ) 
= \int_0^1 \sigma ( \langle \mathbf{t},\mathbf{w}(x) \rangle ) \mathbf{w}(x) \, dx
\end{equation}

We will show that $G_\sigma : \mathbb{R}^m \to \bodyint$ is a diffeomorphism in a number of steps.

\begin{lemma}\label{lemma:PD}
The derivative $\mathbf{D}G_\sigma$ is positive-definite everywhere.
\end{lemma}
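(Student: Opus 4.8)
The plan is to compute $\mathbf{D}G_\sigma(\mathbf{t})$ directly by differentiating under the integral sign in \ref{eqn:Gsigma}, and then show the resulting matrix is positive-definite by exhibiting it as a Gram-type matrix with a strictly positive weight. Differentiating the $i$-th component $\int_0^1 \sigma(\langle \mathbf{t},\mathbf{w}(x)\rangle) w_i(x)\,dx$ with respect to $t_j$ moves the derivative onto $\sigma$ and produces the factor $w_j(x)$ from the chain rule, giving
\begin{equation*}
\bigl( \mathbf{D}G_\sigma(\mathbf{t}) \bigr)_{ij} ~=~ \int_0^1 \sigma'\!\bigl( \langle \mathbf{t},\mathbf{w}(x) \rangle \bigr) \, w_i(x) \, w_j(x) \, dx .
\end{equation*}
The differentiation under the integral is justified because $\mathbf{w}$ is essentially bounded, $\sigma'$ is continuous hence bounded on the compact essential range of $\langle \mathbf{t},\mathbf{w}(\cdot)\rangle$, and everything is uniform for $\mathbf{t}$ in a neighborhood; this is the routine part I would not belabor.

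Next I would verify positive-definiteness. For any nonzero $\xi \in \mathbb{R}^m$,
\begin{equation*}
\xi^{\mathsf T} \mathbf{D}G_\sigma(\mathbf{t}) \, \xi ~=~ \int_0^1 \sigma'\!\bigl( \langle \mathbf{t},\mathbf{w}(x) \rangle \bigr) \, \Bigl( \sum_{i=1}^m \xi_i w_i(x) \Bigr)^{\!2} dx ~=~ \int_0^1 \sigma'\!\bigl( \langle \mathbf{t},\mathbf{w}(x) \rangle \bigr) \, w_\xi(x)^2 \, dx .
\end{equation*}
By Definition \ref{defn:squashing}(a) the factor $\sigma'(\langle \mathbf{t},\mathbf{w}(x)\rangle)$ is strictly positive for every $x$, so the integrand is nonnegative and vanishes only where $w_\xi(x) = 0$. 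Hence the integral is zero only if $w_\xi = 0$ almost everywhere, i.e. $\xi_1 w_1 + \cdots + \xi_m w_m = 0$ in $L^1[0,1]$. Since $\xi \neq 0$, this contradicts the standing assumption that $w_1,\ldots,w_m$ are linearly independent (equivalently, the positive-definiteness of the Gram matrix in Theorem \ref{thm:Gram}(b)). Therefore $\xi^{\mathsf T} \mathbf{D}G_\sigma(\mathbf{t})\,\xi > 0$ for all $\xi \neq 0$, which is the claim.

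There is no real obstacle here; the only point requiring a small amount of care is the justification of differentiating under the integral, and even that is immediate from essential boundedness of $\mathbf{w}$ together with continuity (hence local boundedness) of $\sigma'$. The symmetry of $\mathbf{D}G_\sigma$ is automatic from the symmetric form $w_i w_j$ of the integrand. I would state the weighted-Gram identity for $\xi^{\mathsf T}\mathbf{D}G_\sigma\,\xi$ as the heart of the argument, note that strict positivity of $\sigma'$ lets one reduce to the already-established linear independence of the $w_i$, and conclude.
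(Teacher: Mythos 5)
Your proposal is correct and follows essentially the same route as the paper: compute the Jacobian entries $\int_0^1 \sigma'(\langle \mathbf{t},\mathbf{w}(x)\rangle)\, w_i(x) w_j(x)\,dx$ and use strict positivity of $\sigma'$ together with linear independence of the $w_i$. The paper phrases the final step as recognizing the matrix as the Gram matrix of $\{\sqrt{\sigma'}\,w_j\}$ and invoking Theorem \ref{thm:Gram}, while you evaluate the quadratic form $\xi^{\mathsf T}\mathbf{D}G_\sigma\,\xi$ directly; these are the same argument in different clothing.
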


\begin{proof}\label{proof:PD}
A short calculation shows that the $m \times m$ Jacobian of $G_\sigma$ is given by
\begin{equation}\label{eqn:DG}
(\mathbf{D}G_\sigma)_{ij} ~~=~~ \int_0^1 w_i(x) \sigma'(  t_1 w_1(x) + \cdots + t_m w_m(x) ) w_j(x) \, dx
\end{equation}
for $i=1,\ldots,m$ and $j=1,\ldots,m$.
Since $\sigma'(t)>0$ this can be written
\[
(\mathbf{D}G_\sigma)_{ij} ~=~ \langle \alpha w_i, \alpha w_j \rangle  ~~~~  \text{where} ~~ \alpha(x) = \sqrt{ \sigma'(  t_1 w_1(x) + \cdots + t_m w_m(x) ) } ~~> 0
\]
This is the Gram matrix of the set of functions $\{\alpha w_j\}$ which is clearly linearly independent,
since \{$w_j$\} is.
Thus by \ref{thm:Gram} $\mathbf{D}G_\sigma(t_1, \ldots ,t_m)$ is positive-definite.
\end{proof}

\emph{Remark}.
From the conclusion of this lemma, one might suspect that $G_\sigma$ is the gradient
of a convex real-valued function, and that $\mathbf{D}G_\sigma$ is its Hessian.
This suspicion is correct.
If $K(t) := \int_0^t \sigma(s) \, ds$ and
$h(\mathbf{t}) := \int_0^1 K( \langle \mathbf{t},\mathbf{w}(x) \rangle ) \, dx$
then $G_\sigma$ is the gradient of $h$.
The function $K(t)$ naturally appears later in Section \ref{sec:PK},
and $h(\mathbf{t})$ (in a slightly modified form) appears later in Section \ref{sec:mainresult}.

\begin{lemma}\label{lemma:increasing}
For fixed $x,h \in \mathbb{R}^m$ with $h \ne 0$,
the function  $s \mapsto \langle G_\sigma(x + s h), h \rangle$ is strictly increasing in $s$.
\end{lemma}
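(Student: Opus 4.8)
The plan is to differentiate in $s$ and quote \ref{lemma:PD}. Since $G_\sigma : \mathbb{R}^m \to \bodyint$ is $C^1$ and $s \mapsto x+sh$ is affine, the map $s \mapsto G_\sigma(x+sh)$ is $C^1$ with derivative $\mathbf{D}G_\sigma(x+sh)\,h$ by the chain rule. Hence the scalar function $\phi(s) := \langle G_\sigma(x+sh), h \rangle$ is $C^1$ and
\[
\phi'(s) ~=~ \langle \mathbf{D}G_\sigma(x+sh)\,h,\, h \rangle .
\]

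First I would apply \ref{lemma:PD}, which states that $\mathbf{D}G_\sigma$ is positive-definite at every point, in particular at $x+sh$. Because $h \ne 0$, this forces $\phi'(s) > 0$ for all $s \in \mathbb{R}$, and a $C^1$ function of one real variable with everywhere strictly positive derivative is strictly increasing. That is precisely the assertion.

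Alternatively one can bypass \ref{lemma:PD} and argue directly from the explicit Jacobian \ref{eqn:DG}: it gives $\phi'(s) = \int_0^1 \sigma'( \langle x+sh, \mathbf{w}(\xi) \rangle )\, w_h(\xi)^2 \, d\xi$, whose integrand is non-negative since $\sigma' > 0$, and which is not identically zero because $w_h$ cannot vanish almost everywhere when $h \ne 0$ (that would contradict the standing linear independence of $w_1,\ldots,w_m$); hence $\phi'(s) > 0$. Either way, there is no genuine obstacle here: the substantive content lies entirely in \ref{lemma:PD}, of which this lemma is essentially a corollary, the only points worth a word being the legitimacy of differentiating $\phi$ (immediate from $G_\sigma \in C^1$) and the non-vanishing of $w_h$.
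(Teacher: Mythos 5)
Your proof is correct and is essentially the paper's own argument: differentiate $s \mapsto \langle G_\sigma(x+sh), h\rangle$ and invoke the positive-definiteness of $\mathbf{D}G_\sigma$ from Lemma \ref{lemma:PD} to get a strictly positive derivative. The alternative direct computation via \ref{eqn:DG} is a fine extra check but adds nothing beyond what \ref{lemma:PD} already provides.
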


\begin{proof} 
The derivative of this function is
$s \mapsto \langle \mathbf{D}G_\sigma (x + sh) h, h \rangle$,
which is positive by \ref{lemma:PD}.
\end{proof}

\begin{theorem}\label{thm:injective}
The function $G_\sigma : \mathbb{R}^m \to \bodyint$ is injective.
\end{theorem}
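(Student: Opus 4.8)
The plan is to derive injectivity of $G_\sigma$ from the strict monotonicity established in Lemma \ref{lemma:increasing}. Suppose, for contradiction, that $G_\sigma(x_1) = G_\sigma(x_2)$ with $x_1 \ne x_2$ in $\mathbb{R}^m$. Set $h := x_2 - x_1 \ne 0$ and consider the one-variable function $\phi(s) := \langle G_\sigma(x_1 + s h), h \rangle$ for $s \in [0,1]$. By Lemma \ref{lemma:increasing}, $\phi$ is strictly increasing, so in particular $\phi(0) < \phi(1)$. But $\phi(0) = \langle G_\sigma(x_1), h \rangle$ and $\phi(1) = \langle G_\sigma(x_2), h \rangle$, and these are equal since $G_\sigma(x_1) = G_\sigma(x_2)$. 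This contradiction forces $x_1 = x_2$, which is exactly injectivity.

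There is essentially no obstacle here: all the analytic content — positive-definiteness of the Jacobian, hence the strict increase of the directional slice — has already been packaged into Lemmas \ref{lemma:PD} and \ref{lemma:increasing}. The only point to be careful about is that $G_\sigma$ genuinely maps into $\bodyint$ (so that the expression $\langle G_\sigma(x_1 + sh), h\rangle$ makes sense for all $s$, which it does since the domain is all of $\mathbb{R}^m$), and that we are applying Lemma \ref{lemma:increasing} with the base point $x = x_1$ and direction $h = x_2 - x_1$; the hypothesis $h \ne 0$ is met precisely because we assumed $x_1 \ne x_2$. One could alternatively phrase the argument via the mean value theorem: $\langle G_\sigma(x_2) - G_\sigma(x_1), h\rangle = \int_0^1 \langle \mathbf{D}G_\sigma(x_1 + sh)\, h, h\rangle \, ds > 0$ because the integrand is everywhere positive by Lemma \ref{lemma:PD}, again contradicting $G_\sigma(x_1) = G_\sigma(x_2)$. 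Either formulation is short.

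I would present the contradiction form using Lemma \ref{lemma:increasing} directly, since it is the cleanest and reuses the lemma verbatim. If anything, the slightly delicate step conceptually is simply noticing that injectivity of a map $\mathbb{R}^m \to \mathbb{R}^m$ whose derivative is everywhere positive-definite is a standard consequence of this "monotone operator" property (a gradient of a strictly convex function, as the Remark after Lemma \ref{lemma:PD} observes, is automatically injective on a convex domain) — but here it is entirely elementary given what precedes.
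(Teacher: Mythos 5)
Your proof is correct and is essentially the paper's own argument: the paper likewise applies Lemma \ref{lemma:increasing} at $s=0$ and $s=1$ with direction $h$ to conclude $\langle G_\sigma(x+h)-G_\sigma(x),h\rangle>0$, hence $G_\sigma(x+h)\ne G_\sigma(x)$ for $h\ne 0$; phrasing it as a contradiction changes nothing substantive.
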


\begin{proof} 
Let $s{=}0$ and $s{=}1$ in the previous lemma to get:
\begin{align*}
\langle  G_\sigma (x) , h \rangle ~~&<~~  \langle  G_\sigma (x + h) , h \rangle \\
0 ~~=~~ \langle  G_\sigma (x) - G_\sigma (x), h \rangle ~~&<~~  \langle  G_\sigma (x + h) - G_\sigma (x), h \rangle  
\end{align*}
So $G_\sigma (x + h) \ne G_\sigma (x)$ whenever $h \ne 0$.
\end{proof}

\begin{lemma}\label{lemma:toboundary}
Let $\mathbf{u} \in S^{m-1}$ be fixed. Then as $s \to \infty$
\begin{enumerate}[label=(\alph*)]
\item $G_\sigma(s \mathbf{u}) \to$ a point in $\partial \body$  (in the $L^1$ topology)
\item $\langle G_\sigma(s \mathbf{u}), \mathbf{u} \rangle \to \psi_\body( \mathbf{u} )$, 
and $\langle G_\sigma(s \mathbf{u}), \mathbf{u} \rangle$ is strictly increasing with $s$
\end{enumerate}
\end{lemma}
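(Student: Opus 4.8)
The plan is to analyze the scalar function $\phi(s) := \langle G_\sigma(s\mathbf{u}), \mathbf{u}\rangle$ first, since part (b) is really the engine that drives part (a). By Lemma \ref{lemma:increasing} (applied with $x = 0$, $h = \mathbf{u}$), $\phi$ is strictly increasing in $s$, which gives half of (b) immediately. For the limit, I would write out
\begin{equation*}
\phi(s) = \int_0^1 \sigma(s\, w_\mathbf{u}(x))\, w_\mathbf{u}(x)\, dx,
\end{equation*}
and observe that the integrand converges pointwise a.e.\ as $s\to\infty$: where $w_\mathbf{u}(x) > 0$ it tends to $w_\mathbf{u}(x)$, where $w_\mathbf{u}(x) < 0$ it tends to $0$ (since $\sigma(t)\to 0$ there while $w_\mathbf{u}$ stays bounded, and $\sigma(t)\,t \to 0$ as $t\to -\infty$ because $\sigma$ is bounded), and where $w_\mathbf{u}(x) = 0$ it is identically $0$. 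The integrand is dominated by $|w_\mathbf{u}| \in L^1[0,1]$ (here I use $\sigma \le 1$), so dominated convergence gives
\begin{equation*}
\phi(s) \longrightarrow \int_{\{w_\mathbf{u} > 0\}} w_\mathbf{u}(x)\, dx = \sup_{f \in Q^\infty} \langle f, w_\mathbf{u}\rangle = \psi_\body(\mathbf{u}),
\end{equation*}
where the middle equality is exactly the characterization of the maximizing $f$ in Theorem \ref{thm:maximizing}(a) (take $f = \mathbf{1}_{\{w_\mathbf{u}>0\}}$), and the last is the reformulation of $\psi_\body$ proved in the proof of that theorem. This settles (b).

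For part (a), the issue is that $L^1$ convergence of the functions $\sigma(s\,w_\mathbf{u})$ is not automatic from pointwise convergence alone, but here it is: $\sigma(s\,w_\mathbf{u}(x)) \to \mathbf{1}_{\{w_\mathbf{u}>0\}}(x)$ pointwise a.e.\ (same three-case analysis, now without the $w_\mathbf{u}$ factor), and $0 \le \sigma(s\,w_\mathbf{u}) \le 1$, so dominated convergence on the finite measure space $[0,1]$ gives $\sigma(s\,w_\mathbf{u}) \to \mathbf{1}_{\{w_\mathbf{u}>0\}}$ in $L^1[0,1]$. Applying the continuous operator $\Lambda_\mathbf{w}$ yields $G_\sigma(s\mathbf{u}) = \Lambda_\mathbf{w}(\sigma(s\,w_\mathbf{u})) \to \Lambda_\mathbf{w}(\mathbf{1}_{\{w_\mathbf{u}>0\}})$ in $\mathbb{R}^m$. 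Call this limit point $z_\infty$. By Theorem \ref{thm:maximizing}(c), since $\mathbf{1}_{\{w_\mathbf{u}>0\}}$ equals $1$ where $w_\mathbf{u}>0$ and $0$ where $w_\mathbf{u}<0$, we get $z_\infty = \Lambda_\mathbf{w}(\mathbf{1}_{\{w_\mathbf{u}>0\}}) \in \partial\body$, which is exactly (a).

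I do not expect a serious obstacle here; the one point requiring a little care is the behavior of $\sigma(s\,w_\mathbf{u}(x))\,w_\mathbf{u}(x)$ on the set where $w_\mathbf{u}(x) < 0$ — one must note that although $w_\mathbf{u}(x)$ does not go to zero, the product does, because $\sigma$ vanishes at $-\infty$ and $w_\mathbf{u}$ is essentially bounded, so the product is even dominated by $|w_\mathbf{u}|$ uniformly in $s$. (If one wanted the sharper fact that $\sigma(t)\,t \to 0$ as $t \to -\infty$, it follows from $0 \le \sigma(t) \le 1$ only after noting $\sigma(t) = O(e^{-|t|})$ for the specific $\sigma$, but that refinement is not needed: boundedness of $\sigma$ plus essential boundedness of $w_\mathbf{u}$ already supplies an $L^1$ dominating function, and pointwise we use $\sigma(t) \to 0$ together with $|w_\mathbf{u}(x)| < \infty$ a.e.) Everything else is a routine application of the dominated convergence theorem together with the already-proven Theorems \ref{thm:maximizing} and Lemma \ref{lemma:increasing}.
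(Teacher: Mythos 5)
Your part (b) is correct, and it uses the paper's own toolkit (dominated convergence, Theorem \ref{thm:maximizing}, Lemma \ref{lemma:increasing}) in a slightly different order: you compute the scalar limit $\langle G_\sigma(s\mathbf{u}),\mathbf{u}\rangle \to \psi_\body(\mathbf{u})$ directly, whereas the paper proves (a) first and then gets the limit in (b) from (a), continuity of the inner product, and Theorem \ref{thm:maximizing}(a). That reorganization is harmless, and your dominating function $\abs{w_\mathbf{u}}$ is exactly what is needed.

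In part (a), however, there is a concrete error: you claim $\sigma(s\,w_\mathbf{u}) \to \mathbf{1}_{\{w_\mathbf{u}>0\}}$ pointwise a.e. On the set $\{w_\mathbf{u}=0\}$ one has $\sigma(s\,w_\mathbf{u}(x)) \equiv \sigma(0) \in (0,1)$ for every $s$, and this set can have positive measure --- the paper explicitly allows this (see the Remark after Theorem \ref{thm:maximizing}, where the face $F_\mathbf{u}(\body)$ is nontrivial precisely in that case). So the $L^1$ limit is not $\mathbf{1}_{\{w_\mathbf{u}>0\}}$ but the function $f_\mathbf{u}$ equal to $1$ on $\{w_\mathbf{u}>0\}$, $0$ on $\{w_\mathbf{u}<0\}$, and $\sigma(0)$ on $\{w_\mathbf{u}=0\}$, and the limit point of $G_\sigma(s\mathbf{u})$ is $\Lambda_\mathbf{w}(f_\mathbf{u})$, which in general differs from $\Lambda_\mathbf{w}(\mathbf{1}_{\{w_\mathbf{u}>0\}})$ since $\mathbf{w}$ need not vanish where $w_\mathbf{u}$ does. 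The repair is immediate and the conclusion survives: Theorem \ref{thm:maximizing}(c) places no condition on the set $\{w_\mathbf{u}=0\}$, so $\Lambda_\mathbf{w}(f_\mathbf{u}) \in \partial\body$ all the same; this is exactly the limit function the paper uses. Your part (b) is unaffected by this, because the factor $w_\mathbf{u}$ annihilates the integrand on $\{w_\mathbf{u}=0\}$.
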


\begin{proof}
From property $(b)$ of $\sigma$ we have
$\lim_{s \to \infty} (\sigma \circ w_{s\mathbf{u}}) = f_\mathbf{u}$ pointwise, where
\[ 
f_\mathbf{u}(x) :=
\begin{cases}    
0 & \langle \mathbf{u} , \mathbf{w}(x) \rangle < 0  \\
1 & \langle \mathbf{u} , \mathbf{w}(x) \rangle > 0  \\
\sigma(0) & \langle \mathbf{u} , \mathbf{w}(x) \rangle = 0
\end{cases}
\]
This is by the defining properties of $\sigma$.
The family is bounded between 0 and 1, and so by the Lebesgue Dominated Convergence theorem, \cite{LangReal} p. 249, 
we have in $\mathbb{R}^m$:
\begin{equation*}
\lim_{s \to \infty} G_\sigma( s\mathbf{u} ) ~=~ \lim_{s \to \infty} \Lambda_\mathbf{w} (\sigma \circ w_{s\mathbf{u}}) ~=~   \Lambda_\mathbf{w}(f_\mathbf{u}) ~~\in~~ \partial{\body}
\end{equation*}
The final statement about $\partial{\body}$ is part $(c)$ of Theorem \ref{thm:maximizing}.
This shows part $(a)$.

For part $(b)$ the convergence follows from part $(a)$, the continuity of the inner product,
and part $(a)$ of Theorem \ref{thm:maximizing}.
The strictly increasing claim follows from Lemma \ref{lemma:increasing}.
\end{proof}

Now change the context of $\mathbf{u}$ from fixed to variable.

\begin{lemma}\label{lemma:uniform}
The sequence of functions $g_n : S^{m-1} \to \mathbb{R}$ given by
$g_n( \mathbf{u} ) :=  \langle G_\sigma(n \mathbf{u}), \mathbf{u} \rangle$, 
increases with $n$ and converges uniformly to $\psi_\body$.
\end{lemma}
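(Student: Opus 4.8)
The statement to prove is that the functions $g_n(\mathbf{u}) := \langle G_\sigma(n\mathbf{u}), \mathbf{u} \rangle$ increase with $n$ and converge uniformly to $\psi_\body$ on $S^{m-1}$. Monotonicity is already free: by Lemma \ref{lemma:toboundary}(b), for each fixed $\mathbf{u}$ the map $s \mapsto \langle G_\sigma(s\mathbf{u}),\mathbf{u}\rangle$ is strictly increasing, so $g_n(\mathbf{u}) < g_{n+1}(\mathbf{u})$ for every $\mathbf{u}$. Pointwise convergence $g_n(\mathbf{u}) \to \psi_\body(\mathbf{u})$ is also free, being exactly Lemma \ref{lemma:toboundary}(b) read in the limit. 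So the only real content is upgrading pointwise convergence to uniform convergence.

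The natural tool is Dini's theorem: a monotone sequence of continuous functions on a compact space converging pointwise to a continuous limit converges uniformly. The compact space is $S^{m-1}$. Monotonicity is in hand. So I would break the remaining work into two lemmas-within-the-proof: (i) each $g_n$ is continuous on $S^{m-1}$, and (ii) the limit $\psi_\body$ is continuous on $S^{m-1}$. For (ii) I can simply cite the fact already recorded in the text that $\psi_\body$ is Lipschitz continuous (with constant $\sup_{z\in\body}\norm{z}$), so nothing to do. For (i), $g_n$ is the composition of the continuous map $\mathbf{u}\mapsto G_\sigma(n\mathbf{u})$ (continuity of $G_\sigma$ follows from it being $C^1$, as established when $G_\sigma$ was defined) with the inner product, which is jointly continuous; hence $g_n$ is continuous. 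Then Dini applies and delivers uniform convergence.

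First I would state that monotonicity and pointwise convergence are immediate from Lemma \ref{lemma:toboundary}(b). Next I would observe $g_n \in C(S^{m-1})$ because $G_\sigma$ is $C^1$ hence continuous and the inner product is continuous. Then I would note $\psi_\body \in C(S^{m-1})$ since it is Lipschitz. Finally I would invoke Dini's theorem on the compact metric space $S^{m-1}$ to conclude uniform convergence.

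**The main obstacle.** There isn't a deep one — this lemma is essentially a packaging of earlier results plus Dini. The only point requiring a moment's care is making sure the hypotheses of Dini are all genuinely in place: an \emph{increasing} sequence (yes, by strict monotonicity in $s$), \emph{continuous} terms (yes, via the $C^1$ regularity of $G_\sigma$), a \emph{continuous} limit (yes, $\psi_\body$ is Lipschitz), and a \emph{compact} domain (yes, $S^{m-1}$). If one wanted to avoid quoting Dini, one could instead give a direct $\varepsilon$-net argument: cover $S^{m-1}$ by finitely many balls on which $g_n$ and $\psi_\body$ oscillate by less than $\varepsilon/3$ (uniform continuity on the compact sphere), pick $N$ large enough that $g_N$ is within $\varepsilon/3$ of $\psi_\body$ at each of the finitely many centers, and combine — but citing Dini is cleaner.
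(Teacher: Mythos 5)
Your proposal is correct and takes essentially the same route as the paper: monotonicity and pointwise convergence from Lemma \ref{lemma:toboundary}, continuity of the terms and of the limit $\psi_\body$ (the latter via its Lipschitz property), and Dini's theorem on the compact sphere $S^{m-1}$. Nothing to add.
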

\begin{proof}
By the previous lemma, the sequence converges pointwise,
and it increases to a continuous function.
Since $S^{m-1}$ is compact, the convergence is uniform by Dini's Theorem,
\cite{Stromberg2015} page 143.
\end{proof}

The next three lemmas are variants of each other,
with slightly different hypotheses and the same conclusion.
Any one of them can be used in Theorem \ref{thm:diffeo}.
Denote the closed unit ball in $\mathbb{R}^m$ by $B^m$.

\begin{lemma}\label{lemma:PH}
Let $g: B^m \to \mathbb{R}^m$ be a $C^0$ map.
Suppose that $\langle g(\mathbf{u}) , \mathbf{u} \rangle > 0$ for all $\mathbf{u} \in S^{m-1}$.
Then  $g(\mathbf{x}){=}0$ has a root $\mathbf{x}' \in \operatorname{int}(B^m)$.
\end{lemma}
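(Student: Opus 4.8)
The plan is to recognize Lemma \ref{lemma:PH} as essentially a topological consequence of the hypothesis that $g$ points ``outward'' along the boundary sphere $S^{m-1}$, and then invoke a standard degree-theoretic or Brouwer-type argument. The cleanest route I would take is a homotopy argument: define $H : B^m \times [0,1] \to \mathbb{R}^m$ by $H(\mathbf{x}, \lambda) := \lambda \mathbf{x} + (1-\lambda) g(\mathbf{x})$, interpolating between the identity map and $g$. The key observation is that for $\mathbf{u} \in S^{m-1}$ we have $\langle H(\mathbf{u},\lambda), \mathbf{u} \rangle = \lambda + (1-\lambda)\langle g(\mathbf{u}), \mathbf{u} \rangle > 0$ since both terms are nonnegative and the first is positive for $\lambda > 0$ while the second is positive for $\lambda < 1$. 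Hence $H(\mathbf{u},\lambda) \ne 0$ for every $\mathbf{u} \in S^{m-1}$ and every $\lambda \in [0,1]$, so the homotopy is admissible (never vanishing on the boundary). By homotopy invariance of the Brouwer degree, $\deg(g, \operatorname{int}(B^m), 0) = \deg(\operatorname{id}, \operatorname{int}(B^m), 0) = 1 \ne 0$, and a nonzero degree forces the existence of a root $\mathbf{x}' \in \operatorname{int}(B^m)$ with $g(\mathbf{x}') = 0$.

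An alternative I would mention (in case one prefers to avoid degree theory) is a direct reduction to the no-retraction theorem / Brouwer fixed point theorem. Suppose for contradiction that $g$ has no zero on all of $B^m$. Then $r(\mathbf{x}) := -g(\mathbf{x})/\lVert g(\mathbf{x}) \rVert$ is a well-defined continuous map $B^m \to S^{m-1}$. I would like to show $r$ is a retraction, i.e.\ that $r$ restricted to $S^{m-1}$ is homotopic to the identity (or, with a slight modification of $g$ near the boundary, literally the identity), contradicting the fact that $S^{m-1}$ is not a retract of $B^m$. The outward-pointing condition $\langle g(\mathbf{u}),\mathbf{u}\rangle > 0$ is exactly what guarantees $-g(\mathbf{u})$ is never a positive multiple of $\mathbf{u}$, hence $r(\mathbf{u}) \ne \mathbf{u}$ is false in a controlled way — more precisely, the straight-line homotopy from $\mathbf{u}$ to $r(\mathbf{u})$ on the sphere never passes through the antipode issue, so $r|_{S^{m-1}} \simeq \operatorname{id}_{S^{m-1}}$, giving the contradiction.

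The main obstacle, such as it is, is simply a matter of bookkeeping: verifying carefully that the homotopy $H$ (or the retraction $r$) is nonvanishing on the boundary for the full parameter range, which is where the strict inequality $\langle g(\mathbf{u}),\mathbf{u}\rangle > 0$ (rather than merely $\geq 0$) is used, and making sure continuity of $g$ on the closed ball $B^m$ is enough — it is, since $B^m$ is compact and we only need $g$ continuous, not differentiable, as Brouwer degree is defined for $C^0$ maps via approximation. I would present the degree-theoretic version as the main proof since it is shortest, citing a standard reference (e.g.\ a text on degree theory or nonlinear analysis) for homotopy invariance and the fact that nonzero degree implies a root. No delicate estimates are required; the content is entirely the elementary topological fact that a vector field pointing outward on a sphere must vanish somewhere inside.
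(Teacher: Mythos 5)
Your degree-theoretic proof is correct, but it takes a genuinely different route from the paper's. The paper proves Lemma \ref{lemma:PH} by viewing $g$ as a $C^0$ vector field on $B^m$ that points outward on $\partial B^m$ and invoking the Poincar\'e--Hopf theorem: if $g$ had no zero in $\operatorname{int}(B^m)$ the Euler characteristic of $B^m$ would have to vanish, contradicting $\chi(B^m)=1$. You instead run the straight-line homotopy $H(\mathbf{x},\lambda)=\lambda\mathbf{x}+(1-\lambda)g(\mathbf{x})$, check it never vanishes on $S^{m-1}$ thanks to the strict inequality $\langle g(\mathbf{u}),\mathbf{u}\rangle>0$, and conclude $\deg(g,\operatorname{int}(B^m),0)=\deg(\operatorname{id},\operatorname{int}(B^m),0)=1$, so a root exists. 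Interestingly, this is essentially the same homotopy the paper uses in its third variant, Lemma \ref{lemma:degree}, but there it is applied to the normalized sphere map $\mathbf{u}\mapsto g(\mathbf{u})/\abs{g(\mathbf{u})}$ and requires the extra hypothesis that $g$ be an imbedding (so that Jordan--Brouwer applies); your version works directly with the degree on the ball and needs only continuity. What each approach buys: the paper's Poincar\'e--Hopf argument is a one-liner given that theorem, though applying it to a merely $C^0$ field with no zeros leans on a generalized statement (hence the citation to a generalization); your argument is the standard textbook one, self-contained modulo basic Brouwer degree theory, which is defined for $C^0$ maps by approximation, so the regularity bookkeeping you flag is indeed harmless. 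Your secondary no-retraction sketch is more loosely argued (the homotopy of $r|_{S^{m-1}}$ to the identity is asserted rather than written out), but since you present the degree argument as the main proof, the lemma is fully established.
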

\begin{proof}
Think of $g$ as a $C^0$ vector field on $B^m$.
By hypothesis $g$ is outward-pointing on $\partial B^m$.
If $g$ has no zeros in $\operatorname{int}(B^m)$ then by the Poincaré-Hopf Theorem,
see \cite{jubin2009} or \cite{Hirsch1976}, 
the Euler characteristic $\chi(B^m)=0$.
But $\chi(B^m)=1$ and so $g$ must have a zero $\mathbf{x}' \in \operatorname{int}(B^m)$.
\end{proof}

If $g$ is a gradient vector field, the proof is elementary.

\begin{lemma}\label{lemma:gradient}
Let $f: B^m \to \mathbb{R}$ be a $C^1$ function, and let $g := \operatorname{grad}(f)$.
Suppose that $\langle g(\mathbf{u}) , \mathbf{u} \rangle > 0$ for all $\mathbf{u} \in S^{m-1}$.
Then  $g(\mathbf{x}){=}0$ has a root $\mathbf{x}' \in \operatorname{int}(B^m)$.
\end{lemma}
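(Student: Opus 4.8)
The plan is to exploit the extremal characterization of zeros of a gradient field: a point where $\operatorname{grad}(f)$ vanishes is a critical point of $f$, and, conversely, every interior extremum is such a point. Since $B^m$ is compact and $f$ is continuous, $f$ attains a global minimum at some $\mathbf{x}' \in B^m$. The whole content of the proof is then to show that this minimizer cannot lie on the boundary sphere $S^{m-1}$, so that it lies in $\operatorname{int}(B^m)$, where $\operatorname{grad}(f)(\mathbf{x}') = 0$ by the first-derivative test.

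To rule out the boundary I would argue by contradiction. Suppose the minimum is attained at some $\mathbf{u} \in S^{m-1}$, and consider the one-variable function $\phi(t) := f((1-t)\mathbf{u})$ for $t \in [0,1]$, which traces the radial segment running inward from $\mathbf{u}$ to the origin. Since $f$ is $C^1$, $\phi$ is differentiable at $t=0$ with $\phi'(0) = \langle \operatorname{grad}(f)(\mathbf{u}), -\mathbf{u}\rangle = -\langle g(\mathbf{u}), \mathbf{u}\rangle < 0$ by hypothesis. Hence $\phi(t) < \phi(0) = f(\mathbf{u})$ for all sufficiently small $t > 0$, i.e. $f$ takes a strictly smaller value at the interior point $(1-t)\mathbf{u}$, contradicting that $\mathbf{u}$ is a global minimizer. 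Therefore $\mathbf{x}' \in \operatorname{int}(B^m)$.

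Finally, since $\mathbf{x}'$ is a global (hence local) minimum of the $C^1$ function $f$ lying in the open set $\operatorname{int}(B^m)$, Fermat's theorem gives $\operatorname{grad}(f)(\mathbf{x}') = 0$, that is $g(\mathbf{x}') = 0$, which is the desired root.

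I do not anticipate a genuine obstacle here; the only point requiring mild care is the meaning of ``$C^1$ on $B^m$'', which I take to include the existence of the one-sided radial derivative at boundary points (equivalently, $f$ extends $C^1$ to a neighborhood of $B^m$), so that the computation of $\phi'(0)$ is legitimate. If one wishes to sidestep this boundary-regularity subtlety entirely, one can instead invoke Lemma \ref{lemma:PH} directly, since a $C^1$ function has continuous gradient and the hypotheses coincide; but the minimization argument above is the ``elementary'' proof the preceding remark alludes to.
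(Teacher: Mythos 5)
Your proof is correct and follows essentially the same route as the paper: minimize $f$ on the compact ball, use the outward-pointing hypothesis $\langle g(\mathbf{u}),\mathbf{u}\rangle>0$ to exclude a boundary minimizer, and conclude $g(\mathbf{x}')=0$ at the interior minimum. Your radial-derivative computation simply makes explicit the step the paper states as ``since $g$ is outward pointing on $\partial B^m$, $\mathbf{x}'\notin\partial B^m$,'' so there is nothing to add.
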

\begin{proof}
Let $\mathbf{x}'$ be a point in $B^m$ that minimizes $f$.
Since $g$ is outward pointing on $\partial B^m$,
$\mathbf{x}' \notin \partial B^m$ and we must have  $\mathbf{x}' \in \operatorname{int}(B^m)$.
It follows at once that $g(\mathbf{x}'){=}0$.
\end{proof}

If the $g$ in Lemma \ref{lemma:PH} is an imbedding, we have a proof with very different flavor.

\begin{lemma}\label{lemma:degree}
Let $ g: B^m \to \mathbb{R}^m$ be a $C^0$ imbedding.
Suppose that $\langle g(\mathbf{u}) , \mathbf{u} \rangle > 0$ for all $\mathbf{u} \in S^{m-1}$.
Then  $g(\mathbf{x}){=}0$ has a root $\mathbf{x}' \in \operatorname{int}(B^m)$.
\end{lemma}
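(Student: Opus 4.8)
The plan is to use the topological degree (winding number) of the map $g$ restricted to the boundary sphere, exploiting the hypothesis that $g$ is outward-pointing on $S^{m-1}$ together with the fact that $g$ is an embedding. First I would set up the straight-line homotopy $g_t(\mathbf{u}) := (1-t)\,g(\mathbf{u}) + t\,\mathbf{u}$ for $\mathbf{u} \in S^{m-1}$ and $t \in [0,1]$. For each $\mathbf{u} \in S^{m-1}$ we have $\langle g_t(\mathbf{u}), \mathbf{u} \rangle = (1-t)\langle g(\mathbf{u}),\mathbf{u}\rangle + t > 0$ by hypothesis, so $g_t(\mathbf{u}) \ne 0$ for every $t$ and every $\mathbf{u}$. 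Hence $g|_{S^{m-1}}$ is homotopic through nowhere-vanishing maps to the identity $\mathbf{u} \mapsto \mathbf{u}$, and therefore $\deg(g|_{S^{m-1}}) = \deg(\mathrm{id}_{S^{m-1}}) = 1$.

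Next I would argue that a nonzero degree forces a root in the interior. Suppose, for contradiction, that $g(\mathbf{x}) \ne 0$ for all $\mathbf{x} \in \operatorname{int}(B^m)$; combined with $g \ne 0$ on $S^{m-1}$ (already established), $g$ is nowhere zero on all of $B^m$. Then $\mathbf{x} \mapsto g(\mathbf{x})/|g(\mathbf{x})|$ is a continuous map $B^m \to S^{m-1}$ whose restriction to $S^{m-1}$ is $g|_{S^{m-1}}/|g|_{S^{m-1}}|$, which is homotopic to the identity by the homotopy above (after normalizing, which is legitimate since the homotopy never vanishes). That means $g|_{S^{m-1}}$, a degree-$1$ map $S^{m-1} \to S^{m-1}$, extends continuously over the ball $B^m$; but a map $S^{m-1} \to S^{m-1}$ extends over $B^m$ iff it is null-homotopic, i.e. has degree $0$. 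This contradicts $\deg = 1$. Hence $g$ has a root $\mathbf{x}' \in \operatorname{int}(B^m)$.

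I do not actually need the embedding hypothesis for this argument — the outward-pointing condition alone suffices, which is exactly why this lemma has the same conclusion as Lemma \ref{lemma:PH} and Lemma \ref{lemma:gradient}. The role of ``very different flavor'' is simply that one may prefer to phrase things degree-theoretically rather than via Poincaré–Hopf; if one wants to genuinely use that $g$ is an embedding, an alternative is: by invariance of domain $g(\operatorname{int}(B^m))$ is open, $g(B^m)$ is compact hence closed, and one checks that $0$ lies in $g(B^m)$ by a connectedness/separation argument on the image, then rules out $0 \in g(\partial B^m)$ by the outward-pointing hypothesis. The main obstacle is purely expository: deciding how much degree theory to assume as black-box versus proving the ``nonzero degree $\Rightarrow$ interior root'' step from scratch via the no-retraction theorem. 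Given that the paper already invokes Poincaré–Hopf and Euler characteristics freely, citing \cite{Hirsch1976} for the degree facts and the homotopy-extension characterization of null-homotopy is consistent with the surrounding level of detail, so the writeup should be short.
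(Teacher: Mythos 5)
Your argument is correct, and it shares the paper's key computation: the straight-line homotopy $g_t(\mathbf{u}) = (1-t)g(\mathbf{u}) + t\mathbf{u}$, which never vanishes on $S^{m-1}$ because $\langle g_t(\mathbf{u}),\mathbf{u}\rangle = (1-t)\langle g(\mathbf{u}),\mathbf{u}\rangle + t > 0$, so that $g/\abs{g}$ on $S^{m-1}$ has degree $1$. Where you diverge is in how ``nonzero degree forces an interior root'' is justified. The paper invokes the Jordan--Brouwer separation theorem: since $g$ is an imbedding, $g(S^{m-1})$ separates $\mathbb{R}^m$ with interior component exactly $g(\operatorname{int}(B^m))$, and $0$ lies in that component precisely when the winding number is nonzero --- this is the step that genuinely uses the imbedding hypothesis, which is the whole point of the lemma (the paper explicitly offers it as a proof ``with very different flavor'' available \emph{when} $g$ is an imbedding, alongside Lemmas \ref{lemma:PH} and \ref{lemma:gradient}). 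You instead argue by contradiction that a nowhere-vanishing $g$ on $B^m$ would make $g/\abs{g}$ a continuous extension over the ball of a degree-$1$ map $S^{m-1}\to S^{m-1}$, contradicting the fact that maps extending over $B^m$ are null-homotopic, hence of degree $0$. That route is sound and, as you note, never uses the imbedding hypothesis; in effect you have re-proved Lemma \ref{lemma:PH} by elementary degree theory rather than Poincar\'e--Hopf, which is more general but forgoes the separation-theorem argument the paper wanted to showcase. The only cosmetic caveat is the low-dimensional case $m=1$, where ``degree'' for $S^0$ needs a word or a direct intermediate-value argument, but the paper's own proof glosses over the same point.
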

\begin{proof}
By the Jordan-Brouwer separation theorem, \cite{Spanier1966} p. 198, $g(S^{m-1})$
separates $\mathbb{R}^m$ into an interior $g(\operatorname{int}(B^m))$,
and an exterior component.
We are done if we can show that 0 is in the interior component,
and by standard theory of the topological degree,
this is equivalent to showing that
the degree of the map $\mathbf{u} \mapsto g(\mathbf{u})/\abs{g(\mathbf{u})}$ is non-zero.
This map from $S^{m-1}$ to itself is well-defined because $g(\mathbf{u}) \ne 0$.
If $g(\cdot)$ is replaced by the identity the degree is 1,
and since the degree is a homotopy invariant we need only exhibit a homotopy from 
$\mathbf{u} \mapsto g(\mathbf{u})/\abs{g(\mathbf{u})}$ to the identity.
The obvious homotopy works.
Define $g_t(\mathbf{u}) := (1-t)g(\mathbf{u}) + t\mathbf{u}$, for $t \in [0,1]$.
Then
\[
\langle g_t(\mathbf{u}), \mathbf{u} \rangle ~=~ (1-t) \langle g(\mathbf{u}), \mathbf{u} \rangle + t  ~>~ 0
\hspace{10pt} \implies \hspace{10pt}
g_t(\mathbf{u})  \ne 0
\]
And therefore $\mathbf{u} \mapsto g_t(\mathbf{u})/\abs{g_t(\mathbf{u})}$ is well-defined
and gives the desired homotopy.
\end{proof}

\begin{theorem}\label{thm:diffeo}
The function $G_\sigma : \mathbb{R}^m \to \bodyint$ is a diffeomorphism.
\end{theorem}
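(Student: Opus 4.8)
The plan is to combine injectivity, which is already Theorem \ref{thm:injective}, with surjectivity onto $\bodyint$, and then invoke the inverse function theorem. First I would note that by Lemma \ref{lemma:PD} the Jacobian $\mathbf{D}G_\sigma$ is positive-definite, hence invertible, at every point of $\mathbb{R}^m$. Thus $G_\sigma$ is a local $C^1$ diffeomorphism, in particular an open map, so $G_\sigma(\mathbb{R}^m)$ is an open subset of $\bodyint$; together with injectivity, $G_\sigma$ is a homeomorphism of $\mathbb{R}^m$ onto that open image. Once we know the image is all of $\bodyint$, the inverse function theorem applied at each point shows the global inverse is $C^1$, and the proof is complete. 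So everything reduces to surjectivity onto $\bodyint$.

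For surjectivity I would fix $z_0 \in \bodyint$ and seek a zero of $g(\mathbf{t}) := G_\sigma(\mathbf{t}) - z_0$. Since $z_0$ is an interior point of the convex body $\body$, there is $\delta > 0$ with $z_0 + \delta \mathbf{u} \in \body$ for every $\mathbf{u} \in S^{m-1}$, and hence
\[
\langle z_0, \mathbf{u} \rangle \;\le\; \langle z_0 + \delta\mathbf{u}, \mathbf{u}\rangle - \delta \;\le\; \psi_\body(\mathbf{u}) - \delta \qquad \text{for all } \mathbf{u} \in S^{m-1}.
\]
By Lemma \ref{lemma:uniform} the functions $\mathbf{u} \mapsto \langle G_\sigma(n\mathbf{u}), \mathbf{u}\rangle$ increase to $\psi_\body$ \emph{uniformly} on $S^{m-1}$, so I can choose $R$ large enough that $\langle G_\sigma(R\mathbf{u}), \mathbf{u}\rangle > \psi_\body(\mathbf{u}) - \delta$ for all $\mathbf{u} \in S^{m-1}$. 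Combining the two estimates,
\[
\langle g(R\mathbf{u}), \mathbf{u}\rangle \;=\; \langle G_\sigma(R\mathbf{u}), \mathbf{u}\rangle - \langle z_0, \mathbf{u}\rangle \;\ge\; \langle G_\sigma(R\mathbf{u}), \mathbf{u}\rangle - \bigl(\psi_\body(\mathbf{u}) - \delta\bigr) \;>\; 0
\]
for every $\mathbf{u} \in S^{m-1}$. Now rescale: the map $\tilde g : B^m \to \mathbb{R}^m$ defined by $\tilde g(\mathbf{v}) := g(R\mathbf{v}) = G_\sigma(R\mathbf{v}) - z_0$ is continuous (indeed $C^1$), and is outward-pointing on $\partial B^m$ by the displayed inequality. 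Any one of Lemmas \ref{lemma:PH}, \ref{lemma:gradient}, \ref{lemma:degree} then produces a root $\mathbf{v}' \in \operatorname{int}(B^m)$, giving $G_\sigma(R\mathbf{v}') = z_0$. (For the gradient version of the argument one takes $f(\mathbf{v}) := \int_0^1 K(\langle R\mathbf{v}, \mathbf{w}(x)\rangle)\,dx - \langle R\mathbf{v}, z_0\rangle$ with $K(t) = \int_0^t \sigma(s)\,ds$, whose gradient is $R\,(G_\sigma(R\mathbf{v}) - z_0)$, as noted in the remark after Lemma \ref{lemma:PD}; for the embedding version one uses that $G_\sigma$ is already known to be an injective local diffeomorphism, hence an imbedding.) This establishes $G_\sigma(\mathbb{R}^m) = \bodyint$.

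Finally, having a $C^1$ bijection $G_\sigma : \mathbb{R}^m \to \bodyint$ whose derivative is invertible everywhere, the inverse function theorem gives a $C^1$ inverse locally at each point, and these agree with the global set-theoretic inverse, so $G_\sigma^{-1}$ is $C^1$ and $G_\sigma$ is a diffeomorphism. The one genuinely delicate point — the \emph{main obstacle} — is the uniform outward-pointing estimate on a large sphere; this is precisely what Lemma \ref{lemma:uniform} (Dini's theorem) buys us, once it is paired with the elementary observation that an interior point of a convex body is separated from every supporting hyperplane by a uniform gap $\delta$. Everything else is soft.
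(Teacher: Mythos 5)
Your proposal is correct and follows essentially the same route as the paper: injectivity from Theorem \ref{thm:injective}, surjectivity by using Lemma \ref{lemma:uniform} to make $\mathbf{t} \mapsto G_\sigma(R\mathbf{t}) - z_0$ outward-pointing on a large sphere and then invoking one of Lemmas \ref{lemma:PH}, \ref{lemma:gradient}, \ref{lemma:degree}, with smoothness of the inverse coming from the everywhere-invertible derivative of Lemma \ref{lemma:PD}. Your only additions are to make explicit two points the paper leaves implicit — the uniform gap $\delta$ between $\langle z_0,\mathbf{u}\rangle$ and $\psi_\body(\mathbf{u})$ coming from $z_0$ being an interior point, and the inverse function theorem step giving a $C^1$ inverse — both of which are sound.
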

\begin{proof}
$G_\sigma$ is injective by Theorem \ref{thm:injective}, so we must show it is surjective.
Let $z \in \bodyint$.
Then $ \langle z, \mathbf{u} \rangle < \psi_\body( \mathbf{u} )$, for all $\mathbf{u} \in S^{m-1}$.
By Lemma \ref{lemma:uniform} pick $n$ so large that
$\langle z, \mathbf{u} \rangle < \langle G_\sigma(n \mathbf{u}), \mathbf{u} \rangle$.
Define $g(\mathbf{t}) := G_\sigma(n \mathbf{t}) - z$, for $\mathbf{t} \in B^m$.
Then $g(\mathbf{t})$ satisfies the hypotheses of the three previous Lemmas
\ref{lemma:PH}, \ref{lemma:gradient}, and \ref{lemma:degree}.
To apply \ref{lemma:gradient} $g$ must be a gradient vector field; define
$f(\mathbf{t}) := n^{-1} \int_0^1 K( \langle n \mathbf{t} , \mathbf{w}(x) \rangle ) \, dx  - \langle \mathbf{t}, z \rangle$
and then $\operatorname{grad}(f) = g$.
So 
$g(\mathbf{t})=0$ has a root $\mathbf{t}'$.
Thus $G_\sigma(n \mathbf{t}') = z$ and we are done.
\end{proof}

In numerical work, when one must actually compute $G_\sigma^{-1}(z)$,
Lemma \ref{lemma:gradient} may be preferred, since it is usually more efficient
to find the minimum of a function of $m$ variables than an $m$-dimensional root.

To summarize, we have shown that for any $z \in \bodyint$ and for any squashing function $\sigma(t)$,
$\Lambda_\mathbf{w}f{=}z$ has a unique solution of the form
$f = \sigma(t_1 w_1 + \ldots + t_m w_m)$.   
Later we will see that for a certain specific $\sigma(t)$,
this solution $f$ is the centroid of \emph{all} solutions $Q^\infty \cap  \Lambda_\mathbf{w}^{-1}(z)$.
Denote the continuous map from $z \in \bodyint$ to $f \in \operatorname{int}(Q^\infty)$ by $\gamma$.
Then
\begin{equation}\label{eqn:inverse}
\text{the composition } \hspace{30pt} \bodyint ~~
{\overset{\gamma}\rightarrow} ~~
\operatorname{int}(Q^\infty) ~~
{\overset{\Lambda_\mathbf{w}}\twoheadrightarrow}~~
\bodyint \hspace{30pt}  \text{ is the identity on } \bodyint.
\end{equation}
So $\gamma$ is a right-inverse for $\Lambda_\mathbf{w}$.  
$\gamma$ is certainly not linear, but later we will see that with a reparametrization,
it can be made linear on the line segment from 0 to $\Lambda_\mathbf{w}(\mathbf{1}) = \int_0^1 \mathbf{w}(x) \, dx$.
After reparameterization, $\gamma$ maps this diagonal of the zonoid $\body$
to the line segment of constant functions - the diagonal of the cube $Q^\infty$.
A very different type of right-inverse is constructed in \cite{Samet1987};
that right-inverse takes values in the characteristic functions (the extreme points or ``vertices") of $Q^\infty$,
and it is defined on all of $\body$, including $\partial \body$.

\medskip
Now change the context of $\mathbf{w}$ from fixed to variable, 
and the context of $z$ from variable to fixed.
For later use in Section \ref{sec:discussion} observe that:
\begin{corollary}\label{cor:diffeo2}
Suppose that for a given $\mathbf{w}$ and $\mathbf{t}$
\[
\int_0^1 \sigma( \langle \mathbf{t},\mathbf{w}(x) \rangle ) \mathbf{w}(x) \, dx  ~=~ z_0 ~\in~ \bodyint
\]
If a perturbation of $\mathbf{w}$ is sufficiently small (in the $L^\infty$-norm),
then this equation still has a unique solution $\mathbf{t}$, and $\mathbf{t}$ has $C^1$-dependence on $\mathbf{w}$.
Thus $\sigma( \langle \mathbf{t},\mathbf{w} \rangle )$ also has $C^1$-dependence on $\mathbf{w}$.
\end{corollary}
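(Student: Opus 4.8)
My plan is to apply the implicit function theorem for maps between Banach spaces, taking the space of responsivities $(L^\infty[0,1])^m$ as the parameter space. Write $G_\sigma^{\mathbf{w}}$ for the map \ref{eqn:Gsigma} built from a responsivity $\mathbf{w}$, let $\mathbf{w}_0$ and $\mathbf{t}_0$ denote the given data, and define $\Phi : \mathbb{R}^m \times (L^\infty[0,1])^m \to \mathbb{R}^m$ by $\Phi(\mathbf{t},\mathbf{w}) := \int_0^1 \sigma(\langle \mathbf{t}, \mathbf{w}(x) \rangle)\, \mathbf{w}(x)\, dx - z_0 = G_\sigma^{\mathbf{w}}(\mathbf{t}) - z_0$, so that $\Phi(\mathbf{t}_0,\mathbf{w}_0) = 0$. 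First I would verify that $\Phi$ is $C^1$ near $(\mathbf{t}_0,\mathbf{w}_0)$: its partial in $\mathbf{t}$ is the $m{\times}m$ matrix $\mathbf{D}G_\sigma^{\mathbf{w}}(\mathbf{t})$ already exhibited in \ref{eqn:DG}, and its partial in $\mathbf{w}$, evaluated on a direction $\mathbf{v} \in (L^\infty[0,1])^m$, is $\int_0^1 \big( \sigma'(\langle \mathbf{t}, \mathbf{w}(x) \rangle)\, \langle \mathbf{t}, \mathbf{v}(x) \rangle\, \mathbf{w}(x) + \sigma(\langle \mathbf{t}, \mathbf{w}(x) \rangle)\, \mathbf{v}(x) \big)\, dx$. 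The Fr\'echet remainder bounds and the joint continuity of these partials reduce, via the pointwise mean value theorem, to the facts that $\sigma$ and $\sigma'$ are bounded on $\mathbb{R}$ and that $\sigma'$ is uniformly continuous on bounded intervals (the argument $\langle \mathbf{t}, \mathbf{w}(x) \rangle$ stays in a fixed bounded interval whenever $(\mathbf{t},\mathbf{w})$ stays bounded). This holds for any $C^1$ squashing function, and a fortiori for the analytic $\sigma$ of the Main Theorem.

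By Lemma \ref{lemma:PD} the matrix $\partial_{\mathbf{t}} \Phi(\mathbf{t}_0,\mathbf{w}_0) = \mathbf{D}G_\sigma^{\mathbf{w}_0}(\mathbf{t}_0)$ is positive-definite, hence invertible, so the implicit function theorem produces neighborhoods $V \ni \mathbf{t}_0$ in $\mathbb{R}^m$ and $W \ni \mathbf{w}_0$ in $(L^\infty[0,1])^m$ and a $C^1$ map $\beta : W \to V$ with $\Phi(\beta(\mathbf{w}),\mathbf{w}) = 0$, $\beta(\mathbf{w})$ being the only zero of $\Phi(\cdot,\mathbf{w})$ inside $V$. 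To upgrade this to the global uniqueness claimed, I would shrink $W$ so that for every $\mathbf{w} \in W$ two things hold: (i) $w_1,\dots,w_m$ stay linearly independent, since by \ref{thm:Gram} that is equivalent to positive-definiteness of the Gram matrix, an open condition; and (ii) $z_0$ stays in the interior $\bodyint_{\mathbf{w}}$ of the zonoid $\body_{\mathbf{w}} := \Lambda_{\mathbf{w}}(Q^\infty)$, which follows because the support functions satisfy $\abs{\psi_{\body_{\mathbf{w}}}(\mathbf{u}) - \psi_{\body_{\mathbf{w}_0}}(\mathbf{u})} \le \sqrt{m}\, \norm{\mathbf{w} - \mathbf{w}_0}_{\infty}$ uniformly over $\mathbf{u} \in S^{m-1}$, while $z_0 \in \bodyint_{\mathbf{w}_0}$ gives, by Theorem \ref{thm:maximizing}(b) and compactness of $S^{m-1}$, a uniform gap $\langle z_0, \mathbf{u} \rangle \le \psi_{\body_{\mathbf{w}_0}}(\mathbf{u}) - \varepsilon$, so that $\langle z_0, \mathbf{u} \rangle < \psi_{\body_{\mathbf{w}}}(\mathbf{u})$ for all $\mathbf{u}$. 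Then Theorem \ref{thm:diffeo} applies to the perturbed $\mathbf{w}$: $G_\sigma^{\mathbf{w}} : \mathbb{R}^m \to \bodyint_{\mathbf{w}}$ is a diffeomorphism, there is a unique $\mathbf{t}(\mathbf{w})$ with $G_\sigma^{\mathbf{w}}(\mathbf{t}(\mathbf{w})) = z_0$, and since $\beta(\mathbf{w})$ is such a solution we get $\mathbf{t}(\mathbf{w}) = \beta(\mathbf{w})$; hence the unique solution is $C^1$ in $\mathbf{w}$. For the last assertion, $(\mathbf{t},\mathbf{w}) \mapsto \sigma(\langle \mathbf{t}, \mathbf{w} \rangle)$ is a $C^1$ map $\mathbb{R}^m \times (L^\infty[0,1])^m \to L^\infty[0,1]$ by the same boundedness and uniform-continuity estimates on $\sigma$ and $\sigma'$, so $\mathbf{w} \mapsto \sigma(\langle \mathbf{t}(\mathbf{w}), \mathbf{w} \rangle)$ is $C^1$ as a composition.

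I expect the main obstacle to be the first step: carefully establishing the joint $C^1$ (Fr\'echet) differentiability of $\Phi$ on the infinite-dimensional Banach space $(L^\infty[0,1])^m$, in particular pinning down the correct remainder and continuity estimates for the partial derivative in the $\mathbf{w}$-direction. Everything afterwards is either a direct appeal to an already-proven result — Lemma \ref{lemma:PD}, Theorems \ref{thm:Gram}, \ref{thm:maximizing}, \ref{thm:diffeo} — or routine convexity, plus the ordinary finite-dimensional implicit function theorem in the variable $\mathbf{t}$.
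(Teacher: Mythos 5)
Your proposal is correct and follows essentially the same route as the paper: the paper's proof likewise defines $f(\mathbf{w},\mathbf{t}) := \int_0^1 \sigma(\langle \mathbf{t},\mathbf{w}(x)\rangle)\,\mathbf{w}(x)\,dx$, observes it is $C^1$ with the partial derivative in $\mathbf{t}$ invertible (via Lemma \ref{lemma:PD} and Theorem \ref{thm:diffeo}), and invokes the Banach-space implicit function theorem with $\mathbf{w} \in (L^\infty[0,1])^m$ as the parameter. Your extra verifications — the explicit $\mathbf{w}$-direction partial and the openness argument (preserved linear independence, $z_0$ staying interior to the perturbed zonoid) that upgrades the local IFT uniqueness to the global uniqueness claimed — are details the paper leaves implicit, not a different method.
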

\begin{proof} Define
$f( \mathbf{w}, \mathbf{t} ) := \int_0^1 \sigma( \langle \mathbf{t},\mathbf{w}(x) \rangle ) \mathbf{w}(x) \, dx$.
Then $f( \mathbf{w}, \mathbf{t} )$ is certainly $C^1$ and by Theorem \ref{thm:diffeo} the partial derivative $D_2 f$
at the given $(\mathbf{w},\mathbf{t})$ is an isomorphism of $\mathbb{R}^m$.
The result now follows from the implicit function theorem, Lang \cite{LangReal} p. 125.
\end{proof}


\emph{Example} for $m=1$.
Here $\body$ is trivial - a compact interval.
The endpoints of the interval are easily computed to be
$\int_0^1 \min(w_1(x),0)\,dx$ and $\int_0^1\max(w_1(x),0)\,dx$.

\emph{Example} for $m=2$, see Figure \ref{fig:diffeo}.
\begin{figure}[!ht]
    \centering
    \subfloat{\includegraphics[width=1.0\linewidth]{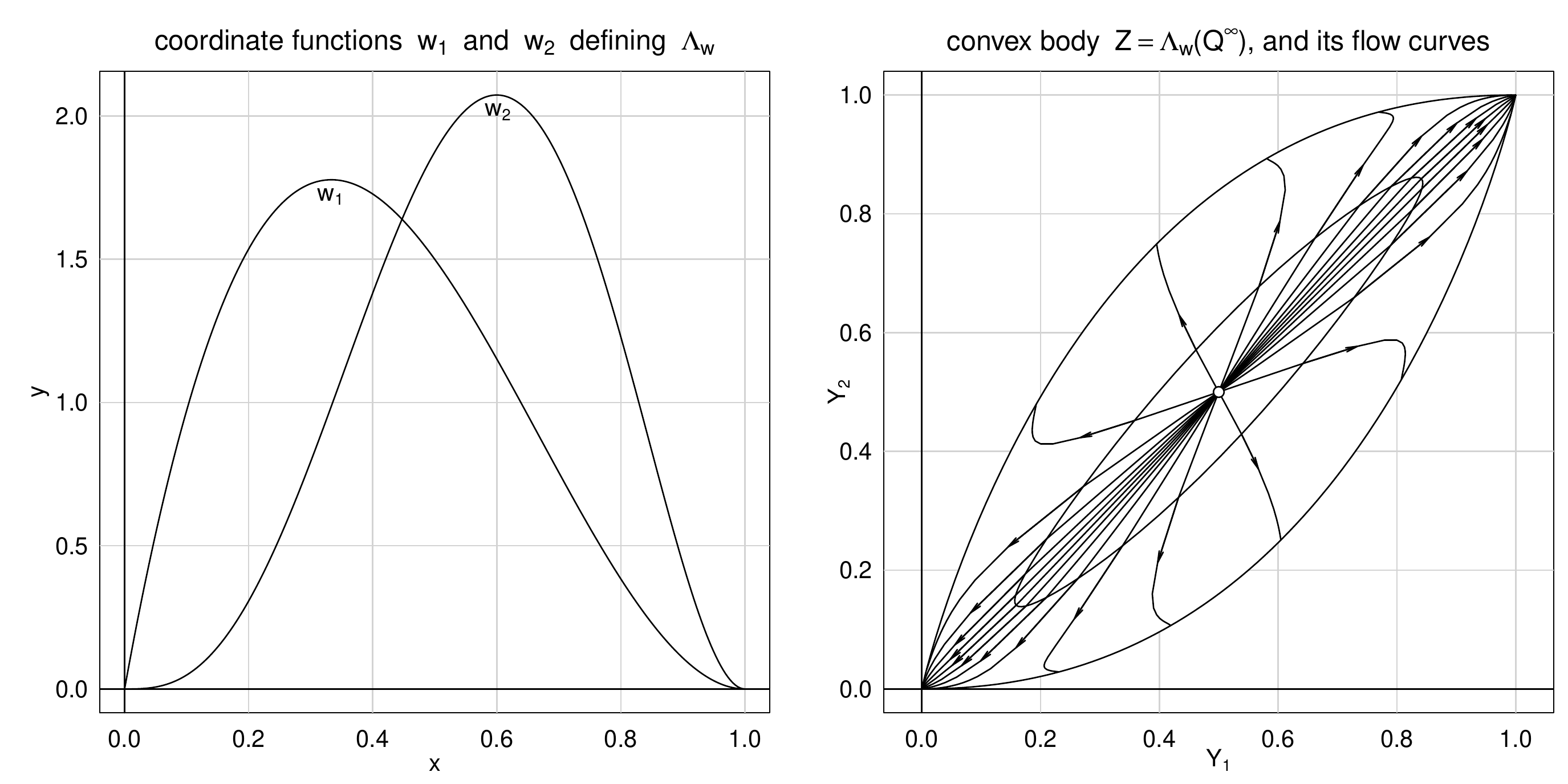}} 
    \caption{Example of $\body$ for $m{=}2$, where $Y_i = \langle w_i, f \rangle$.  
            The area under both $w_1$ and $w_2$ is $1$, and that is why the point $(1,1)$ is on $\partial \body$. 
            The flow curves depend on the choice of $\sigma$; in this example $\sigma(t):=(\tanh(t)+1)/2$.
            The flow has a single source at $\sigma(0)(1,1) = (1/2,1/2)$.}
    \label{fig:diffeo}
\end{figure}
In this example $\body$ is strictly convex, but in general $\body$ can have non-trivial faces or ``flat spots".
This $\body$ has 2 singular points, but in general $\body$ can have any number of them, including 0.
These features, i.e. non-trival faces and singular points, of $\partial \body$ are not pursued here.

The vector field on $\mathbb{R}^m$ given by $g(x) = x$ generates a flow whose integral curves are rays based at $0$.
$G_\sigma$ pushes the field forward to a field on $\bodyint$, 
and this extends to a field on all of $\body$ by setting it to $0$ on $\partial \body$.
The resulting flow on $\body$ has a source at $G_\sigma(0) = \sigma(0)\int_0^1 \mathbf{w}(x) \, dx$.
All other points in $\bodyint$ flow to $\partial \body$, see Figure \ref{fig:diffeo}.

\section{Reduction from $Q^\infty$ to the Finite-Dimensional Cube $Q^n$}\label{sec:reduction}

In this section we reduce the centroid problem from $Q^\infty$ to $Q^n$
using the subspaces $V_n \subset L^\infty[0,1]$
Recall the definition of subspace $V_n$ from Section \ref{sec:nets}.
Define $I_{j,n} := [(j-1)/n,j/n], j=1\ldots n$.
The $n$ indicator functions of $I_{j,n}$ are a standard basis for $V_n$,
and the vectors $\mathsf{e}_i$ are a standard basis of $\mathbb{R}^n$.
There is a canonical isomorphism $\mathbb{R}^n \leftrightarrow V_n$ that preserves the $\infty$-norm,
and this defines a natural inclusion $Q^n \hookrightarrow Q^\infty$.
There is natural identification  $Q^n \simeq Q^\infty \cap V_n$.
There is natural projection $P_n : L^1[0,1]  \to V_n$.
For $f \in L^1[0,1]$ and for $x \in I_{j,n}$
\begin{equation*}
 P_n(f)(x) := n \int_{I_{j,n}} f(x)\,dx  = \text{the average of } f \text{ on } I_{j,n}
\end{equation*}

\smallskip
\begin{center}
\begin{tikzpicture}[scale=1.5]
\node (A) at (0,0.75) {$Q^n$};
\node (B) at (1,0.75) {$\mathbb{R}^n$};
\node (C) at (2,0.75) {$V_n$};
\node (D) at (0,0) {$Q^\infty$};
\node (E) at (1,0) {$\subset$};
\node (F) at (2,0) {$L^\infty[0,1]$};
\node (G) at (0.5,0.75) {$\subset$};
\node (H) at (2,0.375) {$\cap$};
\node (I) at (2.75,0) {$\subset$};
\node (J) at (3.4,0) {$L^1[0,1]$};
\node (K) at (4.75,0) {$\mathbb{R}^m$};
\draw [right hook->] (A) to (D);    
\draw [<->] (B) to (C);    
\draw [->>] (J) to  node[above]{$\Lambda_\mathbf{w}$} (K);
\draw [->>] (J) to  node[above]{$P_n$} (C);
\end{tikzpicture}
\end{center}
\medskip

\begin{theorem}\label{thm:projection}
For the projection $P_n$ and $f,g \in L^1[0,1]$
\begin{enumerate}[label=(\alph*)]
\item $\langle f, P_n(g) \rangle_{1,\infty} ~=~ \langle P_n(f), g \rangle_{\infty,1}$
\item $P_n(f) \to f$ in $L^1[0,1]$, as $n \to \infty$
\end{enumerate}
\end{theorem}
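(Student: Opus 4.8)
\medskip
\noindent\emph{Proof sketch.}
The plan is to get part (a) from a one-line computation and part (b) from the standard ``uniform bound plus density'' argument.

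For (a), I would first record the elementary description of $P_n$: it is idempotent, its range $V_n$ is exactly the functions that are constant on each $I_{j,n}$, and a function $h\in L^1[0,1]$ lies in $\ker P_n$ iff $\int_{I_{j,n}}h=0$ for every $j$ (because $P_n(h)$ is the constant $n\int_{I_{j,n}}h$ on $I_{j,n}$; in particular $\int_{I_{j,n}}(f-P_n f)=0$ always). Given $f,g\in L^1[0,1]$, let $c_j:=n\int_{I_{j,n}}g$ be the constant value of $P_n(g)$ on $I_{j,n}$; then
\[
\langle f-P_n(f),\,P_n(g)\rangle_{1,\infty} \;=\; \sum_{j=1}^n c_j\int_{I_{j,n}}\bigl(f-P_n(f)\bigr) \;=\; 0,
\]
so $\langle f,P_n(g)\rangle_{1,\infty}=\langle P_n(f),P_n(g)\rangle$. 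The right-hand side equals $\sum_{j=1}^n n\bigl(\int_{I_{j,n}}f\bigr)\bigl(\int_{I_{j,n}}g\bigr)$, which is symmetric in $f$ and $g$; running the same computation with the roles of $f$ and $g$ interchanged gives $\langle P_n(f),g\rangle_{\infty,1}=\langle P_n(f),P_n(g)\rangle$ as well, and (a) follows.

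For (b) I would use two ingredients. The first is the uniform operator bound $\|P_n f\|_1\le\|f\|_1$ for all $n$ and all $f$, which comes from
\[
\|P_n(f)\|_1 \;=\; \sum_{j=1}^n\Bigl|\int_{I_{j,n}}f\Bigr| \;\le\; \sum_{j=1}^n\int_{I_{j,n}}|f| \;=\; \|f\|_1.
\]
The second is convergence on the dense subspace $C[0,1]\subset L^1[0,1]$: if $g$ is continuous then for $x\in I_{j,n}$,
\[
|P_n(g)(x)-g(x)| \;=\; \Bigl|\,n\!\int_{I_{j,n}}\bigl(g(t)-g(x)\bigr)\,dt\,\Bigr| \;\le\; \omega_g(1/n),
\]
where $\omega_g$ is the modulus of continuity of $g$; uniform continuity gives $\omega_g(1/n)\to 0$, so $\|P_n(g)-g\|_\infty\to 0$ and a fortiori $\|P_n(g)-g\|_1\to 0$. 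Then for general $f\in L^1[0,1]$ and $\varepsilon>0$, pick continuous $g$ with $\|f-g\|_1<\varepsilon$ and estimate
\[
\|P_n(f)-f\|_1 \;\le\; \|P_n(f-g)\|_1+\|P_n(g)-g\|_1+\|g-f\|_1 \;\le\; 2\|f-g\|_1+\|P_n(g)-g\|_1,
\]
which is below $3\varepsilon$ once $n$ is large. This proves (b).

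I do not expect a genuine obstacle here: (a) is pure bookkeeping and (b) is the textbook approximate-identity argument. The only point needing a word of care is that the partitions $\{I_{j,n}\}_j$ are \emph{not} nested as $n$ ranges over all of $\mathbb{N}$ (they are nested only when $n\mid m$), but this is irrelevant for (b), since the mesh $1/n\to 0$ unconditionally and neither ingredient above uses monotonicity. (One could alternatively deduce (b) from the martingale convergence theorem along a multiplicatively cofinal subsequence and then upgrade, but the direct argument is cleaner and needs no subsequence.)
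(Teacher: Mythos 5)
Your proposal is correct. Part (a) is essentially the paper's own argument: the paper simply notes that both sides work out to the same symmetric sum $\sum_j n\bigl(\int_{I_{j,n}}f\bigr)\bigl(\int_{I_{j,n}}g\bigr)$, which is exactly what your computation via $\langle f-P_n(f),P_n(g)\rangle=0$ establishes, just packaged through the kernel/range description of $P_n$. Part (b) is where you genuinely diverge. The paper argues pointwise: since the intervals $I_{j,n}$ containing a fixed $x$ shrink nicely to $x$, Lebesgue's Differentiation Theorem gives $P_n(f)(x)\to f(x)$ a.e., and it then passes to $L^1$ convergence by a dominated-convergence step. You instead use the uniform bound $\norm{P_n}_{L^1\to L^1}\le 1$ together with uniform convergence on the dense subspace $C[0,1]$ and the standard $\varepsilon/3$ estimate. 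Each route buys something: the paper's gives the a.e.\ pointwise statement as a byproduct, which is the same kind of shrinking-nicely argument reused later in the proof of the Main Theorem (where $w_{j_n,n}\to\mathbf{w}(x_0)$ a.e.); yours is elementary and self-contained, and it sidesteps the one delicate point in the paper's version, namely that the local average $P_n(f)$ is \emph{not} pointwise dominated by $\abs{f}$ in general (a rigorous domination would require the maximal function or a uniform-integrability/Vitali argument), so your density argument is arguably the cleaner way to reach the $L^1$ conclusion. Your closing remark that the partitions are not nested, and that this is irrelevant because only the mesh $1/n\to 0$ matters, is also accurate.
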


\begin{proof}
For $(a)$ both sides work out to be $\sum_j \left[ n \int_{I_{j,n}} f(x)\,dx \right] \left[ n \int_{I_{j,n}} g(x)\,dx \right]$.
For $(b)$ abbreviate $f_n := P_n(f)$.
For a fixed $x$ the intervals $I_{j,n}$ that contain $x$ ``shrink nicely" to $x$ as $n \to \infty$,
and so by Lebesgue's Differentiation Theorem $f_n(x)$ converges pointwise to $f(x)$ a.e., see \cite{Rudin} p. 168.
Since $f_n$ is dominated by $\abs{f}$, $f_n \to f$ in $L^1[0,1]$,
by Lebesgue's Dominated Convergence Theorem, \cite{Rudin} p. 27.
\end{proof}

Property $(b)$ implies that $\bigcup V_n$ is dense in $L^1[0,1]$,
and so the profile-gauge collection $\mathapprox{P}$ := $\{ V_n \}$ is a directed filtration of $L^1[0,1]$.
In what follows, $\operatorname{int}(Q^n)$ means the interior considering
$Q^n \subset V_n$ (or equivalently $Q^n \subset \mathbb{R}^n$),
and \emph{not} $Q^n$ as a subset of the larger function spaces.
As an example of this usage, note that $P_n( \operatorname{int}(Q^\infty) ) = \operatorname{int}(Q^n)$.

\begin{theorem}\label{thm:reduction1}
If $n$ is sufficiently large, $\Lambda_\mathbf{w}(Q^n)$ is a convex body in $\mathbb{R}^m$, and
$\operatorname{int}(\Lambda_\mathbf{w}(Q^n)) = \Lambda_\mathbf{w}(\operatorname{int}(Q^n))$.
\end{theorem}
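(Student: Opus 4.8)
\section*{Proof proposal for Theorem~\ref{thm:reduction1}}

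The plan is to deduce the whole statement from Theorem~\ref{thm:interior}, once one knows that the restriction $\Lambda_\mathbf{w}|_{V_n}\colon V_n\to\mathbb{R}^m$ is surjective for all large $n$. Convexity and compactness of $\Lambda_\mathbf{w}(Q^n)$ come for free: under the identification $Q^n\simeq Q^\infty\cap V_n$ the cube $Q^n$ is convex and compact, and $\Lambda_\mathbf{w}$ is linear and continuous, so $\Lambda_\mathbf{w}(Q^n)$ is a compact convex subset of $\mathbb{R}^m$. The only issue is the non-empty interior and the identity $\operatorname{int}(\Lambda_\mathbf{w}(Q^n))=\Lambda_\mathbf{w}(\operatorname{int}(Q^n))$. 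Now $Q^n$ is a convex subset of the finite-dimensional (hence Banach) space $V_n$ with non-empty interior in $V_n$ (it contains the constant function $1/2$), so if $\Lambda_\mathbf{w}|_{V_n}$ is linear, continuous, and surjective onto $\mathbb{R}^m$, then Theorem~\ref{thm:interior} applies with $X=V_n$, $Y=\mathbb{R}^m$, $A=Q^n$ and yields both conclusions at once; together with compactness this says $\Lambda_\mathbf{w}(Q^n)$ is a convex body.

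So the real work is to show $\Lambda_\mathbf{w}|_{V_n}$ is surjective for $n$ sufficiently large (continuity being automatic on a finite-dimensional space). For $v\in V_n$ we have $P_n(v)=v$, so Theorem~\ref{thm:projection}(a) gives $\langle v,w_j\rangle_{1,\infty}=\langle v,P_n(w_j)\rangle_{1,\infty}$ for each $j$; hence $\Lambda_\mathbf{w}|_{V_n}$ is the map $v\mapsto(\langle v,P_n(w_1)\rangle,\dots,\langle v,P_n(w_m)\rangle)$ on the finite-dimensional inner-product space $V_n$, which is surjective exactly when $P_n(w_1),\dots,P_n(w_m)$ are linearly independent --- equivalently, by the Gram-matrix criterion in Theorem~\ref{thm:Gram}, when the matrix $G^{(n)}_{ij}:=\langle P_n(w_i),P_n(w_j)\rangle$ is positive-definite.

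Finally I would show $G^{(n)}\to G$ entrywise, where $G_{ij}=\langle w_i,w_j\rangle$ is the Gram matrix of the $w_j$, which is positive-definite by our standing assumption together with Theorem~\ref{thm:Gram}. Using Theorem~\ref{thm:projection}(a) and the idempotence of $P_n$, one gets $G^{(n)}_{ij}=\langle P_n(w_i),P_n(w_j)\rangle=\langle w_i,P_n(w_j)\rangle$; since $w_i\in L^\infty$ while $P_n(w_j)\to w_j$ in $L^1$ by Theorem~\ref{thm:projection}(b), the estimate $\abs{\langle w_i,P_n(w_j)-w_j\rangle}\le\norm{w_i}_\infty\norm{P_n(w_j)-w_j}_1$ forces $G^{(n)}_{ij}\to G_{ij}$. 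Positive-definiteness of a symmetric matrix is an open condition, so there is an $N$ with $G^{(n)}$ positive-definite for all $n\ge N$; for those $n$ the map $\Lambda_\mathbf{w}|_{V_n}$ is surjective and the first paragraph completes the proof.

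The main obstacle is exactly this surjectivity claim; everything else is bookkeeping with results already established. It is worth noting that ``sufficiently large'' cannot be dropped: for $n<m$ we have $\dim V_n<m$, and even for $n\ge m$ the averaged functions $P_n(w_j)$ can fail to be independent when the partition is poorly aligned with the $w_j$, so the statement genuinely holds only eventually in $n$.
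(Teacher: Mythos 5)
Your proposal is correct and follows essentially the same route as the paper's proof: reduce everything to surjectivity of $\Lambda_\mathbf{w}$ restricted to $V_n$ (so that Theorem~\ref{thm:interior} applies), convert that via Theorem~\ref{thm:projection}(a) into positive-definiteness of the Gram matrix $\langle P_n(w_i),P_n(w_j)\rangle$, and conclude for large $n$ from $P_n(w_j)\to w_j$ and the positive-definiteness of $\langle w_i,w_j\rangle$. Your only addition is to spell out the convergence estimate $\abs{\langle w_i,P_n(w_j)-w_j\rangle}\le\norm{w_i}_\infty\norm{P_n(w_j)-w_j}_1$, which the paper leaves as ``follows at once.''
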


\begin{proof}
The fact that $\Lambda_\mathbf{w}(Q^n)$ is compact and convex is trivial.
The rest follows immediately from \ref{thm:interior}
if we can show that the restriction of $\Lambda_\mathbf{w}$ to $V_n$ is surjective.
Consider the map $\Lambda_{P_n(\mathbf{w})}$ obtained from $\Lambda_{\mathbf{w}}$ by replacing each $w_j$ by $P_n(w_j)$.
We claim that these two mappings have the same range.
In fact this follows trivially from part $(a)$ of \ref{thm:projection}.
Now to show that $\Lambda_{P_n(\mathbf{w})}$ is surjective
then by \ref{thm:Gram} we must show that the Gram matrix
$G_{ij}^n := \langle P_n(w_i), P_n(w_j) \rangle$ is positive-definite for sufficiently large $n$.
But this follows at once because we know that $G_{ij} := \langle w_i, w_j \rangle$ is positive-definite
and $P_n(w_i) \to w_i$ as $n \to \infty$.
\end{proof}

\emph{Remark.}
Since $\Lambda_\mathbf{w}(Q^n)$ is the linear image of the cube $Q^n$,
$\Lambda_\mathbf{w}(Q^n)$ is a zonotope, though we do not need this fact.
Recall from the previous section that $Z := \Lambda_\mathbf{w}(Q^\infty)$ is a zonoid.

\begin{theorem}\label{thm:union}
For any $z \in \operatorname{int}(\Lambda_\mathbf{w}(Q^\infty))$,
$z \in \Lambda_\mathbf{w}(\operatorname{int}(Q^n))$ for $n$ sufficiently large.
\end{theorem}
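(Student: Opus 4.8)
The plan is to exhibit, for $n$ large, a small non-degenerate simplex $\Delta_n \subseteq \Lambda_\mathbf{w}(Q^n)$ that contains $z$ in its interior; since $\Lambda_\mathbf{w}(Q^n) \subseteq \Lambda_\mathbf{w}(Q^\infty) = \body$ through the inclusion $Q^n \simeq Q^\infty \cap V_n \hookrightarrow Q^\infty$, the vertices of such a simplex can be produced by approximating, with the averaging projections $P_n$, a few preimages of points near $z$.

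First I would fix $\varepsilon > 0$ so small that $z + B_\varepsilon \subseteq \body$, and choose unit vectors $\mathbf{u}_0, \ldots, \mathbf{u}_m \in S^{m-1}$ forming the vertices of a regular simplex centered at the origin, so that $\sum_{i} \mathbf{u}_i = 0$. Setting $v_i := z + \tfrac{\varepsilon}{2}\mathbf{u}_i$, each $v_i$ lies in $\body$, the points $v_0, \ldots, v_m$ are affinely independent, and $z = \tfrac{1}{m+1}\sum_i v_i$ is the centroid — hence an interior point — of the simplex $\Delta := \operatorname{conv}(v_0, \ldots, v_m)$. Then pick $g_i \in Q^\infty$ with $\Lambda_\mathbf{w}(g_i) = v_i$.

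Next I would set $g_i^{(n)} := P_n(g_i)$ and $v_i^{(n)} := \Lambda_\mathbf{w}(g_i^{(n)})$. Averaging over the intervals $I_{j,n}$ keeps values in $[0,1]$, so $g_i^{(n)} \in Q^\infty \cap V_n \simeq Q^n$ and therefore $v_i^{(n)} \in \Lambda_\mathbf{w}(Q^n)$. By Theorem \ref{thm:projection}(b), $g_i^{(n)} \to g_i$ in $L^1[0,1]$, and by continuity of $\Lambda_\mathbf{w}$, $v_i^{(n)} \to v_i$ in $\mathbb{R}^m$ for each $i$. Since $\Lambda_\mathbf{w}(Q^n)$ is convex and contains all the $v_i^{(n)}$, it contains $\Delta_n := \operatorname{conv}(v_0^{(n)}, \ldots, v_m^{(n)})$. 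For $n$ large the $v_i^{(n)}$ remain affinely independent, and the barycentric coordinates of $z$ relative to $(v_i^{(n)})_i$ — affine functions of the vertex positions — converge to those of $z$ relative to $(v_i)_i$, which are all strictly positive; hence $z \in \operatorname{int}(\Delta_n) \subseteq \operatorname{int}(\Lambda_\mathbf{w}(Q^n))$ once $n$ is large. Enlarging $n$ further so that Theorem \ref{thm:reduction1} applies, $\operatorname{int}(\Lambda_\mathbf{w}(Q^n)) = \Lambda_\mathbf{w}(\operatorname{int}(Q^n))$, and the conclusion follows; note that no divisibility hypothesis on $n$ enters, since both Theorems \ref{thm:projection} and \ref{thm:reduction1} hold for all sufficiently large $n$.

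The main obstacle is the final topological step — deducing $z \in \operatorname{int}(\Delta_n)$ from the mere convergence $v_i^{(n)} \to v_i$. This is where one must be careful: it rests on the fact that barycentric coordinates (equivalently, the convex hull in the Hausdorff metric) depend continuously on the vertices as long as these remain affinely independent, so that the strict positivity of $z$'s barycentric coordinates with respect to $\Delta$ persists for $\Delta_n$. Everything else — the stability of $Q^\infty$ under $P_n$, the $L^1$-convergence, and the convexity bookkeeping — is routine.
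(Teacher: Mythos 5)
Your proof is correct, but it takes a different route from the paper's. The paper's own argument uses only \emph{one} approximated preimage together with the line segment principle (Theorem \ref{thm:segment}): it first invokes Theorem \ref{thm:reduction1} to get some interior point $z_0$ of $\Lambda_\mathbf{w}(Q^n)$, extends the segment $[z_0,z]$ slightly beyond $z$ to a point $\widehat{z}\in\mathring{\body}$, picks $f\in\operatorname{int}(Q^\infty)$ with $\Lambda_\mathbf{w}(f)=\widehat{z}$, notes that $P_n(f)\in\operatorname{int}(Q^n)$ and $\Lambda_\mathbf{w}(P_n f)\to\widehat{z}$, and then applies the segment principle to the convex set $\Lambda_\mathbf{w}(\operatorname{int}(Q^n))$ to capture $z\in[z_0,\widehat{z})$. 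You instead surround $z$ in all directions by a small simplex $\Delta\subset\body$, approximate all $m+1$ vertex preimages by $P_n$, and use stability of affine independence and of barycentric coordinates to keep $z$ in the interior of the perturbed simplex $\Delta_n\subseteq\Lambda_\mathbf{w}(Q^n)$; that stability step is indeed standard (Cramer's rule: the coordinates are ratios of determinants depending continuously on the vertices, and the denominator converges to a nonzero value), so there is no gap. The trade-off: the paper's proof needs only a single $P_n$-approximation and leans on an already-proved convexity lemma, while yours costs $m+1$ approximations but is more symmetric, makes the quantitative ``persistence of interiority'' explicit, and postpones Theorem \ref{thm:reduction1} to the very last line (only to convert $\operatorname{int}(\Lambda_\mathbf{w}(Q^n))$ into $\Lambda_\mathbf{w}(\operatorname{int}(Q^n))$). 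A small simplification available to you: choose the $g_i$ in $\operatorname{int}(Q^\infty)$ (possible by Theorem \ref{thm:body}), so that $P_n(g_i)\in\operatorname{int}(Q^n)$ and $\Delta_n\subseteq\Lambda_\mathbf{w}(\operatorname{int}(Q^n))$ directly, which removes even that final appeal to the interior identity.
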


\begin{proof}
First select $n$ so large that $\Lambda_\mathbf{w}(Q^n)$ has an interior, by \ref{thm:reduction1}.
Let $z_0$ be such an interior point.
If the given $z = z_0$ then we are done.
Otherwise, since $z$ is an interior point of $\body$, 
we can take the segment $[z_0,z]$ and extend it slightly past $z$ to $\widehat{z}$, 
where $\widehat{z}$ is also an interior point of $\body$.
By \ref{thm:interior} pick $f \in \mathrm{int}(Q^\infty)$ so that $\Lambda_\mathbf{w}(f) = \widehat{z}$.
Define $f_n := P_n(f)$ and note that $f_n \in \mathrm{int}(Q^n)$.
Define $z_n := \Lambda_\mathbf{w}(f_n)$ and note that $z_n \in \Lambda_\mathbf{w}(\mathrm{int}(Q^n))$.
By \ref{thm:projection}, $z_n \to \widehat{z}$ and thus $\widehat{z} \in \operatorname{cl}(\Lambda_\mathbf{w}(\mathrm{int}(Q^n)))$.
By construction $z_0 \in \mathrm{int}(\Lambda_\mathbf{w}(Q^n)) =  \mathrm{int}(\Lambda_\mathbf{w}(\mathrm{int}(Q^n)))$.
Since $z \in [z_0,\widehat{z})$ we can apply \ref{thm:segment} to conclude that
$z \in \mathrm{int}(\Lambda_\mathbf{w}(\mathrm{int}(Q^n))) = \Lambda_\mathbf{w}(\operatorname{int}(Q^n))$.
\end{proof}

The previous theorem can be interpreted as follows.
If $z \in \operatorname{int}(\body)$ then for sufficiently large $n$ the affine subspace
$\Lambda_\mathbf{w}^{-1}(z)$ intersects the interior of $Q^n$.
It is straightforward to show that in $\mathbb{R}^n$ that subspace is given by
\begin{equation}\label{eqn:Wmat}
W x = z  \hspace{10pt} \text{where }  \hspace{10pt}  W := n^{-1} [ (\iota \circ P_n)(w_1), \ldots, (\iota \circ P_n)(w_m) ]^T
\end{equation}
and where $\iota : V_n \to \mathbb{R}^n$ is the canonical isomorphism.
Row $i$ of $W$ is the projection of $w_i$ onto $V_n$ divided by $n$.
The initial data $\mathbf{w} : [0,1] \to \mathbb{R}^m$ is projected down to an $m\times n$ matrix $W$.
Since the restriction of $\Lambda_\mathbf{w}$ to $V_n$ is surjective,
the solution set $\Lambda_\mathbf{w}^{-1}(z) \cap Q^n$
is a polyhedron of dimension $n-m$.
In the sections below we compute an asymptotic expression
for the centroids of these polyhedra and show that, under certain conditions on $\mathbf{w}$,
these centroids converge to 
$\centroid_\mathapprox{P}( \Lambda_\mathbf{w}^{-1}(z) \cap Q^\infty)$, as $n \to \infty$.
The previous theorem shows that our set and $\mathapprox{P}$ satisfy the above requirement \ref{req:AA}.

\section{The Bilateral Laplace Transform of Volumes of Sections}\label{sec:Volumes}

In this section we consider the problem of computing the $k$-dimensional volume
of the intersection of the unit cube $Q^n := [0,1]^n$ with a $k$-dimensional affine subspace of $\mathbb{R}^n$.
We start with a linear subspace and parameterize the family of all affine subspaces that are parallel to it.
The volume of the intersection is a function of the parameters,
and the Bilateral Laplace Transform of this function has a quite simple form.

Let $\{\wvec_1,\ldots,\wvec_m\}$ be a linearly independent set in $\mathbb{R}^n$.
Let $W := [\wvec_1,\ldots,\wvec_m]^T$ be the full-rank $m\times n$ matrix whose $i$'th row is $\wvec_i$.
The case we have in mind is $\wvec_i = n^{-1}(\iota \circ P_n)(w_i)$, from equation \ref{eqn:Wmat} in the previous section.
For $y \in \mathbb{R}^m$ let $H_y$ := $\{x ~|~ Wx=y\}$.
As $y$ varies over $\mathbb{R}^m$ the affine subspaces $H_y$ sweep out all of $\mathbb{R}^n$.
The dimension of $H_y$ is $k:=n-m$ and there is a $k$-dimensional 
volume measure $\mu_k$ on $H_y$ defined by an isometry with $\mathbb{R}^k$.
An integral expression for the volume we want to compute is
\begin{equation}\label{volumeintegral}
\mu_k(H_y \cap Q^n) = \int_{H_y} \rho(x_1) \cdots \rho(x_n) \, d \mu_k  ~~~~~~~\text{where}~  \rho := \mathbf{1}_{[0,1]}
\end{equation}
and this is the route we will follow.
Since the $\wvec_i$ are linearly independent, the exterior product $\wvec_1 \wedge \cdots \wedge \wvec_m$ is non-zero.
Its length $\norm{\wvec_1 \wedge \cdots \wedge \wvec_m}$ is the $m$-volume of the parallelpiped spanned by the vectors,
and is the product of the singular values of $W$.
Since $W$ is full rank this product is non-zero.
We also have
\[
\norm{\wvec_1 \wedge \cdots \wedge \wvec_m} = \sqrt{\det(W W^T)} \hspace{40pt} \text{(} W W^T   \text{ is the \emph{Gram matrix} of } W \text{)}
\]

It is useful to investigate a more general integral.
Let $f_j : \mathbb{R} \to \mathbb{R}, j=1,\ldots,n$ be measurable functions and let
\begin{equation}\label{weightedconvolution}
g(y) ~~:=~~ \int_{H_y} f_1(x_1) \cdots f_n(x_n) \, d \mu_k   \hspace{40pt}  y \in \mathbb{R}^m
\end{equation}

The tool we use to explore $g(y)$ is the \emph{Bilateral Laplace Transform}.
Here are a few basic facts; for details in the univariate case see \cite{Widder} and for the multivariate case see \cite{brychkov1992}.
For any real-valued function of $m$ real variables $f(t) = f(t_1,\ldots,t_m)$ the Bilateral Laplace Transform $\mathcal{B}\{f\}$
is a complex-valued function of $m$ complex variables $s=(s_1,\ldots,s_m)$, and is defined by
\begin{equation}\label{eqn:BLT}
F(s) := \mathcal{B}(f)(s) := \int_{\mathbb{R}^m} \exp( -\langle s, t \rangle ) ~ f(t) \, dt
\end{equation}
where $\langle s, t \rangle := s_1 t_1 + \cdots + s_m t_m$ is the scalar product of a complex $m$-vector and a real $m$-vector.
The \emph{original} function $f(t)$ is commonly denoted in a lower-case letter,
and the \emph{transformed} function $F(s)$ in the corresponding upper-case letter.
In general, the region of convergence is an $m$-fold product of vertical strips in the complex plane.
If $f(t)$ has compact support, then $F(s)$ is defined everywhere, see \cite{brychkov1992} page 156.
If $f = \mathbf{1}_{[0,1]}$, as in expression \ref{volumeintegral} for the volume of a section of the cube $Q^n$, then 
$\mathcal{B} \{ \mathbf{1}_{[0,1]} \} = (1 - \exp(-s))/s$,
by an easy calculation.

\begin{theorem}\label{thm:BLT}
If $g(y)$ is defined by equation \ref{weightedconvolution} and $F_i := \mathcal{B}\{f_i\}$, then
\[
G(s) :=  \mathcal{B}\{g\}(s) = \norm{\wvec_1 \wedge \cdots \wedge \wvec_m} \prod_{i=1}^n F_i( (W^Ts)_i )
\]
where $(W^Ts)_i$ is the $i$'th component of the vector $W^Ts$.
\end{theorem}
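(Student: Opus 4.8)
The plan is to reduce the claim to a single linear change of variables that decouples the $n$ coordinates $x_1,\dots,x_n$. Set $\Phi(x):=f_1(x_1)\cdots f_n(x_n)$ and $k:=n-m$. First I would pick an orthonormal basis $u_1,\dots,u_n$ of $\mathbb{R}^n$ so that $u_1,\dots,u_k$ span $\ker W$ and $u_{k+1},\dots,u_n$ span $(\ker W)^\perp$, and let $U=[\,u_1\,|\cdots|\,u_n\,]$ be the resulting orthogonal matrix. Writing $x=U\xi$ with $\xi=(\zeta,\eta)\in\mathbb{R}^k\times\mathbb{R}^m$, one has $Wx=WU\xi=B\eta$, where $B:=[\,Wu_{k+1}\,|\cdots|\,Wu_n\,]$ is $m\times m$; since $W$ is full rank, $(\ker W)^\perp$ has dimension $m$ and $W$ restricts to an isomorphism of it onto $\mathbb{R}^m$, so $B$ is invertible. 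Consequently $H_y$ is the image under the isometry $\zeta\mapsto U(\zeta,B^{-1}y)$ of $\mathbb{R}^k$, and the $k$-volume measure $\mu_k$ on $H_y$ (defined by an isometry with $\mathbb{R}^k$) is exactly the pushforward of Lebesgue measure $d\zeta$; hence $g(y)=\int_{\mathbb{R}^k}\Phi\bigl(U(\zeta,B^{-1}y)\bigr)\,d\zeta$.

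Next I would substitute this into $G(s)=\int_{\mathbb{R}^m}e^{-\langle s,y\rangle}g(y)\,dy$ and change variables $y=B\eta$, $dy=\abs{\det B}\,d\eta$. Since $\langle s,B\eta\rangle=\langle B^Ts,\eta\rangle$, and since $(WU)^T$ has first $k$ rows zero and last $m$ rows equal to $B^T$ (so that the $\mathbb{R}^n$-vector $(0,B^Ts)$ equals $U^TW^Ts$ and $\langle(0,B^Ts),\xi\rangle=\langle W^Ts,U\xi\rangle$), the iterated integral over $\mathbb{R}^k\times\mathbb{R}^m$ recombines, after the measure-preserving substitution $x=U\xi$, into
\[
G(s)=\abs{\det B}\int_{\mathbb{R}^n}e^{-\langle W^Ts,\,x\rangle}\,\Phi(x)\,dx .
\]
Because $\langle W^Ts,x\rangle=\sum_{i=1}^n(W^Ts)_i x_i$ and $\Phi$ is a product of functions of the individual coordinates, Fubini's theorem factors this into $\prod_{i=1}^n\int_{\mathbb{R}}e^{-(W^Ts)_i x_i}f_i(x_i)\,dx_i=\prod_{i=1}^n F_i\bigl((W^Ts)_i\bigr)$.

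It remains to identify the constant: since $UU^T=I_n$ we get $WW^T=(WU)(WU)^T=BB^T$, whence $\abs{\det B}=\sqrt{\det(WW^T)}=\norm{\wvec_1\wedge\cdots\wedge\wvec_m}$, which is the claimed prefactor. The only genuinely delicate points are measure-theoretic: confirming that $\mu_k$ on $H_y$ is the pushforward of $d\zeta$ under the stated isometry, and justifying the two interchanges of integration. For the application in this paper each $f_i=\mathbf{1}_{[0,1]}$ has compact support, so $\Phi$ is bounded with compact support, all integrals converge absolutely, $G(s)$ is entire, and Fubini applies with no fuss; in general one first restricts $s$ so that every $(W^Ts)_i$ lies in the open strip of convergence of $F_i$ and applies Tonelli to $\abs{\Phi}$ to license the interchange. (Alternatively, the whole identity drops out of the coarea formula applied to the linear map $W$, whose coarea factor is precisely $\sqrt{\det(WW^T)}$, together with the coordinate-wise factorization above.)
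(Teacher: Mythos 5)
Your proof is correct and takes essentially the same route as the paper's: a linear change of variables that simultaneously flattens the fibers $H_y$ and decouples the coordinates $x_1,\dots,x_n$, with the prefactor arising as a Jacobian determinant ($\abs{\det B}=\sqrt{\det(WW^T)}$ in your version, $\det(D)$ via the SVD $W=UDV_1^T$ and the matrix $A=[\,W^+ ~ V_2\,]$ in the paper's), followed by Fubini. Your use of an orthonormal basis adapted to $\ker W$ and its orthogonal complement in place of the explicit SVD/pseudo-inverse, together with the more explicit identification of $\mu_k$ and the convergence/Fubini remarks, are presentational refinements of the same argument rather than a different method.
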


\begin{proof}\label{proof:BLT}
By definition
\begin{align*}
G(s)&:= \int_{\mathbb{R}^m} \exp( -\langle s, y \rangle ) ~ g(y) \, dy  \\
    &= \int_{\mathbb{R}^m} \exp( -\langle s, y \rangle ) \int_{H_y} f_1(x_1)\cdots f_n(x_n) \, d\mu_k \, dy  \\
    &= \int_{\mathbb{R}^m} \int_{H_y} \exp( -\langle s, y \rangle ) f_1(x_1)\cdots f_n(x_n) \, d\mu_k \, dy
\end{align*}
As mentioned earlier, this is really an $n$-dimensional integral, but only $m$ variables are explicit in $y$,
and the remaining $n-m$ are implicit in the integration over $H_y$.
To continue, factor $W$ into its singular value decomposition
\[
W  ~~=~~  U ~ \Sigma ~ V^T  ~~=~~  U \left[ ~ D ~ 0 ~ \right] ~ \left[ ~ V_1 ~ V_2 ~ \right]^T  ~~=~~  U ~ D ~ V_1^T
\]
$U$ is $m\times m$ and $V$ is $n\times n$ and both are orthogonal, see \cite{golub1996} p. 71.
$D = \text{diag}( \sigma_1, \ldots , \sigma_m )$ where $\sigma_i$ is the $i$'th singular value of $W$.
Since $W$ has full rank, $D$ is invertible and the pseudo-inverse $W^+ = V_1 D^{-1} U^T$ satisfies $W W^+ = I_m$.
The matrix $V_2$ has $(n-m)$ columns that are orthonormal and form a basis of $H_0$ = the nullspace of $W$.
If we add parameters $t = (t_1,\ldots ,t_{n-m})$ then integration over $V_2 t$ is an \emph{explicit} integration over $H_y$,
and $d\mu_k$ can be replaced by $dt$.
Combine the vectors $y$ and $t$ to get the augmented $n$-vector $\bar{y} := \left[~ y ~ t ~ \right]$.
Now define $A := \left[ ~ W^+ ~ V_2 ~ \right]$ and make the substitution $x := A \bar{y}$ in the previous integral.
It follows that $x = W^+ y + V_2 t$ and so $W x = W W^+ y + W V_2 t = I_m y + 0 = y$ as required.
One easily checks that $\det(A) = \det(D^{-1}) = \norm{\wvec_1 \wedge \cdots \wedge \wvec_m}^{-1}$ and so $A$ is invertible.

Now apply the change of variables formula in integration, \cite{LangReal} p. 403.
Since the substitution function is linear,
the determinant of the Jacobian is constant and can moved outside the integral and inverted to get
\begin{align*}
G(s)&=  \norm{\wvec_1 \wedge \cdots \wedge \wvec_m}  \int_{\mathbb{R}^n} \exp( -\langle s, Wx \rangle )  f_1(x_1)\cdots f_n(x_n)  \, dx  \\
    &=  \norm{\wvec_1 \wedge \cdots \wedge \wvec_m}  \int_{\mathbb{R}^n} \exp( -\langle W^Ts, x \rangle )  f_1(x_1)\cdots f_n(x_n)  \, dx  \\
    &=  \norm{\wvec_1 \wedge \cdots \wedge \wvec_m} \int_{\mathbb{R}^n} \prod_{i=1}^n \exp( -(W^Ts)_i x_i )  f_i(x_i)   \, dx  \\
    &=  \norm{\wvec_1 \wedge \cdots \wedge \wvec_m} \prod_{i=1}^n \int_{\mathbb{R}}  \exp( -(W^Ts)_i x_i )  f_i(x_i)   \, dx_i \\
    &=  \norm{\wvec_1 \wedge \cdots \wedge \wvec_m} \prod_{i=1}^n  F_i( (W^Ts)_i )  ~~~~~~~\text{by definition of }F_i
\end{align*}
\end{proof}

\emph{Remark}.
In the special case $n{=}2$ and $m{=}1$ and $W{=}[1 ~ 1]$ then $G(s) {=} \sqrt{2} F_1(s) F_2(s)$.
Except for the $\sqrt{2}$, this is the same as $\mathcal{B}(f_1 * f_2)$ ($*$ is convolution, see \cite{brychkov1992} p. 114).
In equation \ref{weightedconvolution} the 1-simplex in $\mathbb{R}^2$ has volume $\sqrt{2}$,
but the convolution integral treats it as a graph with volume 1.
So  \ref{weightedconvolution} can be viewed as type of $n$-fold weighted convolution.

\begin{corollary}\label{cor:volume}
If $\vol(y) := \mu_k(H_y \cap Q^n)$ from equation \ref{volumeintegral}, then
\[
\Vol(s) :=  \mathcal{B}\{vol\}(s) = \norm{\wvec_1 \wedge \cdots \wedge \wvec_m} \prod_{i=1}^n P( (W^Ts)_i )
\]
where $P(s) := \mathcal{B} \{ \mathbf{1}_{[0,1]} \} = (1 - \exp(-s))/s$,
and  $(W^Ts)_i$ is the $i$'th component of the vector $W^Ts$.
\end{corollary}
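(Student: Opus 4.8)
The plan is to obtain this as an immediate specialization of Theorem \ref{thm:BLT}. In the integral \ref{weightedconvolution} defining $g(y)$, I would take $f_i = \rho = \mathbf{1}_{[0,1]}$ for every $i = 1, \ldots, n$. Then the product $\rho(x_1)\cdots\rho(x_n)$ is precisely the indicator of $Q^n$, so $g(y)$ becomes exactly the volume integral \ref{volumeintegral}; that is, $g(y) = \mu_k(H_y \cap Q^n) = \vol(y)$.

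Next I would invoke Theorem \ref{thm:BLT} verbatim. It gives $\Vol(s) = \mathcal{B}\{g\}(s) = \norm{\wvec_1 \wedge \cdots \wedge \wvec_m} \prod_{i=1}^n F_i((W^T s)_i)$, where $F_i = \mathcal{B}\{f_i\}$. Since all the $f_i$ coincide with $\mathbf{1}_{[0,1]}$, all the $F_i$ coincide with the single transform $\mathcal{B}\{\mathbf{1}_{[0,1]}\}$, which the text has already computed by an elementary integration to be $(1 - \exp(-s))/s$; this is what we call $P(s)$. Substituting $F_i = P$ into the product yields the claimed formula.

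The only point deserving a word of justification — and it is the main (rather mild) obstacle — is the region-of-convergence bookkeeping flagged in the discussion preceding Theorem \ref{thm:BLT}: one must know that $P(s)$, the product, and $\Vol(s)$ are all genuinely defined. Here this is automatic: $\mathbf{1}_{[0,1]}$ has compact support, so $P(s)$ is entire on $\mathbb{C}$ (with the value $P(0) = 1$ at its removable singularity), and $\rho(x_1)\cdots\rho(x_n)$ has compact support in $\mathbb{R}^n$, hence $\vol$ has compact support in $\mathbb{R}^m$ and $\Vol(s)$ is entire on $\mathbb{C}^m$. Thus both sides are everywhere-defined entire functions and the identity of Theorem \ref{thm:BLT} holds on all of $\mathbb{C}^m$, completing the proof.
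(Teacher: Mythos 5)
Your proposal is correct and matches the paper's intended argument: the corollary is obtained by specializing Theorem \ref{thm:BLT} to $f_i = \mathbf{1}_{[0,1]}$ for all $i$, using the already-computed transform $\mathcal{B}\{\mathbf{1}_{[0,1]}\} = (1-\exp(-s))/s$. Your extra remark on convergence (compact support of $\mathbf{1}_{[0,1]}$ and of $\vol$, hence everywhere-defined transforms) is exactly the point the paper handles by its earlier observation that compactly supported originals have entire transforms.
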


In the next section we explore properties of $P(s)$ and the related function $K(s) := \log(P(-s))$.
After that we investigate the asymptotics of $\prod_{i=1}^n P( (W^Ts)_i )$ as $n \to \infty$.

\section{The Laplace Approximation}\label{sec:Laplace}

\medskip
The Laplace Aproximation is a classical tool in asymptotics.
The version presented here is modeled on the one
in Wong \cite{wong2001}, Theorem 3, p. 495,
but slightly more general.

\begin{theorem}\label{thm:LaplaceMethod.m}
Given a complex-valued function $f(x)$ on an open subset $U \subset \mathbb{R}^m$ 
with $x_0 \in B_0 \subset U$ where $B_0$ is a compact neighborhood, and
\begin{enumerate}[label=(\alph*)]
\item $f(x)$ is real and positive and $C^4$ on $B_0$, 
$x_0$ is a critical point of $f(x)$, and the Hessian of $f$ at $x_0$ is negative-definite.
We denote the $m \times m$ Hessian by $f''(x_0)$.
\item for any open set $V$ with $x_0 \in V \subset B_0$, 
the supremum of $\abs{f(x)}$ on $U \backslash V$ is strictly less than $f(x_0)$.  

\end{enumerate}
Also given is a sequence of real-valued functions $\phi_n(x)$ on $U$ such that
\begin{enumerate}[label=(\alph*),resume]
\item $\phi_n(x_0) \ne 0$ for all $n$
\item $\phi_n(x)$ is $C^1$ on $B_0$ and $\norm{\phi_n}_{C^1} \le K_1 \abs{\phi_n(x_0)}$ for some $K_1$ and $n$ sufficiently large,
        where $\norm{\cdot}_{C^1}$ is the $C^1{-}norm$ on $B_0$,
\item there are $n_0$, $K_2$, and $k$ so that $\int_{U \backslash B_0} \abs{\phi_n(x) f(x)^{n_0}} \, dx < K_2 n^k \abs{\phi_n(x_0)}$ for $n$ sufficiently large,
\end{enumerate}
Then
\[
\int_U  \phi_n(x)f(x)^n \, dx    ~ \sim ~  \abs{\det(f''(x_0))}^{-\frac{1}{2}}   { \left( \frac{2\pi}{n} \right) } ^{ \frac{m}{2} }    \phi_n(x_0) {f(x_0)}^{n+\frac{m}{2}}  \hspace{20pt} (n \to \infty)
\]
\end{theorem}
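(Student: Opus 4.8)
The plan is to run the classical Laplace method in $\mathbb{R}^m$: localize at the maximizer $x_0$, Taylor-expand $\log f$ there, rescale by $1/\sqrt n$, and pass to a Gaussian integral. The one genuinely new feature compared with the textbook statement is that the amplitude $\phi_n$ varies with $n$, so I would carry the \emph{normalized} amplitude $\phi_n(x)/\phi_n(x_0)$ through every estimate rather than trying to quote a result with a fixed $\phi$.

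First I would localize. Fix a small open ball $V$ with $x_0 \in V \subset B_0$ and split $\int_U = \int_V + \int_{B_0\setminus V} + \int_{U\setminus B_0}$. By hypothesis (b), $\theta := f(x_0)^{-1}\sup_{U\setminus V}\abs{f} < 1$, and the suprema of $\abs{f}$ over $B_0\setminus V$ and over $U\setminus B_0$ are both $\le \theta f(x_0)$. On $U\setminus B_0$ write $f^n = f^{n_0}\,f^{\,n-n_0}$ and bound $\abs{\int \phi_n f^n} \le (\theta f(x_0))^{n-n_0}\int_{U\setminus B_0}\abs{\phi_n f^{n_0}} \le K_2 n^k (\theta f(x_0))^{n-n_0}\abs{\phi_n(x_0)}$ by hypothesis (e); on $B_0\setminus V$ hypothesis (d) gives $\abs{\phi_n}\le K_1\abs{\phi_n(x_0)}$ there, so $\abs{\int \phi_n f^n}\le \vol(B_0)\,K_1\abs{\phi_n(x_0)}(\theta f(x_0))^n$. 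Both are $o\!\big(n^{-m/2}f(x_0)^{n+m/2}\abs{\phi_n(x_0)}\big)$, so only $\int_V \phi_n f^n\,dx$ contributes to the asymptotic.

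Next, on $V$ the function $g := \log f$ is $C^4$ near $x_0$ (since $f>0$), with $\nabla g(x_0)=0$ and $g''(x_0) = f''(x_0)/f(x_0)$ negative-definite; Taylor's theorem gives $g(x)-g(x_0) = \tfrac12 (x-x_0)^T g''(x_0)(x-x_0) + R(x)$ with $\abs{R(x)}\le C\abs{x-x_0}^3$ on $V$. Substituting $x = x_0 + y/\sqrt n$, $dx = n^{-m/2}\,dy$, turns $\int_V \phi_n f^n\,dx$ into $f(x_0)^n n^{-m/2}\int_{D_n}\phi_n(x_0+y/\sqrt n)\exp\!\big(\tfrac12 y^T g''(x_0) y + nR(x_0+y/\sqrt n)\big)\,dy$ with $D_n := \sqrt n(V-x_0)\nearrow \mathbb{R}^m$. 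For fixed $y$ the cubic remainder $nR(x_0+y/\sqrt n) = O(\abs{y}^3 n^{-1/2})\to 0$, and hypothesis (d) forces $\phi_n(x_0+y/\sqrt n)/\phi_n(x_0)\to 1$ through the $n$-uniform Lipschitz bound $\abs{\phi_n(x_0+y/\sqrt n)-\phi_n(x_0)}\le K_1\abs{\phi_n(x_0)}\abs{y}/\sqrt n$. After shrinking $V$ so that $C\abs{x-x_0}^3\le -\tfrac14(x-x_0)^T g''(x_0)(x-x_0)$ on $V$, the exponent is dominated on $D_n$ by the $n$-independent negative-definite form $\tfrac14 y^T g''(x_0) y$ and the amplitude ratio stays bounded, so dominated convergence gives $\phi_n(x_0)^{-1}\int_{D_n}(\cdots)\,dy \to \int_{\mathbb{R}^m}\exp\!\big(\tfrac12 y^T g''(x_0) y\big)\,dy = (2\pi)^{m/2}\det(-g''(x_0))^{-1/2}$. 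Since $f''(x_0)$ is negative-definite, $\det(-g''(x_0)) = \abs{\det f''(x_0)}\,f(x_0)^{-m}$, and multiplying back through the prefactor $f(x_0)^n n^{-m/2}\phi_n(x_0)$ yields exactly $\abs{\det f''(x_0)}^{-1/2}(2\pi/n)^{m/2}\phi_n(x_0)f(x_0)^{n+m/2}$; adding the negligible exterior piece from the first step finishes the proof.

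The main obstacle is precisely this $n$-dependence of the amplitude: one cannot invoke an off-the-shelf Laplace theorem with a fixed $\phi$, and pulling $\phi_n(x_0)$ out of the integral is not a priori legitimate. The remedy — and the reason hypotheses (c) and (d) are stated as they are — is to keep the ratio $\phi_n/\phi_n(x_0)$ inside every bound, where (d) supplies a Lipschitz constant on $B_0$ independent of $n$ so that the rescaled amplitude tends to $1$ and remains uniformly bounded. A secondary technical point is that hypothesis (e) only controls a \emph{fixed} power $f^{n_0}$, so the geometric decay of $f^{\,n-n_0}$ on $U\setminus B_0$ must be extracted separately from the strict-maximum hypothesis (b); the polynomial factor $n^k$ is then absorbed harmlessly into the exponential gap.
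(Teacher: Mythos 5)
Your proposal is correct, and it handles the genuinely delicate points (the $n$-dependent amplitude, the complex values of $f$ off $B_0$, and the fact that (e) only controls the fixed power $f^{n_0}$) in a valid way. The treatment of the exterior region $U\setminus B_0$ is essentially the same as the paper's: both factor $f^n = f^{\,n-n_0}f^{\,n_0}$, use hypothesis (b) to get geometric decay of $\abs{f}^{\,n-n_0}$, and invoke (e) to absorb the remaining integral into $K_2 n^k\abs{\phi_n(x_0)}$.

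Where you genuinely diverge from the paper is in the interior analysis. The paper works on all of $B_0$ (shrunk if necessary), applies the Morse Lemma to replace $\log f$ by an \emph{exactly} quadratic exponent $-\tfrac{n}{2}\sum\mu_j y_j^2$ — this is precisely where the $C^4$ hypothesis is spent, since the Morse change of variables loses two derivatives and the transplanted amplitude $\Phi_n(y)=\phi_n(g(y))\det g'(y)$ must remain $C^1$ — and then expands $\Phi_n(y)=\Phi_n(0)+\sum_{\abs{\alpha}=1}D^\alpha\Phi_n(\xi)y^\alpha$ by the Mean Value Theorem, estimating the constant term, the linear correction, and the Gaussian tail outside the chart by explicit Gaussian-moment bounds (the paper's Lemma A.1), with hypothesis (d) controlling $D^\alpha\Phi_n$ relative to $\abs{\phi_n(x_0)}$. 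You instead keep the cubic Taylor remainder of $\log f$, rescale $y=\sqrt n\,(x-x_0)$, shrink $V$ so the remainder is dominated by a quarter of the quadratic form, and pass to the limit by dominated convergence, using (d) both for the uniform bound $\abs{\phi_n}\le K_1\abs{\phi_n(x_0)}$ (which also disposes of $B_0\setminus V$, a piece the paper's decomposition does not need) and for the $n$-uniform Lipschitz estimate giving $\phi_n(x_0+y/\sqrt n)/\phi_n(x_0)\to 1$. Your route is shorter, avoids the Morse Lemma and the auxiliary Gaussian-moment lemma, and in fact only requires $C^3$ smoothness of $f$ for the cubic remainder bound; what it gives up is the quantitative error structure the paper obtains — the explicit $\mathcal{O}(e^{-\epsilon n})$ and $\mathcal{O}(n^{-(m+1)/2})$ correction terms relative to the leading term — since dominated convergence yields only the leading-order equivalence without a rate.
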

For the proof see Appendix \ref{proof:LaplaceMethod.m}.
\medskip

Since $f(x)$ is positive on $B_0$, we can define $h(x) := \log(f(x))$ on $B_0$,
use $f''(x_0)$ = $\allowbreak f(x_0) h''(x_0)$ at the critical point $x_0$,
and the conclusion takes the equivalent form
\[
\int_U \phi_n(x) f(x)^n \, dx   ~ \sim ~  \abs{\det(h''(x_0))}^{-\frac{1}{2}}   { \left( \frac{2\pi}{n} \right) } ^{ \frac{m}{2} }    \phi_n(x_0) e^{n h(x_0)}  \hspace{20pt}  (n \to \infty)
\]

The conclusion implies that the asymptotic limit only depends on the germs
of $f(x)$ and $\phi_n(x)$ at $x_0$.
The contribution away from $x_0$, even at $\infty$, is negligible.

\emph{Remarks}.
The special condition $(b)$ implies that $\abs{f(x)}$ has a unique maximum at $x_0$,
but the converse is false.
In the standard statement of Laplace's Approximation, $\phi_n(x):=\phi(x)$ independent of $n$
and this is what is proved in Copson \cite{Copson} and Widder \cite{Widder}.
Although \cite{Copson} works out a special example with a non-trivial sequence $\phi_n(x)$, on page 43.
Note that the integral on the left of the $\sim$ has an imaginary part in general,
while the right side does not.
The statement then says that the quotient of the imaginary part on the left side,
and the real asymptotic approximation on the right side,
tends to $0$ as $n \to \infty$.

\begin{corollary}\label{cor:LaplaceMethod}
Let $\widehat{\phi}_n(x)$ be another sequence of functions that satisfy parts $(c)$, $(d)$, and $(e)$
in Theorem \ref{thm:LaplaceMethod.m}.
Then
\[
\frac{\int_U  \widehat{\phi}_n(x)f(x)^n \, dx  }{ \int_U  \phi_n(x)f(x)^n \, dx   } ~ \sim ~  \frac{ \widehat{\phi}_n(x_0) }{ \phi_n(x_0) } \hspace{20pt}  (n \to \infty)
\]
\end{corollary}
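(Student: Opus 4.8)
The plan is to invoke Theorem~\ref{thm:LaplaceMethod.m} twice, once for each of the two integrals, and then divide the resulting asymptotic formulas. The hypotheses on $f$ in the corollary are exactly those of the theorem, and by assumption both $\phi_n$ and $\widehat\phi_n$ satisfy conditions $(c)$, $(d)$, and $(e)$; hence the theorem applies to $\int_U \phi_n(x) f(x)^n\,dx$ and to $\int_U \widehat\phi_n(x) f(x)^n\,dx$ simultaneously, with the \emph{same} limit point $x_0$, the \emph{same} Hessian $f''(x_0)$, and the \emph{same} value $f(x_0)$. Writing
\[
C_n := \abs{\det(f''(x_0))}^{-\frac12}\left(\frac{2\pi}{n}\right)^{\frac m2} f(x_0)^{n+\frac m2},
\]
the theorem gives $\int_U \phi_n f^n\,dx = C_n\,\phi_n(x_0)\,(1+\varepsilon_n)$ and $\int_U \widehat\phi_n f^n\,dx = C_n\,\widehat\phi_n(x_0)\,(1+\widehat\varepsilon_n)$ with $\varepsilon_n,\widehat\varepsilon_n \to 0$ as $n \to \infty$.

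The key point is that the common prefactor $C_n$ — a fixed nonzero real for each $n$ — cancels in the ratio, and that $\phi_n(x_0)\ne0$ by condition $(c)$, so the denominator $C_n\,\phi_n(x_0)$ never vanishes and the quotient is legitimate. Dividing yields
\[
\frac{\int_U \widehat\phi_n f^n\,dx}{\int_U \phi_n f^n\,dx} ~=~ \frac{\widehat\phi_n(x_0)}{\phi_n(x_0)}\cdot\frac{1+\widehat\varepsilon_n}{1+\varepsilon_n}.
\]
Since $1+\varepsilon_n \to 1 \ne 0$, the correction factor $(1+\widehat\varepsilon_n)/(1+\varepsilon_n)$ is well-defined for $n$ large and tends to $1$; by the definition of $\sim$ this is precisely the claimed relation.

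I do not anticipate a genuine obstacle here: the only place that warrants a moment's care is the bookkeeping with the $\sim$ relation when the quantities $\phi_n(x_0)$ and $\widehat\phi_n(x_0)$ themselves depend on $n$ (and may be negative). One should recall that $A_n \sim B_n$ means $A_n/B_n \to 1$ and presupposes nothing about the convergence of either side individually; under this reading the display above is valid no matter how $\phi_n(x_0)$ and $\widehat\phi_n(x_0)$ behave, provided both are nonzero, which $(c)$ guarantees. Equivalently, one may run the argument multiplicatively throughout: each integral equals $C_n$ times the corresponding $\phi_n(x_0)$ up to a factor $1+o(1)$, and a ratio of $(1+o(1))$ factors is again $1+o(1)$.
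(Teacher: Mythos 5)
Your argument is correct and is exactly the intended one: the paper states this corollary without proof, treating it as immediate from applying Theorem \ref{thm:LaplaceMethod.m} to numerator and denominator separately and cancelling the common factor $\abs{\det(f''(x_0))}^{-\frac12}\left(\frac{2\pi}{n}\right)^{\frac m2} f(x_0)^{n+\frac m2}$. Your extra care about the $n$-dependence and nonvanishing of $\phi_n(x_0)$, $\widehat{\phi}_n(x_0)$ via condition $(c)$ is sound and matches the paper's reading of $\sim$.
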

Later, this will be appplied to the ratio of integrals \ref{eq:centroid} that define the centroid.

\section{The Functions $P(s)$, $K(s)$, and $K_r(s)$}\label{sec:PK}

In this section $s$ denotes a complex number, and all other numbers are real.

Let $\rho(t)$ be a bounded non-negative non-trivial function on $\mathbb{R}$ with compact support.
And define $P(s) := \mathcal{B}(\rho)(s) := \int_{-\infty}^\infty e^{-ts} \rho(t) \, dt$.
The prototypical case is $\rho = \mathbf{1}_{[0,1]}$, and $P(s)=(1-\exp(-s))/s$.
From the conditions on $\rho$, the integral defining $P(s)$ converges for every complex $s$, 
and so $P(s)$ is analytic on $\mathbb{C}$.
For complex $s$ we write $s = \tau + i\upsilon$.

\begin{theorem}\label{thm:Ps}
With $\rho(t)$ as above, $P(\tau + i\upsilon)$ has these properties 
\begin{enumerate}[label=(\alph*)]
\item $P(\tau)$ is real, and hence all derivatives $P'(\tau), ~ P''(\tau), \ldots$ are real
\item $P(\tau)>0$ and $P''(\tau)>0$
\item for any $\tau$, $P(\tau + i\upsilon) \to 0$ as $\abs{\upsilon} \to \infty$
\item for any $\tau$ and $\delta > 0$, $~~~~ \sup_{\abs{\upsilon} \ge \delta} \abs{P(\tau + i\upsilon)} < P(\tau)$
\end{enumerate}
\end{theorem}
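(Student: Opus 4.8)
The plan is to dispatch parts (a)--(c) quickly and then spend the real effort on (d), which is the only substantive one.

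For (a) and (b) I would work directly from the integral $P(s) = \int_{-\infty}^\infty e^{-ts}\rho(t)\,dt$, using that $\rho$ is bounded with compact support so every integrand below is bounded with compact support, hence integrable, and differentiation under the integral sign is legitimate (this is already implicit in the analyticity of $P$ noted above, which gives $P^{(k)}(\tau) = \int_{-\infty}^\infty (-t)^k e^{-t\tau}\rho(t)\,dt$). For real $\tau$ this integrand is real, proving (a). For (b): non-triviality of $\rho$ means $\{\rho>0\}$ has positive Lebesgue measure, and since $e^{-t\tau}\rho(t)\ge 0$ with strict inequality on that set, $P(\tau)>0$; removing the null set $\{0\}$ still leaves $\{\rho>0\}\setminus\{0\}$ of positive measure, and there $t^2 e^{-t\tau}\rho(t)>0$, so $P''(\tau) = \int_{-\infty}^\infty t^2 e^{-t\tau}\rho(t)\,dt > 0$.

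For (c) I would write $P(\tau+i\upsilon) = \int_{-\infty}^\infty e^{-it\upsilon}\,g_\tau(t)\,dt$ with $g_\tau(t):= e^{-t\tau}\rho(t)$, which is bounded with compact support hence in $L^1(\mathbb{R})$; this is (up to sign convention) the Fourier transform of $g_\tau$, so $P(\tau+i\upsilon)\to 0$ as $\abs{\upsilon}\to\infty$ by the Riemann--Lebesgue lemma.

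For (d), the triangle inequality immediately gives $\abs{P(\tau+i\upsilon)} \le \int_{-\infty}^\infty e^{-t\tau}\rho(t)\,dt = P(\tau)$ for all $\upsilon$. The first real step is to show this is \emph{strict} when $\upsilon\ne 0$: writing $P(\tau+i\upsilon)=re^{i\phi}$ with $r\ge 0$ and taking real parts of $r = e^{-i\phi}P(\tau+i\upsilon)$ yields $r = \int_{-\infty}^\infty e^{-t\tau}\rho(t)\cos(t\upsilon+\phi)\,dt$; if $r=P(\tau)$ then $\int_{-\infty}^\infty e^{-t\tau}\rho(t)\,(1-\cos(t\upsilon+\phi))\,dt = 0$ with a non-negative integrand, forcing $\cos(t\upsilon+\phi)=1$ for a.e.\ $t\in\{\rho>0\}$, i.e.\ $t\upsilon+\phi\in 2\pi\mathbb{Z}$ on a positive-measure set --- impossible for $\upsilon\ne 0$ since that set is countable. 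The second, and I expect the main obstacle, is upgrading this pointwise strictness to the \emph{uniform} bound over the unbounded range $\abs{\upsilon}\ge\delta$: this is where part (c) is needed. Choose $R>\delta$ with $\abs{P(\tau+i\upsilon)}\le P(\tau)/2$ for $\abs{\upsilon}\ge R$; on the compact annulus $\delta\le\abs{\upsilon}\le R$ the continuous function $\upsilon\mapsto\abs{P(\tau+i\upsilon)}$ attains a maximum, which is $<P(\tau)$ by the strict inequality just proved; taking the larger of these two bounds gives $\sup_{\abs{\upsilon}\ge\delta}\abs{P(\tau+i\upsilon)} < P(\tau)$.
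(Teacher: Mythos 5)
Your proposal is correct and follows essentially the same route as the paper: differentiation under the integral and positivity of the integrand for (a)--(b), Riemann--Lebesgue for (c), and for (d) the triangle-inequality bound, the phase-rotation argument showing $\int e^{-t\tau}\rho(t)(1-\cos(t\upsilon+\phi))\,dt=0$ forces a contradiction for $\upsilon\ne 0$, and then part (c) to handle $\abs{\upsilon}\to\infty$. Your explicit compact-annulus step merely spells out what the paper states more tersely ("ruled out by $(c)$"), so there is no substantive difference.
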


\begin{proof}
Following Daniels \cite{Daniels}.
When $s$ is real the integrand in \ref{eqn:BLT} is real and this shows $(a)$.
When $s$ is real, the integrand is positive on supp($\rho$) (except on the boundary) and this shows $P(\tau)>0$.
Since $\rho(t)$ is bounded, differentiation under the integral is justified, so do it twice to get
\begin{equation}
P''(\tau) = \int_{-\infty}^\infty e^{-t\tau} \rho(t) t^2 \, dt
\end{equation}
When $s$ is real, the integrand is positive on supp($\rho$) (except on the boundary and at $t=0$) and this shows $(b)$.
Item $(c)$ is immediate from the Riemann-Lebesgue Lemma, \cite{Goldberg1965} page 7.
For $(d)$ we have
\[
\abs{P(\tau+i\upsilon)} = \left| \int_{-\infty}^\infty e^{-t(\tau+i\upsilon)} \rho(t) \, dt \right|  \\
                 \le \int_{-\infty}^\infty e^{-t\tau} \abs{  e^{-it\upsilon} } \rho(t) \, dt
                 = P(\tau)
\]
Now suppose that $\abs{P(\tau+i\upsilon)} = P(\tau)$ for some $\upsilon \ne 0$.
Then
\begin{align*}
e^{-i\alpha} {P(\tau+i\upsilon)} &= P(\tau)  ~~~~~~ \text{for some real } \alpha                           \\
e^{-i\alpha} \int_{-\infty}^\infty e^{-t(\tau+i\upsilon)} \rho(t) \, dt &=  \int_{-\infty}^\infty e^{-t\tau} \rho(t) \, dt  \\
\int_{-\infty}^\infty e^{-\tau t} \left( e^{-i(\alpha+\upsilon t)} -1 \right) \rho(t) \, dt &= 0                  \\
\intertext{Multiply by -1 and take the real part}
\int_{-\infty}^\infty e^{-\tau t} \left( 1 - \cos(\alpha+\upsilon t) \right) \rho(t) \, dt  &= 0
\end{align*}
Since $\upsilon \ne 0$, $1 - \cos(\alpha+\upsilon t)$ is positive a.e. 
and since $\rho(t) \ge 0$ and $\int_{-\infty}^\infty \rho(t) \, dt > 0$ this is a contradiction,
and so  $\abs{P(\tau+i\upsilon)} < P(\tau)$ for all $\upsilon \ne 0$.
The only way $(d)$ can be false if there is a sequence 
$\abs{\upsilon_n} \to \infty$ with  $\abs{P(\tau+i\upsilon_n)} \to P(\tau)$,
but this is ruled out by $(c)$.
So $(d)$ is true.
\end{proof}

Now specialize to the case where $\rho = \mathbf{1}_{[0,1]}$ and $P(s)=(1-\exp(-s))/s$.
Note that $s{=}0$ is a removable singularity, and that $P(0){=}1$.
If $t$ is real, then by \ref{thm:Ps} $P(t)>0$, so we can define
\begin{align*}
K(t) &:= \log( P(-t) ) = \log( (\exp(t)-1)/t ) \\
\text{and then } ~~~~ K'(t) &=  -P'(-t)/P(-t) = (\coth(t/2) - 2/t + 1)/2 = (L(t/2) + 1)/2 
\end{align*}
$L(x) := \coth(x) - 1/x$ is the \emph{Langevin function}, see \cite{wiki:Langevin}.
If one defines $\sigma(t) := K'(t)$ then one can show from
the above expression  that $\sigma(t)$ is a \emph{squashing function}, according to Definition \ref{defn:squashing}.
From now on we will call it \emph{the} squashing function.
The singularity at $t{=}0$ is removable.
For a plot of $\sigma(t)$, see Figure \ref{fig:squashing}.
\begin{figure}[!ht]
    \centering
    \subfloat{\includegraphics[width=1.0\linewidth]{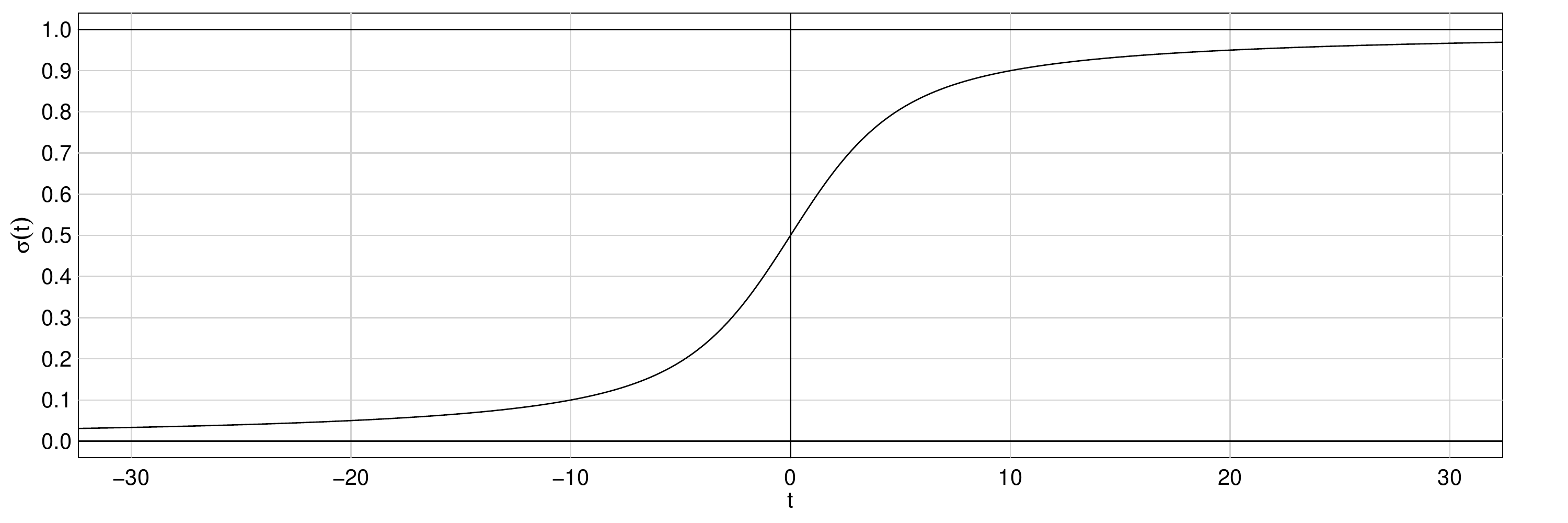}}    
    \caption{The squashing function $\sigma(t) := K'(t) = (L(t/2) + 1)/2$}
    \label{fig:squashing}
\end{figure}

\emph{Remarks}. 
Daniels shows that $K'(t)$ is a squashing function whenever
the smallest interval containing $\operatorname{supp}(\rho)$ is [0,1], see Daniels \cite{Daniels} p. 638.
If $\rho(t)$ is a probability density, then $K(t)$ is its \emph{cumulant-generating function}.
The letter `$K$' is the standard letter for this function.

The zeros of $P(s)$ are all $2\pi ki$ where $k$ is a non-zero integer.
If the half-lines $[2\pi i, +i\infty)$ and $[-2\pi i, -i\infty)$ are cut out of $\mathbb{C}$,
the remainder $\Omega$ is simply connected and $K(t)$ can be extended to  $\Omega$
by integration of $P'(s)/P(s)$ on a path, as in Rudin \cite{Rudin}, Theorem 13.11.
The following facts about $K(s)$, for complex $s$, are useful
\begin{align}\label{eqn:Kfacts}
\begin{split}
\exp( K(s) ) &= (\exp(s)-1)/s               \hspace{26pt} s \in \Omega  \\
\exp( \Re(K(s)) ) &= \abs{ (\exp(s)-1)/s }  \hspace{19pt}  s \in \Omega ~ \text{(take modulus of both sides)} \\
\Re(K(s)) &= \log \abs{ (\exp(s)-1)/s }  \hspace{5pt}     s \in \Omega ~ \text{(take log of both sides)} \\
\end{split}
\end{align}
It is convenient to use the abbreviation $K_r(s) := \Re(K(s))$.
Note that $K_r(s)$ can be defined for \emph{all} $s \in \mathbb{C}$,
if we define $K_r(s) = \log \abs{P(-s)}$, and consider $K_r$ as a function
from $\mathbb{C}$ to the extended line $[-\infty,\infty)$.
Note that $K_r(2\pi ki) = -\infty$ when $k$ is a non-zero integer.

\begin{lemma}\label{lemma:Kr}
On the vertical line $s = \tau + i\upsilon$, with fixed $\tau$ and variable $\upsilon$,
\begin{enumerate}[label=(\alph*)]
\item for $\delta>0$ and $\beta \ne 0$,  $~\sup_{\abs{\upsilon} \ge \delta} K_r(\tau + i\beta\upsilon) < K(\tau)$
\item for all $\upsilon$, $K_r(\tau + i\upsilon) \le K(\tau)$
\item for $\delta>0$ and $\abs{\beta_1} \le \abs{\beta_2}$, 
         $\sup_{\abs{\upsilon} \ge \delta} K_r(\tau + i\beta_2\upsilon) \le \sup_{\abs{\upsilon} \ge \delta} K_r(\tau + i\beta_1\upsilon)$
\item for any $\tau$ there is a constant $C$ (depending on $\tau$) so that \\
           $~ K_r( \tau + i\upsilon ) \le -\log\abs{\upsilon} + C\hspace{20pt}$  for \emph{all} $\upsilon$        
\item for $\delta>0$ and $\tau \ne 0$, there is a constant $C$ (depending on $\delta$ and $\tau$) so that \\
           $~ -K_r( \tau + i\upsilon ) \le \log \abs{\upsilon} + C \hspace{20pt}$    for $\abs{\upsilon} \ge \delta$
\end{enumerate}
\end{lemma}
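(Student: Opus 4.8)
The plan is to work from the explicit formula $K_r(\tau+i\upsilon) = \log\abs{(\exp(\tau+i\upsilon)-1)/(\tau+i\upsilon)}$ established in \ref{eqn:Kfacts}, and to unpack the modulus in the numerator. A direct computation gives $\abs{\exp(\tau+i\upsilon)-1}^2 = e^{2\tau} - 2e^\tau\cos\upsilon + 1$, which lies between $(e^\tau-1)^2$ and $(e^\tau+1)^2$ and in particular is bounded above and below by constants depending only on $\tau$. Thus $K_r(\tau+i\upsilon) = \tfrac12\log(e^{2\tau}-2e^\tau\cos\upsilon+1) - \log\abs{\tau+i\upsilon}$, and the second term is exactly $-\tfrac12\log(\tau^2+\upsilon^2)$. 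This decomposition is the workhorse for all five parts.

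For part $(a)$, I would observe that $K_r(\tau) = K(\tau) = \log\abs{(\exp(\tau)-1)/\tau}$ since $P(-\tau)>0$ by \ref{thm:Ps}$(b)$, and that part $(d)$ of \ref{thm:Ps} applied to $P(-s)$ (equivalently to the reflected weight $\rho(-t)$, whose Laplace transform is $P(-s)$) already yields $\sup_{\abs{\upsilon}\ge\delta}\abs{P(-(\tau+i\upsilon))} < P(-\tau)$; taking logs gives the claim for $\beta=1$, and the substitution $\upsilon \mapsto \beta\upsilon$ (which only rescales the index set $\{\abs{\upsilon}\ge\delta\}$ to $\{\abs{\upsilon}\ge\delta/\abs\beta\}$, still bounded away from $0$) handles general $\beta\ne 0$. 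Part $(b)$ is the $\upsilon\to 0$-inclusive version of the modulus inequality $\abs{P(\tau+i\upsilon)}\le P(\tau)$ from \ref{thm:Ps}$(d)$, again after reflection, so it follows by taking logs with no exclusion of $\upsilon=0$. Part $(c)$ is purely about the index sets: $\{\abs{\upsilon}\ge\delta\}$ is carried by $\upsilon\mapsto\beta_2\upsilon$ into $\{\,\abs{\cdot}\ge\abs{\beta_2}\delta\,\}$ and by $\upsilon\mapsto\beta_1\upsilon$ into the larger set $\{\,\abs{\cdot}\ge\abs{\beta_1}\delta\,\}\supseteq\{\,\abs{\cdot}\ge\abs{\beta_2}\delta\,\}$, and the sup of $K_r(\tau+i\cdot)$ over a smaller set is no larger — so I would just chase the change of variable and monotonicity of sup under set inclusion.

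For parts $(d)$ and $(e)$ I return to the decomposition above. Since $e^{2\tau}-2e^\tau\cos\upsilon+1 \le (e^\tau+1)^2$ for all $\upsilon$, we get $K_r(\tau+i\upsilon) \le \log(e^\tau+1) - \tfrac12\log(\tau^2+\upsilon^2) \le \log(e^\tau+1) - \log\abs{\upsilon}$, which is $(d)$ with $C = \log(e^\tau+1)$ (one checks separately the trivial cases $\abs\upsilon \le 1$, where $-\log\abs\upsilon\ge 0$ and $K_r$ is bounded above on that compact $\upsilon$-range for fixed $\tau$, so enlarging $C$ suffices). For $(e)$, since $\tau\ne 0$ we have $e^{2\tau}-2e^\tau\cos\upsilon+1 \ge (e^\tau-1)^2 > 0$, hence $-K_r(\tau+i\upsilon) = -\tfrac12\log(e^{2\tau}-2e^\tau\cos\upsilon+1) + \tfrac12\log(\tau^2+\upsilon^2) \le -\log\abs{e^\tau-1} + \tfrac12\log(\tau^2+\upsilon^2) \le \log\abs\upsilon + C$ once $\abs\upsilon\ge\delta$, absorbing the $\tfrac12\log(1+\tau^2/\upsilon^2) \le \tfrac12\log(1+\tau^2/\delta^2)$ and $-\log\abs{e^\tau-1}$ into the constant $C$.

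The only genuinely delicate point is the strict inequality in $(a)$; the rest is bookkeeping with the modulus formula and change of variables. I expect the main obstacle to be making the reduction of $(a)$ to \ref{thm:Ps}$(d)$ clean — specifically, justifying that $K_r(s) = \Re K(s) = \log\abs{P(-s)}$ holds on all of the vertical line (not just on $\Omega$) so that the supremum in $(a)$ really is a supremum of $\log\abs{P(-(\tau+i\upsilon))}$, including handling the punctured points where $P(-s)=0$ and $K_r=-\infty$, which only help the inequality. Once that identification is in place, $(a)$–$(c)$ are immediate consequences of \ref{thm:Ps}, and $(d)$–$(e)$ follow from the elementary estimate on $\abs{\exp(\tau+i\upsilon)-1}$.
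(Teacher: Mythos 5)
Your proposal is correct and follows essentially the same route as the paper: parts (a)--(c) are reduced to Theorem \ref{thm:Ps}(d) by taking logarithms and rescaling/comparing the index sets, and parts (d)--(e) come from the split $K_r(\tau+i\upsilon)=\log\abs{\exp(\tau+i\upsilon)-1}-\log\abs{\tau+i\upsilon}$ with the same elementary bounds $\abs{e^\tau-1}\le\abs{\exp(\tau+i\upsilon)-1}\le e^\tau+1$ and $\abs{\upsilon}\le\abs{\tau+i\upsilon}\le\sqrt{1+\tau^2/\delta^2}\,\abs{\upsilon}$. The only detail to add is the degenerate case $\beta_1=0$ in (c), where the change-of-variable picture breaks down and one instead invokes part (b), exactly as the paper does; this (and the immaterial $\delta/\abs{\beta}$ versus $\abs{\beta}\delta$ slip in (a)) is trivial bookkeeping.
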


\begin{proof}
Start with
\begin{align*}
\sup_{\abs{\upsilon} \ge \delta} \abs{P(-\tau - i\upsilon)}         ~&<~ P(-\tau)                    \hspace{30pt}   \text{from Theorem  } \ref{thm:Ps}          \\
\log \left( \sup_{\abs{\upsilon} \ge \delta} \abs{P(-\tau - i\upsilon)} \right)     ~&<~ K(\tau)     \hspace{35pt}   \text{take log of both sides}     \\
\sup_{\abs{\upsilon} \ge \delta} \log \abs{P(-\tau - i\upsilon)}     ~&<~ K(\tau)                    \hspace{35pt}   \text{take sup outside log}     \\
\sup_{\abs{\upsilon} \ge \delta} K_r(\tau + i\upsilon)       ~&<~   K(\tau)                          \hspace{35pt}   \text{from the definition of } K_r(s) \\
\end{align*}
And this is $(a)$ with $\beta{=}1$.
For $\beta{\ne}1$ just reparameterize and get a different $\delta > 0$.
Part $(b)$ is a corollary of part $(a)$.

For $(c)$ note that if both $\beta_0{=}0$ and $\beta_1{=}0$, then both sides are $K(\tau)$ so the inequality is trivial.
If $\beta_1{=}0$ and $\beta_2{\ne}0$, then the right side of the inequality is $K(\tau)$ and the result follows from $(b)$.
So now assume $0 < \abs{\beta_1} \le \abs{\beta_2}$.  Then
\begin{align*}
\{ \upsilon ~:~ \abs{\upsilon} \ge \abs{\beta_2}\delta \}        ~&\subseteq ~     \{ \upsilon ~:~ \abs{\upsilon} \ge \abs{\beta_1}\delta \} \\
\sup_{\abs{\upsilon} \ge \abs{\beta_2} \delta} K_r(\tau + i\upsilon)     &\le   \sup_{\abs{\upsilon} \ge \abs{\beta_1}\delta} K_r(\tau + i\upsilon)  \\
\sup_{\abs{\beta_2 \upsilon} \ge \abs{\beta_2} \delta} K_r(\tau + i \beta_2 \upsilon)     &\le   \sup_{\abs{\beta_1 \upsilon} \ge \abs{\beta_1}\delta} K_r(\tau + i \beta_1 \upsilon)  \\
\sup_{\abs{ \upsilon} \ge   \delta} K_r(\tau + i \beta_2 \upsilon)     &\le   \sup_{\abs{\upsilon} \ge \delta} K_r(\tau + i \beta_1 \upsilon)  \\
\end{align*}
And this is $(c)$.

For $(d)$ we have
\[
K_r( \tau + i\upsilon ) ~:=~ \log \abs{ \frac{\exp(\tau + i\upsilon) -1}{\tau + i\upsilon} } ~=~ \log \abs{\exp(\tau + i\upsilon) -1} - \log \abs{\tau + i\upsilon}
\]
\[
\log \abs{\exp(\tau + i\upsilon) -1}  \le  \log( \abs{\exp(\tau + i\upsilon)} + 1 ) ~=~ \log( \exp(\tau) + 1)
\]
\[
\abs{\upsilon} \le \abs{\tau + i\upsilon}  \hspace{10pt} \implies \hspace{10pt}   -\log\abs{\tau + i\upsilon} ~\le~  -\log\abs{\upsilon}
\]
Add the two previous inequalities to get $K_r( \tau + i\upsilon ) \le -\log\abs{\upsilon} + \log( \exp(\tau) + 1)$, and this is $(d)$.

For $(e)$ we need the fact that $\abs{ \exp(\tau + i\upsilon) -1}  \ge \abs{\exp(\tau)-1}$.
This can be seen by expanding into $\cos(\tau)$ and $\sin(\upsilon)$, or sketching a picture in $\mathbb{C}$.
We also need $\abs{\tau + i\upsilon} \le \sqrt{1 + \tau^2 / \delta^2} \abs{\upsilon}$, for $\abs{\upsilon} \ge \delta$, which is elementary.
Then
\begin{align*}
-K_r( \tau + i\upsilon ) &= \log \abs{ \frac{\tau + i\upsilon}{\exp(\tau + i\upsilon) -1} } \\
                        ~&\le~  \log \abs{\upsilon}  ~+~   \log(1 + \tau^2 / \delta^2)/2  ~-~ \log \abs{ \exp(\tau) - 1 }  \hspace{20pt}\text{for }  \abs{\upsilon} \ge \delta  \\
\end{align*}
and so we can take $C$ to be the two terms after $\log \abs{\upsilon}$.
We note that $(e)$ is false when $\tau{=}0$.
\end{proof}

\emph{Remark 1}.
Although $K(s)$ is analytic on $\Omega$,
we only really need it to be analytic on the horizontal strip $\{ \tau +i\upsilon : \abs{\upsilon}{<}2\pi \} \subset \Omega$.
This is used later in the proof of Lemma \ref{lemma:volume}.

\emph{Remark 2}.
In Section \ref{sec:mainresult} we must consider $\rho(t) := t \mathbf{1}_{[0,1]}$; i.e. the previous $\rho$ multiplied by $t$.
This $\rho(t)$ is still bounded, non-negative, non-trivial, and with compact support.
We denote the new functions corresponding to this new $\rho(t)$ by 
$\widehat{P}(s)$, $\widehat{K}(s)$, and $\widehat{K}_r(s)$.
An easy calculation gives $\widehat{P}(s) = ((s-1)e^s+1)/s^2$.
The roots of $\widehat{P}(s)$ are \emph{not} at $2\pi ki$ with $k$ a non-zero integer.
These roots are shifted to the left and away from 0,
and all are outside the horizontal strip $\{ \tau +i\upsilon : \abs{\upsilon}{<}2\pi \} \subset \Omega$.
The two roots closest to 0 are approximately -2.0888 $\pm$ 7.46149$i$.
Thus $\widehat{K}(s)$ is analytic on the strip, and $\widehat{K}_r(s)$ is a well defined function
from $\mathbb{C}$ to $[-\infty,\infty)$.
From inspection of the proof of Lemma \ref{lemma:Kr},
parts $(a)$, $(b)$, and $(c)$ are valid with $K_r$ replaced by $\widehat{K}_r$.
We use these facts about $\widehat{K}$ and $\widehat{K}_r$ in the proof of the Main Theorem \ref{proof:main}.
Fortunately we do not need the bounds in parts $(d)$ and $(e)$.
In fact part $(e)$ is false for $\widehat{K}_r$;
there are infinitely many vertical lines on which $\widehat{K}_r$ is $-\infty$.

\begin{corollary}\label{cor:Kr}
Let $\tau, \omega \in \mathbb{R}^m$ be fixed, and $\upsilon \in \mathbb{R}^m$ be variable.
\begin{enumerate}[label=(\alph*)]
\item if $\upsilon \ne 0$ and $\hat{\upsilon}:=\upsilon/\abs{\upsilon}$,
then there is a constant $C$, depending on $\langle \omega,\tau \rangle$, so
\[
K_r( \langle \omega, \tau \rangle + i\langle \omega, \upsilon \rangle ) ~\le~  -\log \abs{\upsilon}  -\log\abs{\langle \omega,\hat{\upsilon} \rangle}   + C
\]
\item if $\langle \omega,\tau \rangle \ne 0$ and $\delta{>}0$, 
then there is a constant $C$, depending on $\langle \omega,\tau \rangle$ and $\delta$, so 
\[
-K_r( \langle \omega, \tau \rangle + i\langle \omega, \upsilon \rangle ) ~\le~ \log \abs{\upsilon} + C    \hspace{20pt} \text{for } \abs{\upsilon} \ge \delta
\]
\end{enumerate}
\end{corollary}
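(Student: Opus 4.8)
The plan is to obtain both parts by specializing Lemma \ref{lemma:Kr}, taking its ``$\tau$'' to be the real scalar $\langle\omega,\tau\rangle$ and its ``$\upsilon$'' to be the real scalar $\langle\omega,\upsilon\rangle$, and then converting the resulting bounds (which involve $\abs{\langle\omega,\upsilon\rangle}$) into bounds involving $\abs{\upsilon}$. Part $(a)$ will fall out immediately; the one point requiring attention — the ``hard part'' — is that for part $(b)$ this substitution does not suffice as a black box, because the hypothesis $\abs{\upsilon}\ge\delta$ gives \emph{no} positive lower bound on $\abs{\langle\omega,\upsilon\rangle}$ (that scalar is tiny when $\upsilon$ is nearly orthogonal to $\omega$), so Lemma \ref{lemma:Kr}$(e)$ cannot be invoked directly and I will instead reproduce its short estimate by hand with the vector norm.

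For $(a)$ I would apply Lemma \ref{lemma:Kr}$(d)$ with its $\tau$ equal to $\langle\omega,\tau\rangle$ and its $\upsilon$ equal to $\langle\omega,\upsilon\rangle$, obtaining a constant $C$ depending on $\langle\omega,\tau\rangle$ with
\[
K_r(\langle\omega,\tau\rangle + i\langle\omega,\upsilon\rangle) ~\le~ -\log\abs{\langle\omega,\upsilon\rangle} + C .
\]
Since $\upsilon\ne 0$ I would then factor $\langle\omega,\upsilon\rangle = \abs{\upsilon}\,\langle\omega,\hat\upsilon\rangle$, so that $-\log\abs{\langle\omega,\upsilon\rangle} = -\log\abs{\upsilon} - \log\abs{\langle\omega,\hat\upsilon\rangle}$, which is precisely the claimed bound (trivially true when $\langle\omega,\hat\upsilon\rangle=0$, since then the right-hand side is $+\infty$).

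For $(b)$ I would write $a := \langle\omega,\tau\rangle\ne 0$ and $b := \langle\omega,\upsilon\rangle$, so that the argument of $K_r$ is $a+ib$, and expand
\[
-K_r(a+ib) ~=~ \log\abs{a+ib} ~-~ \log\abs{\exp(a+ib)-1}.
\]
The first term I would bound using $\abs{a+ib}\le\abs{a}+\abs{\omega}\abs{\upsilon}\le(\abs{a}/\delta+\abs{\omega})\abs{\upsilon}$ for $\abs{\upsilon}\ge\delta$, whence $\log\abs{a+ib}\le\log\abs{\upsilon}+\log(\abs{a}/\delta+\abs{\omega})$. For the second term I would use the elementary inequality $\abs{\exp(a+ib)-1}\ge\abs{\exp(a)-1}>0$ already used in the proof of Lemma \ref{lemma:Kr}$(e)$ (it follows from $\abs{\exp(a+ib)-1}^2 = e^{2a}-2e^a\cos b+1 \ge (e^a-1)^2$ together with $a\ne 0$), giving $-\log\abs{\exp(a+ib)-1}\le-\log\abs{e^a-1}$. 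Adding the two estimates yields
\[
-K_r(\langle\omega,\tau\rangle + i\langle\omega,\upsilon\rangle) ~\le~ \log\abs{\upsilon} + \log\!\big(\abs{\langle\omega,\tau\rangle}/\delta+\abs{\omega}\big) - \log\abs{e^{\langle\omega,\tau\rangle}-1} \qquad (\abs{\upsilon}\ge\delta),
\]
and the last two terms constitute the desired constant $C$ (depending only on $\langle\omega,\tau\rangle$, on $\delta$, and on the fixed vector $\omega$). All of this is routine once the remark above about near-orthogonality is taken into account, so I expect no further obstacles.
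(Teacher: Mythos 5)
Your proposal is correct, and part (a) coincides with the paper's proof: apply Lemma \ref{lemma:Kr}(d) with the scalar $\langle\omega,\tau\rangle$ on the real axis and $\langle\omega,\upsilon\rangle$ as the imaginary part, then factor $\langle\omega,\upsilon\rangle=\abs{\upsilon}\langle\omega,\hat{\upsilon}\rangle$ (the degenerate case $\langle\omega,\hat{\upsilon}\rangle=0$ being vacuous, as you note).

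For part (b) you take a genuinely different, and somewhat cleaner, route. You correctly identify the real difficulty — $\abs{\upsilon}\ge\delta$ gives no lower bound on $\abs{\langle\omega,\upsilon\rangle}$ — and you resolve it by inlining the elementary estimate behind Lemma \ref{lemma:Kr}(e) directly in terms of the vector norm: $-K_r(a+ib)=\log\abs{a+ib}-\log\abs{e^{a+ib}-1}$ with $a:=\langle\omega,\tau\rangle\ne 0$, then $\abs{a+ib}\le(\abs{a}/\delta+\abs{\omega})\abs{\upsilon}$ for $\abs{\upsilon}\ge\delta$ and $\abs{e^{a+ib}-1}\ge\abs{e^{a}-1}>0$ (exactly the two inequalities used in the proof of Lemma \ref{lemma:Kr}(e)). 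The paper instead invokes Lemma \ref{lemma:Kr}(e) as a black box on the scalar $\langle\omega,\upsilon\rangle$ and then patches the near-orthogonal regime by a two-case split: when $\abs{\langle\omega,\upsilon\rangle}\le\delta$ it bounds $-K_r$ by the finite supremum $M:=\sup_{\abs{y}\le\delta}\bigl(-K_r(\langle\omega,\tau\rangle+iy)\bigr)$ (finite precisely because $\langle\omega,\tau\rangle\ne 0$ keeps the argument away from the zeros of $P(-s)$ on the imaginary axis), enlarging $C$ so that $M\le\log(\delta)+C$; when $\abs{\langle\omega,\upsilon\rangle}\ge\delta$ it uses Cauchy–Schwarz and absorbs $\log\abs{\omega}$ into $C$. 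Both arguments are valid and yield a constant depending on $\langle\omega,\tau\rangle$, $\delta$, and the fixed vector $\omega$ (harmless, since $\omega$ is fixed); yours buys a single uniform estimate with no case analysis and no compactness step, at the cost of reopening the proof of Lemma \ref{lemma:Kr}(e) rather than citing it, while the paper's version reuses the lemma as stated and isolates the near-orthogonality issue in the supremum $M$.
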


\begin{proof}
For $(a)$ pick $C$  by Lemma \ref{lemma:Kr} so that
\[
K_r( \langle \omega, \tau \rangle + i\langle \omega, \upsilon \rangle ) ~\le~ -\log\abs{\langle \omega, \upsilon \rangle} + C   ~=~ -\log\abs{\upsilon} -\log\abs{\langle \omega, \hat{\upsilon} \rangle} + C
\]
and this is $(a)$.  
For $(b)$ pick $C$  by Lemma \ref{lemma:Kr} so that
\begin{equation*}
-K_r( \langle \omega, \tau \rangle + i y )   ~\le~   \log \abs{y} + C   \hspace{20pt} \text{for} ~  \abs{y} \ge \delta
\end{equation*}
Increase $C$ if necessary so that
\begin{equation*}
-K_r( \langle \omega, \tau \rangle + i y )   ~\le~   \log \abs{y} - \log\abs{\omega} + C   \hspace{20pt} \text{for} ~  \abs{y} \ge \delta
\end{equation*}
Let $M = \sup_{\abs{y} \le \delta} \left( -K_r( \langle \omega, \tau \rangle + i y ) \right)$.  
Then $M$ is finite because $\langle \omega, \tau \rangle \ne 0$ and the argument avoids the singular points of $K_r$.
Increase $C$ if necessary, so that $M \le \log(\delta) + C$.
Now pick any $\upsilon \in \mathbb{R}^m$ with $\abs{\upsilon} \ge \delta$. \\
case 1: $\abs{ \langle \omega, \upsilon \rangle} \le \delta$
\begin{equation*}
-K_r( \langle \omega, \tau \rangle + i \langle \omega, \upsilon \rangle )   ~\le~  M  ~\le~ \log(\delta) + C   ~\le~   \log\abs{\upsilon} + C
\end{equation*}
case 2: $\delta \le  \abs{ \langle \omega, \upsilon \rangle}$  By Cauchy-Schwarz we know that  $\abs{ \langle \omega, \upsilon \rangle} \le \abs{\omega}\abs{\upsilon}$. 
\begin{align*}
-K_r( \langle \omega, \tau \rangle + i \langle \omega, \upsilon \rangle )  &~\le~ \log\abs{\langle \omega, \upsilon \rangle}  - \log\abs{\omega} + C  \\
                                                       &~\le~ \log\abs{\upsilon} + \log\abs{\omega} - \log\abs{\omega}  + C ~=~ \log\abs{\upsilon} + C
\end{align*}
And we are done in either case.
\end{proof}

\section{The Main Result}\label{sec:mainresult}

The goal in this section is to prove the main result.

\begin{maintheorem}\label{thm:main} 
Let $\mathbf{w} : [0,1] \to \mathbb{R}^m$ be a step function
with $w_1, \ldots, w_m$ linearly independent and 
$\Lambda_\mathbf{w} : L^1[0,1] \twoheadrightarrow \mathbb{R}^m$ the induced surjective linear operator.
If $y_0 \in \operatorname{int}(\Lambda_\mathbf{w}(Q^\infty))$ 
then for the  profile-gauge directed filtration $\mathapprox{P}$ of $L^1[0,1]$
\begin{equation*} 
\operatorname{centroid}_{\mathapprox{P}} \left( \Lambda_\mathbf{w}^{-1}(y_0) \cap Q^\infty \right) ~~=~~ \sigma( \langle \tau_0, \mathbf{w} \rangle)
\end{equation*}
where $\tau_0 \in \mathbb{R}^m$ is the unique solution to the saddlepoint equation
\begin{equation}\label{eqn:saddlepoint}
\int_0^1 \sigma( \langle \tau, \mathbf{w}(x) \rangle ) \mathbf{w}(x) \,dx  =  y_0
\end{equation}
and where  $\sigma(t) := (\coth(t/2) - 2/t + 1)/2  = (L(t/2) + 1)/2$.
\end{maintheorem}

The main tools are the Bilateral Laplace Transform complex inversion formula,
the saddlepoint method, and the Laplace Approximation.
The variables $s$ and $z$ are complex $m$-vectors, and all other scalars and vectors are real.
If $z = (z_1,\ldots,z_m)$, 
and $\mathbf{w}(x) := (w_1(x), \ldots, w_m(x))$ are the coordinate functions of $\mathbf{w}$,
then $\langle z, \mathbf{w}(x) \rangle = z_1 w_1(x) + \cdots + z_m w_m(x)$.
The variables $y_0, \tau, \tau_0,\text{ and } w_{j,n}$ are real $m$-vectors.

Recall the Laplace Transform of the volume function $\vol(y) := \mu_k(H_y \cap Q^n)$ from Corrolary \ref{cor:volume}
\[
\Vol(s) :=  \mathcal{B}\{v\}(s) = \norm{\wvec_1 \wedge \cdots \wedge \wvec_m} \prod_{j=1}^n P( W_j^T s ) 
\]
or using the definition of $K(s)$
\begin{equation*}
\Vol(s) = \norm{\wvec_1 \wedge \cdots \wedge \wvec_m} \prod_{j=1}^n \exp( K(-W_j^T s ) ) 
~=~ \norm{\wvec_1 \wedge \cdots \wedge \wvec_m}  \exp \left( \sum_{j=1}^n  K(-W_j^T s ) \right)
\end{equation*}
where $W_j^T$ is the $j$'th row of $W^T := [\wvec_1,\ldots,\wvec_m]$.
Recall that in our case
\begin{equation}\label{eqn:wjn}
W_j = \int_{I_{j,n}} \mathbf{w}(x) \, dx  \hspace{20pt} \text{and define} ~  w_{j,n} := n W_j = n \int_{I_{j,n}} \mathbf{w}(x) \, dx  \in \mathbb{R}^m
\end{equation}

Using the multidimensional complex inversion formula, \cite{brychkov1992} p. 100, we can recover $\vol(y)$ from $\Vol(s)$ with
\[
\vol(y) = (2\pi i)^{-m} \lim_{\upsilon_k \to \infty} \int_{c_1-i\upsilon_1}^{c_1+i\upsilon_1} \int_{c_2-i\upsilon_2}^{c_2+i\upsilon_2} \ldots  \int_{c_m-i\upsilon_m}^{c_m+i\upsilon_m} e^{\langle s,y \rangle} \Vol(s) \, ds_m \ldots ds_2 \, ds_1
\]
where 
$s := (s_1,\ldots,s_m)$ and $s_k = c_k + i\upsilon_k$ and
$(c_1,\ldots,c_m)$ is any real $m$-vector in the interior of the region of convergence of $\Vol(s)$.
The integral is understood in the sense of the principal value,
and we think of it as built from $m$ vertical lines (later perturbed to vertical paths) in $\mathbb{C}$, each one crossing the real axis.
A more thorough discussion in the univariate case is in Chapter VI of Widder \cite{Widder}.

Following \cite{brychkov1992} it is convenient to use the abbreviated notation
\[
\vol(y) = (2\pi i)^{-m} \int_{(c)} e^{\langle s,y \rangle} \Vol(s) \, ds    \hspace{20pt} \text{where }c = (c_1,\ldots,c_m) \in \mathbb{R}^m
\]
It is also convenient to make the substitution $z = -s/n$ to get
\[
\vol(y) = \left( \frac{n}{2\pi i} \right)^m \int_{(\tau)} \exp(-n{\langle z,y \rangle}) \Vol(-nz) \, dz    \hspace{20pt} \text{where }\tau \in \mathbb{R}^m
\]
Now substitute the above expression for $\Vol(s)$
\begin{equation}\label{eqn:volume3}
\vol(y) ~=~ \norm{\wvec_1 \wedge \cdots \wedge \wvec_m} \left( \frac{n}{2\pi i} \right)^m \mathop{\mathlarger{\mathlarger{\int}}}_{(\tau)} \exp \left( \sum_{j=1}^n  K(n W_j^T z) - n{\langle z,y \rangle} \right) \, dz
\end{equation}
Use the fact that $n W_j = w_{j,n}$ from \ref{eqn:wjn},and rewrite the exponent in the integral as
\[
\left( \sum_{j=1}^n K( \langle w_{j,n}, z \rangle ) - n \int_0^1 K(  \langle z, \mathbf{w}(x) \rangle )\,dx \right) ~+~  n \left( \int_0^1 K(  \langle z, \mathbf{w}(x) \rangle )\,dx -  \langle y, z \rangle \right)
\]
After reorganizing, we have
\begin{equation}\label{eqn:volume2}
\vol(y) = \norm{\wvec_1 \wedge \cdots \wedge \wvec_m}  \left( \frac{n}{2\pi i} \right)^m  \int_{(\tau)} \phi_n(z) f(z)^n \, dz
\end{equation}
where
\begin{equation}\label{eqn:phi_n}
\phi_n(z) := \exp \left[ \sum_{j=1}^n K( \langle w_{j,n}, z \rangle ) - n \int_0^1 K(  \langle z, \mathbf{w}(x) \rangle )\,dx  \right]  ~\hspace{10pt} w_{j,n}{:=n}\int_{I_{j,n}} \mathbf{w}(x) \, dx
\end{equation}
or
\begin{align*}
\phi_n(z) &:= \exp \left[ \sum_{j=1}^n  \left( K( \langle w_{j,n}, z \rangle ) - n \int_{I_{j,n}} K( \langle z, \mathbf{w}(x) \rangle ) \, dx  \right) \right]    \\
f(z) &:= \exp  \left[ h(z) \right]  \hspace{20pt}  h(z) := \int_0^1 K(  \langle z, \mathbf{w}(x) \rangle )\,dx  ~-~ \langle y, z \rangle 
\end{align*}
The purpose for all this reorganization is that \ref{eqn:volume2} is now in the form that the Laplace Approximation can be used.
Note that $f(z)$ does not depend on $n$, which one of the chief requirements.
Before proceeding we have an easy

\begin{lemma}\label{lemma:alpha}
If $0 < \alpha_1 \le \alpha_2$ and $\delta > 0$, then there is a $C$, depending on $\alpha_1$, $\alpha_2$, and $\delta$, so that
\[
-\alpha_2 \log\abs{\upsilon} \le -\alpha_1 \log\abs{\upsilon} + C \hspace{20pt} \text{for } \abs{\upsilon} \ge \delta
\]
\end{lemma}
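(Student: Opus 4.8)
The plan is to reduce the claim to a one-variable monotonicity observation. First I would subtract $\alpha_1\log\abs{\upsilon}$ from both sides, so that the asserted inequality becomes equivalent to
\[
(\alpha_1 - \alpha_2)\log\abs{\upsilon} \le C \qquad \text{for all } \abs{\upsilon}\ge\delta.
\]
Since $0 < \alpha_1 \le \alpha_2$ we have $\alpha_1 - \alpha_2 \le 0$, so the left-hand side is a nonincreasing function of $\log\abs{\upsilon}$, hence of $\abs{\upsilon}$. On the region $\abs{\upsilon}\ge\delta$ we have $\log\abs{\upsilon}\ge\log\delta$, and therefore $(\alpha_1-\alpha_2)\log\abs{\upsilon}\le(\alpha_1-\alpha_2)\log\delta$. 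Thus it suffices to take $C := (\alpha_1-\alpha_2)\log\delta$, or any larger number; this $C$ depends only on $\alpha_1$, $\alpha_2$, and $\delta$, as required. If a nonnegative constant is preferred, one may instead take $C := \max\{0,\,(\alpha_2-\alpha_1)\log(1/\delta)\}$, and the degenerate case $\alpha_1=\alpha_2$ is then covered with $C=0$.

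There is really no obstacle here: the statement is an elementary bookkeeping fact of the kind that, in the later estimates in the proof of the Main Theorem, lets one absorb a stronger logarithmic decay into a weaker one at the cost of an additive constant. The only point worth a moment's thought is that $\log\delta$ can be positive or negative according to whether $\delta\ge 1$, but since $C$ is permitted to be any real number — in particular negative — this causes no difficulty.
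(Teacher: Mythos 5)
Your proof is correct and is essentially the same as the paper's: both reduce the claim to $(\alpha_1-\alpha_2)\log\abs{\upsilon}\le(\alpha_1-\alpha_2)\log\delta$ for $\abs{\upsilon}\ge\delta$ and take $C=(\alpha_1-\alpha_2)\log(\delta)$. Nothing further is needed.
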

\begin{proof}
\begin{align*}
\abs{\upsilon} \ge \delta ~\implies~ -(\alpha_2 - \alpha_1)\log\abs{\upsilon} &\le -(\alpha_2 - \alpha_1)\log(\delta) \\
-\alpha_2 \log\abs{\upsilon} &\le -\alpha_1 \log\abs{\upsilon} + (\alpha_1 - \alpha_2)\log(\delta)
\end{align*}
\end{proof}

\begin{lemma}\label{lemma:volume}
With $\Lambda_\mathbf{w}$ as above and $y_0 \in \operatorname{int}(\Lambda_\mathbf{w}(Q^\infty))$,
then the saddlepoint equation \ref{eqn:saddlepoint} has a unique solution $\tau_0$, 
and the volume of the cube section 
$\vol(y_0) := \mu_k(\Lambda_\mathbf{w}^{-1}(y_0) \cap Q^n)$ 
has the asymptotic expression
\begin{equation}\label{eqn:denom1}
\vol(y_0)  ~\sim~   \norm{\wvec_1 \wedge \cdots \wedge \wvec_m} { \left( \frac{n}{2\pi} \right) }^\frac{m}{2}   \abs{\det(f''(\tau_0))}^{-\frac{1}{2}}  \phi_n(\tau_0) {f(\tau_0)}^{n+\frac{m}{2}}       \hspace{10pt}(n \to \infty)
\end{equation}
\end{lemma}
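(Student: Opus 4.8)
The plan is to treat the two claims in turn: the uniqueness of $\tau_0$ follows at once from the results of Section~\ref{sec:image}, and the asymptotic formula is obtained by applying the Laplace Approximation, Theorem~\ref{thm:LaplaceMethod.m}, to the contour integral~\ref{eqn:volume2}.

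\emph{Existence and uniqueness of the saddlepoint.} The gradient of $h(z)=\int_0^1 K(\langle z,\mathbf{w}(x)\rangle)\,dx-\langle y_0,z\rangle$ is $\nabla h(z)=G_\sigma(z)-y_0$, where $\sigma=K'$ and $G_\sigma$ is the map~\ref{eqn:Gsigma}; thus the saddlepoint equation~\ref{eqn:saddlepoint} is exactly $G_\sigma(\tau)=y_0$. Since $\sigma=K'$ is a squashing function and $G_\sigma\colon\mathbb{R}^m\to\bodyint$ is a diffeomorphism by Theorem~\ref{thm:diffeo}, the hypothesis $y_0\in\bodyint$ gives a unique $\tau_0$. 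Its Hessian $h''(\tau_0)=\mathbf{D}G_\sigma(\tau_0)$ is positive-definite by Lemma~\ref{lemma:PD}, so $\tau_0$ is the global minimum of $h$ on $\mathbb{R}^m$; extending $h$ holomorphically near $\tau_0$ (legitimate since $K$ is analytic in a neighborhood of $\mathbb{R}$, cf.\ the Remark after Lemma~\ref{lemma:Kr}), it is a genuine saddlepoint — a minimum in the real directions and, the Hessian along $z=\tau_0+i\upsilon$ being $-h''(\tau_0)$, a strict local maximum of $\Re h$ there.

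\emph{Applying the Laplace Approximation.} Specialize the free parameter in~\ref{eqn:volume2} to $\tau=\tau_0$ (permissible because $P$, hence $\Vol$, is entire, so the region of convergence is all of $\mathbb{C}^m$), parametrize the vertical contour by $z=\tau_0+i\upsilon$ with $\upsilon\in\mathbb{R}^m$, and absorb the factor $i^m$ from $dz=i^m\,d\upsilon$ so that the constant $(n/2\pi i)^m$ becomes $(n/2\pi)^m$. I would then apply Theorem~\ref{thm:LaplaceMethod.m} with $x_0:=0$, with ``$f$'' the function $\upsilon\mapsto f(\tau_0+i\upsilon)$ and ``$\phi_n$'' the function $\upsilon\mapsto\phi_n(\tau_0+i\upsilon)$ of~\ref{eqn:phi_n}: the critical-point and negative-definiteness clauses of hypothesis~(a) are the saddlepoint facts just established (the Hessian in $\upsilon$ at $0$ equals $-f(\tau_0)\,h''(\tau_0)$, whose determinant has modulus $\abs{\det f''(\tau_0)}$), and substituting these into the conclusion, then restoring the prefactor $\norm{\wvec_1\wedge\cdots\wedge\wvec_m}(n/2\pi)^m$, gives precisely~\ref{eqn:denom1}. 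Hypotheses~(c),(d) on $\phi_n$ are where the step-function hypothesis first earns its keep: once $n$ is large enough that every $I_{j,n}$ either lies inside a single piece of $\mathbf{w}$ or contains one of the finitely many breakpoints, each summand in the exponent of~\ref{eqn:phi_n} vanishes on the interior intervals, leaving at most a fixed number of summands that are controlled uniformly in $n$ on a small polydisc about $\tau_0$ by the analyticity of $K$ near $\mathbb{R}$; hence $\norm{\phi_n}_{C^1}$ is bounded above and $\abs{\phi_n(\tau_0)}$ bounded below, uniformly in $n$, so~(c),(d) hold and the exponent $k$ in~(e) may be taken to be $0$.

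\emph{The main obstacle.} The real work is in the non-local hypotheses~(b) and~(e): that $\abs{f(\tau_0+i\upsilon)}=e^{\Re h(\tau_0+i\upsilon)}$ stays strictly below $f(\tau_0)=e^{h(\tau_0)}$ away from $\upsilon=0$ and decays polynomially as $\abs{\upsilon}\to\infty$. For bounded $\upsilon\neq0$ the strict sub-maximality follows from $K_r(\tau+i\upsilon)\le K(\tau)$ (Lemma~\ref{lemma:Kr}(b)) together with the linear independence of $w_1,\dots,w_m$, which prevents $\langle\upsilon,\mathbf{w}(\cdot)\rangle$ from vanishing almost everywhere and hence makes the inequality strict inside the integral. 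For large $\upsilon$ the decomposition $\phi_n\cdot f^n$ is not available (the branch of $K$ fails once $\langle\tau_0+i\upsilon,\mathbf{w}(x)\rangle$ leaves the analyticity strip), so instead I would estimate $\abs{\Vol(-nz)}=\prod_j e^{K_r(\langle w_{j,n},z\rangle)}$ directly: $\mathbf{w}$ takes finitely many values $\mathbf{c}_1,\dots,\mathbf{c}_L$ which span $\mathbb{R}^m$ by linear independence, so for every $\upsilon$ there is an $\ell$ with $\abs{\langle\upsilon,\mathbf{c}_\ell\rangle}\ge c\abs{\upsilon}$ (with $c>0$ uniform), and then Lemma~\ref{lemma:Kr}(d) (equivalently Corollary~\ref{cor:Kr}(a)) yields $\Re h(\tau_0+i\upsilon)\le-\lambda_{\min}\log\abs{\upsilon}+C$ for $\abs{\upsilon}$ large, uniformly in direction, where $\lambda_{\min}>0$ is the shortest piece length. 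This delivers~(b) (the supremum at infinity tends to $-\infty$) and~(e) at once: for $n_0>m/\lambda_{\min}$ the tail integrand is dominated by a constant multiple of the integrable function $\abs{\upsilon}^{-n_0\lambda_{\min}}$ on $\{\abs{\upsilon}\ge\delta\}$, with no power of $n$ needed. It is exactly this coercivity of $\Re h$ at infinity that would break down for an arbitrary essentially bounded $\mathbf{w}$, and this is why the Main Theorem is restricted to step functions.
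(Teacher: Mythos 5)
Your overall route --- identifying the saddlepoint through the diffeomorphism $G_\sigma$ of Theorem \ref{thm:diffeo} and then feeding the inversion integral \ref{eqn:volume2} into the Laplace Approximation --- is the paper's route, and your handling of uniqueness, of hypotheses $(c)$--$(d)$, and of the coercivity bound $\Re h(\tau_0+i\upsilon)\le-\alpha\log\abs{\upsilon}+C$ matches the paper's in substance. But there are two genuine gaps. First, hypothesis $(a)$ of Theorem \ref{thm:LaplaceMethod.m} requires the ``$f$'' of the integrand to be \emph{real and positive on a whole compact neighborhood} $B_0$ of the critical point, not merely at the point; on the straight vertical contour $z=\tau_0+i\upsilon$ the function $\upsilon\mapsto f(\tau_0+i\upsilon)$ is genuinely complex for $\upsilon\neq0$: the saddlepoint equation kills the first-order imaginary part of $h$, but the third-order term $-\tfrac16\int_0^1\langle\upsilon,\mathbf{w}\rangle^3K'''(\langle\tau_0,\mathbf{w}\rangle)\,dx$ survives. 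So the theorem cannot be cited as stated on that contour. The paper first deforms the $m$ paths by the Complex Morse Lemma so that $f$ is real on the paths near $\tau_0$ (and, when a component of $\tau_0$ vanishes, perturbs the path away from the real axis to $\tau_0'$), and only then applies the approximation; you would need that deformation, or a strengthened approximation theorem that tolerates the residual oscillatory phase. (A smaller point of the same kind: your argument for $(b)$ on a bounded annulus gives only pointwise strict inequality; converting it into a bound on the supremum needs a compactness/upper-semicontinuity step or the paper's finitely-many-$N_{\hat{\upsilon}}$ uniformity argument.)

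Second, your verification of hypothesis $(e)$ controls only $\abs{f}$. Hypothesis $(e)$ concerns $\int\abs{\phi_n f^{n_0}}$ for a fixed $n_0$, and $\abs{\phi_n(\tau_0+i\upsilon)}$ is \emph{not} bounded on the tail: at each jump interval the exponent of $\phi_n$ contains $-\lambda_1K(\langle y_2,z\rangle)-\lambda_2K(\langle y_1,z\rangle)$, and since $K_r(\tau+it)\sim-\log\abs{t}$ as $\abs{t}\to\infty$, these terms contribute growth of order $+\log\abs{\upsilon}$, so $\abs{\phi_n}$ grows polynomially; the paper's claim 2 establishes $\abs{\phi_n}\le K_2\abs{\upsilon}^{J}$, and proving it is exactly where the perturbed $\tau_0'$ (so that $\langle\tau_0',\omega_k\rangle\neq0$) and Corollary \ref{cor:Kr}$(b)$ are needed. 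Consequently ``dominated by a constant multiple of $\abs{\upsilon}^{-n_0\lambda_{\min}}$ once $n_0>m/\lambda_{\min}$'' is not justified; one must take $n_0>(J+m)/\alpha$ as in the paper. Your fallback of bounding $\abs{\Vol(-nz)}=\abs{\phi_n f^{\,n}}$ directly (which indeed avoids any lower bound on $K_r$, since only upper bounds enter) does not verify $(e)$ as stated --- the exponent there is the fixed $n_0$, not $n$ --- so it would amount to reworking the proof of the Laplace Approximation rather than citing it. Incidentally, your stated reason for abandoning the $\phi_n\cdot f^{\,n}$ decomposition at large $\abs{\upsilon}$ (failure of the branch of $K$) is moot for modulus estimates: $K_r=\log\abs{P(-\cdot)}$ is defined on all of $\mathbb{C}$, which is precisely how the paper runs its claims 1 and 2 over the entire tail.
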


\begin{proof}
We seek a real critical point $\tau_0$ of $f(z)$ that we can substitute in \ref{eqn:volume2}
and apply the saddlepoint method.
A trivial calculation shows that $f(z)$ and $h(z)$ have the same critical points.
By calculating partial derivatives of $h(z)$ and setting to $0$, 
$\tau_0$ must be a root of the $m$-dimensional saddlepoint equation
\begin{align*}
\int_0^1 K'( \langle \tau, \mathbf{w}(x) \rangle ) \mathbf{w}(x)\,dx  ~~=~~ y_0
\end{align*}
Since the left side is $G_{K'}(\tau)$, there is a unique root $\tau_0$ by Theorem \ref{thm:diffeo}.
Apply the Complex Morse Lemma to deform the $m$ paths of integration,
so that for $z$ on the paths and near $\tau_0$,
$f(z)$ is real and the multivariate Laplace Approximation can be used.
Note that since $f(z)$ is analytic, it is certainly of class $C^4$ as required.
Also note that for $K(z)$ to be defined, all paths must be in the simply-connected $\Omega \subset \mathbb{C}$.
So if any component of $\tau_0$ is $0$, deform the corresponding path slightly,
but only away from the real axis, so that its real part is non-zero.  
Denote the real values on the paths away from $\mathbb{R}^m$ by $\tau'_0$.
The path homotopies do not change the value of the integral \ref{eqn:volume2}.

We must check conditions $(a),\ldots,(e)$ of the Laplace Approximation \ref{thm:LaplaceMethod.m}.
The restriction of $f(z)$ to $\mathbb{R}^m$ is denoted by $f(\tau) = f(\tau_1,\ldots,\tau_m)$.
The restriction of $f(z)$ to the plane of integration $\tau_0 + i\upsilon$ is denoted by $f(\upsilon) = f(\upsilon_1,\ldots,\upsilon_m)$.
And similarly for $h(z)$.

For part $(a)$, a trivial calculation shows that the Hessian of $f(\tau)$ at $\tau_0$
is a positive multiple of the Hessian of $h(\tau)$ at $\tau_0$.
But the latter Hessian is the same as the Jacobian $(\mathbf{D}G_{K'})_{ij}$ in equation \ref{eqn:DG}.
We saw there that the matrix is positive-definite, and so the Hessian of $f(\tau)$ at $\tau_0$ is positive-definite too.
Therefore the Hessian of $f(\upsilon)$ at $0$ is negative definite and $(a)$ is shown.

For part $(b)$ our goal is to show that for any $\delta > 0$
\begin{equation*}
\sup_{\abs{\upsilon}\ge\delta} \Re \left[ h(\tau + i\upsilon) \right]  ~~<~~ \Re \left[ h(\tau) \right]
\end{equation*}
Since we are only looking at the real part, the $\langle y_0, z \rangle$ term in $h(z)$ drops out
and we are left with just the integral term in $h(z)$.
Let the step function $\mathbf{w}$ be defined on $N$ subintervals of length $\mu_k{>}0$ with value $\omega_k \in \mathbb{R}^m$, $k{=}1,\ldots,N$.
Our goal is then to show
\begin{equation}\label{eqn:goalb}
\sup_{\abs{\upsilon}\ge\delta} \sum_{k=1}^N \mu_k K_r( \langle \tau, \omega_k \rangle  + i \langle \upsilon, \omega_k \rangle ) ~<~ \sum_{k=1}^N \mu_k K_r( \langle \tau, \omega_k \rangle )
\end{equation}

We are examining the $\sup$ over the region 
$\{ \upsilon \in \mathbb{R}^m : \abs{\upsilon}\ge\delta \}$, 
so it is convenient to use the \emph{spherical decomposition} $\upsilon = \abs{\upsilon}\hat{\upsilon}$ for unit vector $\hat{\upsilon} \in S^{m-1}$.
We take a fixed $\hat{\upsilon}$ and bound the $\sup$ over the corresponding ray to $\infty$,
and then go over the entire sphere $S^{m-1}$.
The $K_r$ terms on the left side become 
$K_r( \langle \tau, \omega_k \rangle  + i \langle \hat{\upsilon}, \omega_k \rangle \abs{\upsilon} )$.
Apply Lemma \ref{lemma:Kr} with $\beta_1{=}0$ and $\beta_2{=}\langle \hat{\upsilon}, \omega_k \rangle$ 
to conclude that for a fixed $\hat{\upsilon}$ and for \emph{every} $k$
\begin{equation}\label{eqn:b1}
\sup_{\abs{\upsilon}\ge\delta} K_r( \langle \tau, \omega_k \rangle  + i \langle \hat{\upsilon}, \omega_k \rangle \abs{\upsilon} ) ~\le~ K_r( \langle \tau, \omega_k \rangle )
\end{equation}
Since the right side does not depend on $\hat{\upsilon}$, we can let $\hat{\upsilon}$ range over the entire sphere
and conclude that \ref{eqn:b1} is true for \emph{all} $\abs{\upsilon}\ge\delta$.

Define the function $F(\hat{\upsilon}) := \max_k \abs{\langle \hat{\upsilon}, \omega_k \rangle}$  on  $S^{m-1}$.
$F(\hat{\upsilon})$ is continuous and non-negative, and it cannot vanish because that would imply that the functions $w_1,\ldots,w_m$ are linearly dependent, which is false.
Let $F_{\min} := \min_{\hat{\upsilon} \in S^{m-1}} F(\hat{\upsilon})$; 
$F_{\min}$ is positive because $S^{m-1}$ is compact.
So for every $\upsilon{\ne}0$, $\abs{\langle \hat{\upsilon}, \omega_k \rangle} \ge F_{\min}$, for \emph{some} $k$.
Apply \ref{lemma:Kr} again with $\beta_1{=}F_{\min}$ to conclude that 
for a fixed $\hat{\upsilon}$ there are \emph{some} $k$ (meaning 1 or more $k$ depending on $\hat{\upsilon}$), with
\begin{equation*} 
\sup_{\abs{\upsilon}\ge\delta} K_r( \langle \tau, \omega_k \rangle  + i \langle \hat{\upsilon}, \omega_k \rangle \abs{\upsilon} ) 
~\le~  \sup_{\abs{\upsilon}\ge\delta}K_r( \langle \tau, \omega_k \rangle  + i F_{\min} \abs{\upsilon} )   
~<~ K_r( \langle \tau, \omega_k \rangle )
\end{equation*}
The middle term does not depend on $\hat{\upsilon}$ directly,
but only indirectly through $k$, of which there are only finitely many, and so we can conclude
that for \emph{all} $\hat{\upsilon}$ there are \emph{some} $k$ (depending on $\hat{\upsilon}$) with
\begin{equation}\label{eqn:b2}
\sup_{\abs{\upsilon}\ge\delta} K_r( \langle \tau, \omega_k \rangle  + i \langle \hat{\upsilon}, \omega_k \rangle \abs{\upsilon} )  ~<~ K_r( \langle \tau, \omega_k \rangle )
\end{equation}
Multiply \ref{eqn:b1} and \ref{eqn:b2} by $\mu_k$ and sum (while dropping appropriate terms from \ref{eqn:b1}) to get
\begin{equation}
\sum_{k=1}^N  \mu_k   \sup_{\abs{\upsilon}\ge\delta}    K_r( \langle \tau, \omega_k \rangle  + i \langle \upsilon, \omega_k \rangle ) ~<~ \sum_{k=1}^N \mu_k K_r( \langle \tau, \omega_k \rangle )
\end{equation}
Move the $\sup$ outside of the sum and left side stays the same, or become smaller.
But the new expression is the left side of our goal \ref{eqn:goalb} and so this concludes part $(b)$.

Part $(c)$ is trivial.

For part $(d)$, write:
\begin{equation}\label{eqn:g_n}
\phi_n(z) := \exp \left[ g_n(z) \right] \hspace{5pt}  g_n(z) :=  \sum_{j=1}^n  \left( K( \langle w_{j,n}, z \rangle ) - n \int_{I_{j,n}} K( \langle z, \mathbf{w}(x) \rangle ) \, dx  \right)
\end{equation}
$\mathbf{w}(x)$ is now a step function with $J$ jumps.
Choose $n$ so large that each subinterval $I_{j,n}$ contains at most one jump.
If there are \emph{no} jumps on $I_{j,n}$ then $\mathbf{w}$ has the constant value $w_{i,j}$ there, and
\[
n \int_{I_{j,n}} K( \langle z, \mathbf{w}(x) \rangle ) \, dx  ~=~ n K( \langle z, w_{i,j} \rangle ) \int_{I_{j,n}} 1 \, dx ~=~ K( \langle z, w_{i,j} \rangle )
\]
Thus the $j'th$ term in $g_n(z)$ vanishes, and the sum has only $J$ non-zero terms.
Let $I_{j,n}$ be a subinterval with a jump at $x_0 = \lambda_1 (j-1)/n + \lambda_2 j/n$
where $\lambda_1 + \lambda_2 = 1$ and $\lambda_1,\lambda_2 \ge 0$.
And suppose  $\mathbf{w}(x)$ jumps from $y_1$ to $y_2$ at $x_0$.
Then the corresponding term in $g_n(z)$ is
\begin{equation}\label{eqn:jumpd}
K( \lambda_1 \langle y_2,z \rangle  +  \lambda_2 \langle y_1,z \rangle)  ~-~ \lambda_1 K( \langle y_2,z \rangle ) - \lambda_2 K( \langle y_1,z \rangle )
\end{equation}
and so $g_n(z)$ is a sum of $J$ terms like this one.
The vectors $y_1$ and $y_2$ do not depend on $n$,
but $\lambda_1$ and $\lambda_2$ \emph{do} depend on $n$.
At the saddlepoint $\tau_0$ define $B_r := \{ \tau_0 + i\upsilon : \abs{\upsilon} \le r \} \subset \mathbb{C}^m$.
As $z$ varies over $B_r$, $\langle y_1, z \rangle$ varies over a vertical line segment centered at 
$\langle y_1, \tau_0 \rangle$ on the real axis,
and similarly for $\langle y_2, \tau_0 \rangle$.
As $z$ varies over $B_r$ and $n \to \infty$, 
$\lambda_1 \langle y_2,z \rangle  +  \lambda_2 \langle y_1,z \rangle$
varies over the convex hull of the 2 segments, which is a trapezoid.
In $g_n(z)$ and for $z \in B_r$, all arguments to $K()$ are in the compact set of $J$ trapezoids in $\mathbb{C}$.
If necessary, shrink $r$ so that all the trapezoids are inside the horizontal strip
$\{ \tau +i\upsilon : \abs{\upsilon}{<}2\pi \}$, where $K(s)$ is analytic.
For this statement about the strip, see \emph{Remark 1} after Lemma \ref{lemma:Kr}.
It follows that there are constants $C_1$ and $C_2$ so that
\[
-C_1 \le \Re \left[ g_n(z) \right] \le C_2   \hspace{30pt} \text{for } z \in B_r \text{ and } n \text{ sufficiently large}
\]
which implies
\[
\abs{\phi_n(z)} \le \exp(C_2) \text{ and } \abs{\phi_n(z)}^{-1} \le \exp(C_1) \hspace{10pt} \text{for } z \in B_r \text{ and } n \text{ sufficiently large}
\]
Similary, since $K'(z)$ is analytic on the same strip, there is a $C_3$ so
\[
\norm{\phi_n}_{C^1} \le C_3 \hspace{10pt} \text{for } n \text{ sufficiently large, where } \norm{\cdot}_{C^1} \text{ is the } C^1\text{-norm on } B_r
\]
and so
\[
\norm{\phi_n}_{C^1} \le C_3 \cdot 1  \le  C_3 \exp(C_1) \abs{\phi_n( \tau_0 )}
\]
and this concludes part $(d)$.

Part $(e)$ is the longest, and it is easier to divide it into 2 claims
\begin{enumerate}
\item For $\delta>0$, there are $\alpha > 0$ (depending on the step function $\mathbf{w}$)
and $K_1$ (depending on $\mathbf{w}$, $y_0$, and $\delta$) so that
\begin{equation*}
\abs{ f(\tau_0 + i\upsilon) } ~\le~ K_1 \abs{\upsilon}^{-\alpha}   \hspace{10pt} \text{for } \abs{\upsilon} \ge \delta
\end{equation*}

\item For $\delta>0$, there are $\beta > 0$ (depending on the step function $\mathbf{w}$ but not on $n$)
and $K_1$ (depending on $\mathbf{w}$ and $\delta$, but not on $n$)  so that
\begin{equation*}
\abs{ \phi_n(\tau_0 + i\upsilon) } ~\le~  K_2 \abs{\upsilon}^{\beta}    \hspace{10pt} \text{for } \abs{\upsilon} \ge \delta
\end{equation*}
\end{enumerate}

For claim 1, write
\[
h(\tau + i\upsilon) ~=~ \sum_{k=1}^N \mu_k K( \langle \tau, \omega_k \rangle  + i \langle \upsilon, \omega_k \rangle )  ~-~ \langle y_0, \tau + i\upsilon \rangle
\]
as in part (b) above.
Also take the function $F(\hat{\upsilon})$ and its minimum $F_{\min}$ from part (b).
Pick a fixed $\hat{\upsilon} \in S^{m-1}$ and define the set of indices
$N_{\hat{\upsilon}} := \{ k :~ \abs{\langle \hat{\upsilon}, \omega_k \rangle} \ge F_{\min}/2 \}$.
By the construction of $F_{\min}$, $N_{\hat{\upsilon}}$ is non-empty.
If $k \in N_{\hat{\upsilon}}$ then by Corollary \ref{cor:Kr}, 
\[
K_r( \langle \tau, \omega_k \rangle  + i \langle \upsilon, \omega_k \rangle ) ~\le~ -\log \abs{\upsilon}  -\log\abs{\langle \omega_k,\hat{\upsilon} \rangle}  + C
~\le~ -\log \abs{\upsilon}  -\log(F_{\min}/2) + C
\]
where this constant $C$ depends on $\langle \tau,\omega_k \rangle$.
If $k \notin N_{\hat{\upsilon}}$ then by Lemma \ref{lemma:Kr}, 
\[
K_r( \langle \tau, \omega_k \rangle  + i \langle \upsilon, \omega_k \rangle ) ~\le~ K_r( \langle \tau, \omega_k \rangle)
\]
Multiply these two inequalities and sum to get
\begin{equation}\label{eqn:claim1A}
\sum_{k=1}^N \mu_k K_r( \langle \tau, \omega_k \rangle  + i \langle \upsilon, \omega_k \rangle ) ~\le~ -\left[ \sum_{k\in N_{\hat{\upsilon}}} \mu_k \right] \log \abs{\upsilon}  ~+~ C
\end{equation}
where this $C$ depends on $\langle \tau,\omega_k \rangle$ for the $k$'s inside and outside $N_{\hat{\upsilon}}$, and on $F_{\min}$.
Since there are only finitely many subsets $N_{\hat{\upsilon}}$, this $C$ can be taken so large that it applies to \emph{all} $\hat{\upsilon}$.
Define
\[
\alpha := \min_{ \hat{\upsilon} \in S^{m-1} } \left[ \sum_{k\in N_{\hat{\upsilon}}} \mu_k \right]   ~>~  0
\]
Similarly, although the number of $\hat{\upsilon} \in S^{m-1}$ is infinite, 
there are only finitely many subsets $N_{\hat{\upsilon}}$, and so $\alpha > 0$.
Apply Lemma \ref{lemma:alpha} to get
\begin{equation}\label{eqn:claim1B}
-\left[ \sum_{k\in N_{\hat{\upsilon}}} \mu_k \right] \log\abs{\upsilon}  ~\le~  -\alpha \log\abs{\upsilon} ~+~ C'  \hspace{15pt} \text{for all }  \abs{\upsilon} \ge \delta
\end{equation}
where this $C'$ depends on $\delta$, $\alpha$, and all possible sums over $N_{\hat{\upsilon}}$.
Now combine inequalities  \ref{eqn:claim1A} and \ref{eqn:claim1B} and replace $C'+C$ by $C$  get 
\begin{equation*}
\sum_{k=1}^N \mu_k K_r( \langle \tau, \omega_k \rangle  + i \langle \upsilon, \omega_k \rangle ) ~\le~ -\alpha\log\abs{\upsilon} + C  \hspace{20pt} \text{for } \abs{\upsilon} \ge \delta
\end{equation*}
Replace $\tau$ by the saddlepoint $\tau_0$ and subtract $\langle y_0,\tau_0 \rangle$ from both sides.
\begin{align*}
\sum_{k=1}^N \mu_k K_r( \langle \tau_0, \omega_k \rangle  + i \langle \upsilon, \omega_k \rangle ) - \langle y_0,\tau_0 \rangle ~&\le~ -\alpha\log\abs{\upsilon} + C - \langle y_0,\tau_0 \rangle  \hspace{20pt} \text{for } \abs{\upsilon} \ge \delta \\
\Re \left[ h(\tau_0 + i\upsilon) \right] ~&\le~ -\alpha\log\abs{\upsilon} + C - \langle y_0,\tau_0 \rangle  \hspace{20pt} \text{for } \abs{\upsilon} \ge \delta \\
\end{align*}
Exponentiate both sides to get claim 1.

For claim 2, perturb $\tau'_0$ again if necessary so that $\langle \tau'_0,\omega_k \rangle \neq 0$ for every $\omega_k \neq 0$.
We are using $\tau'_0$ here because we only care about large $\abs{\upsilon}$.
From part $(d)$ we know that $\abs{\phi_n( \tau'_0 + i\upsilon)} = \exp [ \Re( g_n(\tau'_0 + i\upsilon) ) ]$
where $g_n$ is the sum of $J$ (${=}N-1)$ terms of the form
\begin{equation}\label{eqn:claim2A}
K_r( \langle \bar{y},\tau'_0 \rangle + i\langle \bar{y},\upsilon \rangle) - \lambda_1 K_r(\langle y_2,\tau'_0 \rangle + i\langle y_2,\upsilon \rangle) - \lambda_2 K_r(\langle y_1,\tau'_0 \rangle + i\langle y_1,\upsilon \rangle)
\end{equation}
The vectors $y_1$ and $y_2$ are taken from the set of all $\omega_k, k=1,\ldots,N$.
The vector $\bar{y} = \lambda_1 y_2 + \lambda_2 y_1$ depends on $n$ (since $\lambda_1$ and $\lambda_2$ depend on $n$),
but $\bar{y}$ is on the compact line segment $[y_1,y_2]$ and so $\langle \bar{y},\tau'_0 \rangle$ is bounded for all $n$, and for all $J$ jumps.
So there is a constant $C$ so that $K_r(\langle \bar{y},\tau'_0 \rangle) \le C$.
By Lemma \ref{lemma:Kr}
\begin{equation}\label{eqn:claim2B}
K_r( \langle \bar{y},\tau'_0 \rangle + i\langle \bar{y},\upsilon \rangle) ~\le~ K_r(\langle \bar{y},\tau'_0 \rangle) ~\le~ C
\end{equation}
In the singular cases $\langle \bar{y},\tau'_0 \rangle {=} 0$ and $\langle \bar{y},\upsilon \rangle {=} 2\pi j (j\neq 0)$
there is not problem, since then the left side is $-\infty$.
This bounds the first term in \ref{eqn:claim2A}.
For the $y_2$ term, note that if $y_2=0$ then the term is $-\lambda_1 K_r(0) = 0$ and it can be ignored.
If $y_2\neq 0$ then $\langle y_2,\tau'_0 \rangle \neq 0$ by the choice of $\tau'_0$.
So by Corollary \ref{cor:Kr} there is a $C'$ so that
\begin{equation}\label{eqn:claim2C}
- \lambda_1 K_r(\langle y_2,\tau'_0 \rangle + i\langle y_2,\upsilon \rangle) ~\le~ \lambda_1 \log \abs{\upsilon} + \lambda_1 C' \hspace{20pt} \text{for } \abs{\upsilon} \ge \delta
\end{equation}
and the same $C'$ can be chosen for all $J$ jumps.
Similarly, for the $y_1$ term
\begin{equation}\label{eqn:claim2D}
- \lambda_2 K_r(\langle y_1,\tau'_0 \rangle + i\langle y_1,\upsilon \rangle) ~\le~ \lambda_2 \log \abs{\upsilon} + \lambda_2 C' \hspace{20pt} \text{for } \abs{\upsilon} \ge \delta
\end{equation}
Add up \ref{eqn:claim2B}, \ref{eqn:claim2C}, and \ref{eqn:claim2D} over all $J$ jumps to get
\[
\Re( g_n(\tau'_0 + i\upsilon) ) ~\le~ J ( C + \log\abs{\upsilon} + C' )  \hspace{20pt} \text{for } \abs{\upsilon} > \delta
\]
and exponentiate to get
\[
\abs{\phi_n( \tau'_0 + i\upsilon)} ~\le~ \exp(J(C+C')) \abs{\upsilon}^J  \hspace{20pt} \text{for } \abs{\upsilon} > \delta
\]
and this is claim 2.

Finally, from claims 1 and 2 we conclude that
\[
\abs{ \phi_n(\tau_0 + i\upsilon) }  \abs{ f(\tau_0 + i\upsilon) }^{n_0} \le K_1 K_2 \abs{\upsilon}^{\beta - n_0 \alpha}   \hspace{20pt} \text{for } \abs{\upsilon} \ge \delta
\]
Pick $n_0$ so $\beta -  n_0 \alpha < -m$ or equivalently $n_0 > (\beta + m)/\alpha$.
Then
\[
\int_{\abs{\upsilon} \ge \delta} \abs{ \phi_n(\tau_0 + i\upsilon) f(\tau_0 + i\upsilon)^{n_0} } \, d\upsilon ~\le~ K_1 K_2  \int_{\abs{\upsilon} \ge \delta} \abs{\upsilon}^{\beta -  n_0 \alpha} \,  d\upsilon
\]
Since $\beta -  n_0 \alpha < -m$, the value of the right side is finite; call it $M_\delta$.
We have already seen in part $(d)$ that there is a $C_1$ so $1 \le C_1 \abs{\phi_n(\tau_0)}$ and so
\[
\int_{\abs{\upsilon} \ge \delta} \abs{ \phi_n(\tau_0 + i\upsilon) f(\tau_0 + i\upsilon)^{n_0} } \, d\upsilon ~\le~ M_\delta C_1 \abs{\phi_n(\tau_0)}  \hspace{15pt} \text{for } \abs{\upsilon} \ge \delta \text{ and } n \text{ sufficiently large}
\]
and this is part $(e)$.
\end{proof}

\begin{proof}\label{proof:main} of the Main Theorem.
Pick a fixed $x_0 \in [0,1]$ which will remain fixed until near the end of this proof.
For each $n$ let $j_n$ be the index of $I_{j,n} \ni x_0$; the dependence of $j_n$ on $x_0$ is suppressed for simplicity.
Rewrite $\phi_n(z)$ slightly for this $x_0$.
In equation \ref{eqn:phi_n} in the exponent of $\phi_n(z)$ is a sum $j=1,\ldots,n$. 
Split off this one $j_n$ from the sum to get
\[
\phi_n(z) = \exp \left[ K( \langle w_{j_n,n},z \rangle ) ~+~ \sum_{j \neq j_n} K( \langle w_{j,n},z \rangle ) - n \int_0^1 K( \langle \mathbf{w}(x),z\rangle ) \, dx  \right ]
\]
or
\[
\phi_n(z) = P( -\langle z,w_{j_n,n}\rangle ) \exp \left[ \sum_{j \neq j_n} K( \langle w_{j,n},z \rangle ) - n \int_0^1 K( \langle \mathbf{w}(x),z\rangle ) \, dx  \right ]
\]
The point $x_0$ does not appear explicity here, only implicitly in the variable $j_n$.

From the definition of the centroid in equation \ref{eq:centroid}
\begin{equation}\label{eqn:centroid_n}
\operatorname{centroid}(H_{y_0} \cap Q^n )[j_n] :=  \frac{ \operatorname{moment}( H_{y_0} \cap Q^n ) [j_n] }{ \operatorname{vol}( H_{y_0} \cap Q^n ) }
\end{equation}
where the notation $[j_n]$ denotes the $j_n$-coordinate of a vector.
Lemma \ref{lemma:volume} gives an asymptotic expression for the volume in the denominator.
Now we want to derive one for the $j_n$-coordinate of the moment in the numerator.
To compute the $j_n$-coordinate of the moment amounts to replacing $\rho(t) = \mathbf{1}_{[0,1]}$ by
$\rho(t) = t \mathbf{1}_{[0,1]}$ at the $j_n$-slot in the product \ref{volumeintegral}.
This change propagates into Corollary \ref{cor:volume} where in the $j_n$-slot of this product,
$P$ is replaced by $\widehat{P}$ := $\mathcal{B} \{ t \mathbf{1}_{[0,1]} \} = -\mathcal{B} \{ \mathbf{1}_{[0,1]} \}' = -P'$.
This last step is the standard \emph{frequency-domain derivative rule} for $\mathcal{B}$.

This change in the $j_n$-slot propagates into the sequence of functions $\phi_n(z)$ to create a new sequence $\widehat{\phi}_n(z)$ given by
\[
\widehat{\phi}_n(z) = -P'( -\langle z,w_{j_n,n}\rangle ) \exp \left[ \sum_{j \neq j_n} K( \langle w_{j,n},z \rangle ) - n \int_0^1 K( \langle \mathbf{w}(x),z\rangle ) \, dx  \right ]
\]
\emph{If} we can show that $\widehat{\phi}_n(z)$ and $f(z)$ \emph{also} satisfies the conditions for the Laplace Approximation,
then it will follow from \ref{eqn:centroid_n} and Corollary \ref{cor:LaplaceMethod} that
\begin{equation}\label{eqn:Kprime}
\operatorname{centroid}(  H_{y_0} \cap Q^n )[j_n] \sim \frac{ \widehat{\phi}_n(\tau_0) }{ \phi_n(\tau_0) } = \frac{-P'( -\langle \tau_0,w_{j_n,n} \rangle ) }{P( -\langle \tau_0,w_{j_n,n} \rangle)} = K'(\langle \tau_0,w_{j_n,n} \rangle) \hspace{5pt} (n{\to}\infty)
\end{equation}
The last step is important, and follows directly from the definition $K(t) := \log(P(-t)$.

For parts $(a)$ and $(b)$ of the Approximation there is nothing to prove, since $f(z)$ is unchanged.

However $\phi_n(z)$ is modified. 
Write the modified $\widehat{\phi}_n(z) = \exp \left[ \widehat{g}_n(z) \right]$ where
\[
\widehat{g}_n(z) :=  \left( \widehat{K}( \langle w_{j_n,n}, z \rangle ) - n \int_{I_{j_n,n}} J(z,x) \, dx  \right) ~+~   \sum_{j \neq j_n}  \left( K( \langle w_{j,n}, z \rangle ) - n \int_{I_{j,n}} J(z,x) \, dx  \right)
\]
and where $J(z,x) := K( \langle z, \mathbf{w}(x) \rangle )$.
Compare this with \ref{eqn:g_n}.
The only difference is that in the very first term $K(s)$ is replaced by $\widehat{K}(s)$.
Recall from \emph{Remark 2} after Lemma \ref{lemma:Kr} that
$\widehat{K}(s) :=  \log ( \widehat{P}(-s) ) = \log ( -P'(-s) ) = \log( ((s-1)e^s + 1)/s^2 )$,
and $\widehat{K}_r(s):=\Re(\widehat{K}(s))$.

We can now see that part $(c)$ is trivial.

For part $(d)$, we see that $\widehat{g}_n(z)$ now has an extra term from $I_{j_n,n}$
\begin{equation*}
\widehat{K}( \lambda_1 \langle y_2,z \rangle  +  \lambda_2 \langle y_1,z \rangle)  ~-~ \lambda_1 K( \langle y_2,z \rangle ) - \lambda_2 K( \langle y_1,z \rangle )
\end{equation*}
which is nonzero whether there is a jump in interval $I_{j_n,n}$ or not.
Compare this term with equation \ref{eqn:jumpd}.
Since $\widehat{K}$ is analytic on the strip $\{ \tau +i\upsilon : \abs{\upsilon}{<}2\pi \}$
the same argument for part $(d)$ in Lemma \ref{lemma:volume} works here too.

For part $(e)$, claim 1 is about $f(z)$ which is unchanged, so there is nothing to do.
For part $(e)$ claim 2, the new term in $\widehat{g}_n(\tau'_0 + i\upsilon)$ is now
\begin{equation*} 
\widehat{K}_r( \langle \bar{y},\tau'_0 \rangle + i\langle \bar{y},\upsilon \rangle) - \lambda_1 K_r(\langle y_2,\tau'_0 \rangle + i\langle y_2,\upsilon \rangle) - \lambda_2 K_r(\langle y_1,\tau'_0 \rangle + i\langle y_1,\upsilon \rangle)
\end{equation*}
which is the same as equation \ref{eqn:claim2A} except for the $\widehat{K}$.
Once again, by \emph{Remark 2}, we know that 
$\widehat{K}_r( \langle \bar{y},\tau'_0 \rangle + i\langle \bar{y},\upsilon \rangle) \le \widehat{K}_r( \langle \bar{y},\tau'_0 \rangle )$
so the rest of claim 2 is the same as in the proof of of Lemma \ref{lemma:volume}.
This concludes part $(e)$, and the Laplace Approximation with $\widehat{\phi}_n(z)$ is justified.

Equation \ref{eqn:Kprime} is now verified and we can write
\[
\operatorname{centroid}(  H_{y_0} \cap Q^n )[j_n] ~\sim~  K'(\langle \tau_0,w_{j_n,n} \rangle) \hspace{25pt} (n \to \infty)
\]
This asymptotic equivalence holds for all $x_0$ and we think of both sides as functions of $x_0$ even though it does not appear explicity,
but only implicitly in the index $j_n$.
As $n \to \infty$ the intervals $I_{j_n,n}$ ``shrink nicely" to $x_0$,
and so $w_{j_n,n}$ converges to $\mathbf{w}(x_0)$ for almost all $x_0 \in [0,1]$.
This is the Lebesgue differentiation theorem, see \cite{Rudin} Theorem 8.8.
Actually, since $\mathbf{w}$ is a step function we do not need the full power of this theorem;
the sequence converges everywhere except at the jumps.
Since $K'(t)$ is continuous,
$K'( \langle \tau_0,w_{j_n,n} \rangle  )$ converges to $K'( \langle \tau_0,\mathbf{w}(x_0) \rangle )$ for almost all $x_0 \in [0,1]$.
By the asymptotic equivalence, $\operatorname{centroid}(  H_{y_0} \cap Q^n )[j_n]$ converges to $K'( \langle \tau_0,\mathbf{w}(x_0) \rangle )$ for almost all $x_0 \in [0,1]$.
Since $\abs{K'(t)}<1$ the Lebesgue Dominated Convergence theorem implies that $\operatorname{centroid}(  H_{y_0} \cap Q^n )[j_n]$
converges in $L^1[0,1]$ to $K'(\langle \tau_0,\mathbf{w} \rangle)$.
This is the expression for the centroid in the Main Theorem \ref{thm:main}, and so we are done.
\end{proof}

The saddlepoint equation \ref{eqn:saddlepoint} 
can be written in the form $\Lambda_\mathbf{w}( \sigma( \langle \tau_0, \mathbf{w} \rangle ) ) = y_0$,
which confirms the fact that the centroid of a convex set must lie in that set - a satisfying result.

In Figure \ref{fig:estimates-single} is an example when $m{=}1$.
In this case $\mathbf{w}{=}w_1{=}w$.
\begin{figure}[!ht]
    \centering
    \subfloat{\includegraphics[width=1.0\linewidth]{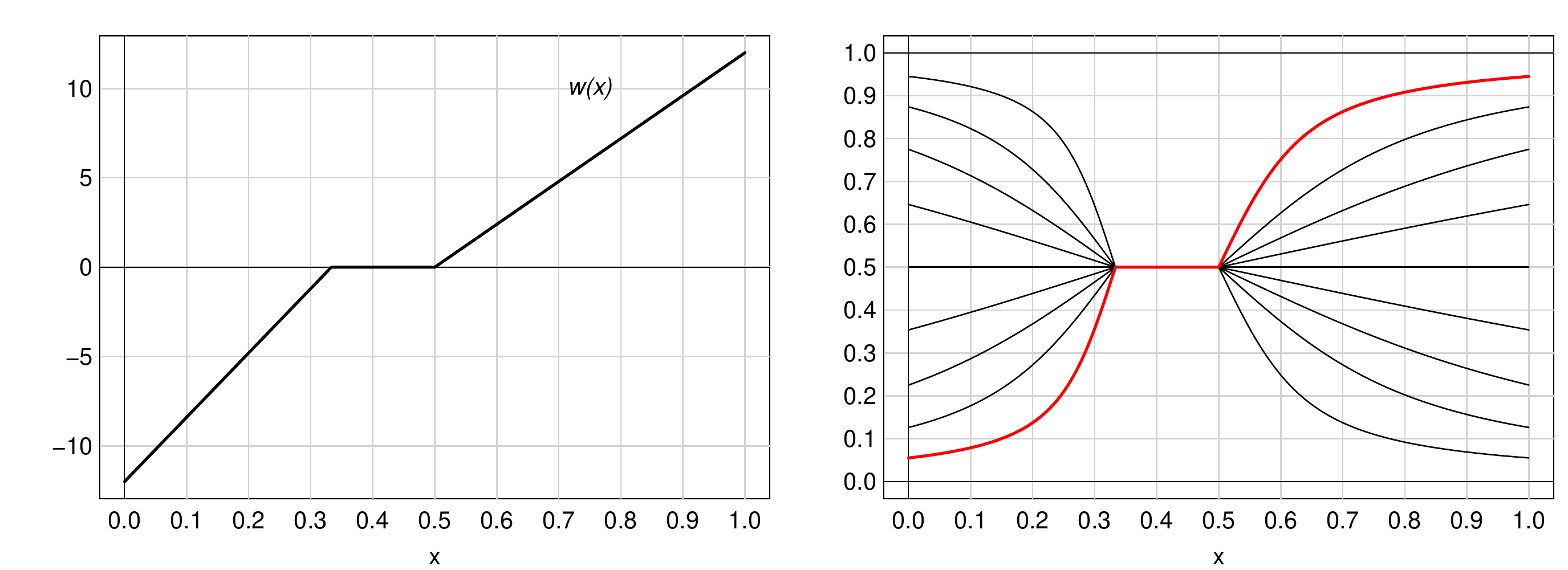}}    
    \caption{An example of centroids when $m{=}1$.
             In the left plot is $w(x)$, with corners at $1/3$ and $1/2$. 
             For this $w(x)$, $Z := [\int_0^1 \min(w(x),0)\,dx , \int_0^1 \max(w(x),0)\,dx ] = [-2,3]$. 
             The right plot shows the computed centroids at 9 equally spaced $y_0 = -3/2,-1,-1/2,\ldots,5/2$.
             For the last value $(y_0{=}5/2)$ the centroid is $f(x) \approx \sigma(1.51 w(x))$ and is plotted in red with a thicker line.
             For the center value $(y_0{=}1/2)$, the centroid is the constant function $f(x) \equiv 1/2$, which is also the centroid of $Q^\infty$.
             }
    \label{fig:estimates-single}
\end{figure}
Note that if $w(x)$ vanishes on a subinterval, 
then the centroid function takes the value $\sigma(0){=}1/2$ on that interval, as it should be.

\section{Reparameterization}\label{sec:reparameterization}

Operators frequently occur for functions on intervals other than [0,1].
In this section we consider an interval $[a,b]$,
and an essentially bounded $\mathbf{w} : [a,b] \to \mathbb{R}^m$.
Parameterize $[a,b]$ by $\lambda$ (think of wavelength), and $[0,1]$ by $\omega$.
$\mathbf{w}$ defines an operator $\Lambda_\mathbf{w} : L^1[a,b] \to \mathbb{R}^m$ given by $\Lambda_\mathbf{w}(f) = \int_a^b f(\lambda) \mathbf{w}(\lambda) \, d\lambda$.
For a given $y \in  \mathbb{R}^m$ we want to solve the operator equation $\Lambda_\mathbf{w}(f) = y$ for $f$ in the unit cube $Q_{[a,b]}^\infty$ of $L^1[a,b]$.

Consider an absolutely continuous homeomorphism $\varphi : [0,1] \to [a,b]$.
For simplicity abbreviate `absolutely continuous' by AC.
To parameterize [0,1] we use $\omega$ and so we write $\lambda = \varphi(\omega)$.
$\varphi$ induces isomorphisms $C_\varphi : L^1[a,b] \to L^1[0,1]$ and  $C_{\varphi^{-1}} : L^1[0,1] \to L^1[a,b]$
(both are \emph{composition operators}).
Note that these take the unit cube in one function space to the unit cube in the other.
The following composition is an operator from $L^1[0,1]$ to $\mathbb{R}^m$
\begin{equation}\label{eqn:reparam}
L^1[0,1] ~  {\overset{C_{\varphi^{-1}}}\longrightarrow}  ~  L^1[a,b] ~ {\overset{\Lambda_\mathbf{w}}\twoheadrightarrow}  ~ \mathbb{R}^m
\end{equation}
With the feasible region the unit cube,
we can solve it for $y$ by the centroid method, and then map the centroid (with respect to $\mathapprox{P}$)
from $L^1[0,1]$ to $L^1[a,b]$ by $C_{\varphi^{-1}}$,
which will be a solution to the original problem.
In symbols the ``reparameterized" solution is 
\begin{equation}\label{eqn:repapp}
C_{\varphi^{-1}} \left( \operatorname{centroid}_\mathapprox{P} \left( (\Lambda_\mathbf{w} \circ  C_{\varphi^{-1}})^{-1} (y) \cap Q^\infty \right) \right) =
\operatorname{centroid}_{C_{\varphi^{-1}}(\mathapprox{P})} \left( \Lambda_\mathbf{w}^{-1} (y) \cap Q_{[a,b]}^\infty \right)
\end{equation}
The right side follows from Proposition \ref{prop:infinite},
and gives an alternate interpretation.
The ``reparameterized" solution is the centroid with respect to the directed filtration $C_{\varphi^{-1}}(\mathapprox{P})$  of $L^1[a,b]$.
Recall that $V_n \in \mathapprox{P}$ is the subspace of step functions with jumps at $i/n, ~ i{=}1,\ldots,(n{-}1)$.
So $C_{\varphi^{-1}}(V_n)$ is the subspace of step functions with jumps at $\varphi(i/n), ~ i{=}1,\ldots,(n{-}1)$,
which are \emph{not} equally spaced in general.
The filtration $C_{\varphi^{-1}}(\mathapprox{P})$ is not uniform unless $\varphi$ is affine.

We know that the composition \ref{eqn:reparam} must be defined by some responsivity function on [0,1].
Here is the change of variable formula that we need:
\begin{prop}
Operator \ref{eqn:reparam} has the responsivity function $\omega \longmapsto \mathbf{w}(\varphi(\omega)) \varphi'(\omega)$.
\end{prop}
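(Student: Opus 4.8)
The plan is to prove the identity by unwinding the definitions of the two maps in \ref{eqn:reparam} and then applying a single change of variables. Fix $f \in L^1[0,1]$ and a coordinate index $k \in \{1,\ldots,m\}$, and compute the $k$-th component of $(\Lambda_\mathbf{w} \circ C_{\varphi^{-1}})(f)$. By definition $C_{\varphi^{-1}}(f) = f \circ \varphi^{-1} \in L^1[a,b]$, so that component is $\int_a^b f(\varphi^{-1}(\lambda))\, w_k(\lambda)\, d\lambda = \langle f\circ\varphi^{-1},\, w_k \rangle_{1,\infty}$, which is finite since $f\circ\varphi^{-1} \in L^1[a,b]$ (recall $C_{\varphi^{-1}}$ is an isomorphism) and $w_k \in L^\infty[a,b]$.

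Since $\varphi$ is a homeomorphism between intervals it is strictly monotone, and, as is implicit in the wavelength picture, we take it orientation-preserving, so $\varphi(0)=a$ and $\varphi(1)=b$. Now invoke the change-of-variables formula for absolutely continuous monotone maps: if $\varphi:[0,1]\to[a,b]$ is increasing and absolutely continuous, then for every $g \in L^1[a,b]$ the function $(g\circ\varphi)\,\varphi'$ lies in $L^1[0,1]$ and $\int_a^b g(\lambda)\,d\lambda = \int_0^1 g(\varphi(\omega))\,\varphi'(\omega)\,d\omega$. Apply this with $g(\lambda) := f(\varphi^{-1}(\lambda))\, w_k(\lambda)$; since $\varphi$ is a bijection, $g(\varphi(\omega)) = f(\omega)\, w_k(\varphi(\omega))$, so the $k$-th component becomes $\int_0^1 f(\omega)\, w_k(\varphi(\omega))\,\varphi'(\omega)\,d\omega$. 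Assembling the $m$ coordinates gives $(\Lambda_\mathbf{w}\circ C_{\varphi^{-1}})(f) = \int_0^1 f(\omega)\, \mathbf{w}(\varphi(\omega))\,\varphi'(\omega)\,d\omega$ for every $f\in L^1[0,1]$; that is, the operator \ref{eqn:reparam} equals $\Lambda_{\tilde{\mathbf{w}}}$ with $\tilde{\mathbf{w}}(\omega) := \mathbf{w}(\varphi(\omega))\,\varphi'(\omega)$.

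Finally, under the standing hypotheses that make $C_\varphi$ and $C_{\varphi^{-1}}$ isomorphisms of $L^1$ spaces, the derivative $\varphi'$ is essentially bounded, so $\tilde{\mathbf{w}}$ is essentially bounded and $\Lambda_{\tilde{\mathbf{w}}}$ is a bona fide continuous operator on $L^1[0,1]$; by the uniqueness of the responsivity noted in Section \ref{sec:image}, $\tilde{\mathbf{w}}$ is \emph{the} responsivity of \ref{eqn:reparam}. The one nonroutine ingredient is the change-of-variables formula for \emph{merely} absolutely continuous (rather than $C^1$) homeomorphisms; this is exactly where absolute continuity of $\varphi$ is essential, since for a singular monotone homeomorphism the identity fails. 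The remaining points — orientation of $\varphi$, measurability and integrability of the composed integrand, and linearity over the $m$ coordinates — are bookkeeping.
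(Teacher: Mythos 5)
Your proof is correct and follows essentially the same route as the paper: both apply the change-of-variables formula for absolutely continuous homeomorphisms to the integrand $f(\varphi^{-1}(\lambda))\,\mathbf{w}(\lambda)$, yours just coordinate-wise with some added remarks on orientation and boundedness. Nothing further is needed.
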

Note that because $\varphi$ is AC, $\varphi'(\omega)$ exists a.e. and is integrable.
\begin{proof}
Let $\mathbf{v}{:}[a,b] \to \mathbb{R}^m$ be any integrable function.
By the change of variables formula, \cite{Natanson} vol II page 236
\[
\int_a^b \mathbf{v}(\lambda) \, d\lambda ~=~ \int_0^1 \mathbf{v}( \varphi(\omega) ) \varphi'(\omega) \, d\omega
\]
For $f \in L^1[0,1]$ substitute $\mathbf{v}(\lambda) := f( \varphi^{-1} (\lambda) ) \mathbf{w}(\lambda)$ to get
\[
\int_a^b f( \varphi^{-1} (\lambda) ) \mathbf{w}(\lambda) \, d\lambda ~=~ \int_0^1 f(\omega) \mathbf{w}( \varphi(\omega) ) \varphi'(\omega) \, d\omega
\]
The left side is the value of $f$ under operator \ref{eqn:reparam},
and the right side shows the claimed responsivity function.
\end{proof}

Write $\mathbf{w}(\lambda) = ( w_1(\lambda), \ldots,  w_m(\lambda) )$.
Define $\widehat{w}_i(\omega) := w_i(\varphi(\omega)) \varphi'(\omega), ~ i=1,\ldots,m$  and call the  
$\widehat{w}_i$ the \emph{reparameterized responsivities}, with the function $\varphi(\omega)$ understood by context.
In the next section on applications, we want to find a $\varphi(\omega)$ so that
a linear combination of the reparameterized responsivities $\widehat{w}_1,\ldots,\widehat{w}_m$
is a positive constant.
The next proposition give a sufficient condition for this,
and a formula for $\varphi(\omega)$.
\begin{prop}\label{prop:reparam}
For $\mathbf{w} : [a,b] \to \mathbb{R}^m$, suppose there is a positive linear combination
\[
\alpha_1 w_1(\lambda) + \ldots +  \alpha_m w_m(\lambda) > 0    \hspace{25pt} \text{almost all } \lambda \in [a,b]
\]
Then there is an AC homemomorphism $\varphi(\omega)$ and a $C > 0$ so that
\[
\left[ \alpha_1 w_1(\varphi(\omega)) + \ldots +  \alpha_m w_m(\varphi(\omega)) \right] \varphi'(\omega) = 
\alpha_1 \widehat{w}_1(\omega) + \ldots + \alpha_m \widehat{w}_m(\omega) = C           \hspace{10pt} \text{almost all } \omega
\]
\end{prop}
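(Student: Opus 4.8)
The plan is to take $\varphi$ to be a rescaling of the inverse of the cumulative integral of the given positive combination. Set $g(\lambda) := \alpha_1 w_1(\lambda) + \cdots + \alpha_m w_m(\lambda)$, so $g > 0$ a.e.\ on $[a,b]$ and, since each $w_i \in L^\infty[a,b] \subseteq L^1[a,b]$, $g$ is integrable. Define
\[
G(\lambda) := \int_a^\lambda g(t)\,dt, \qquad C := G(b) = \int_a^b g(t)\,dt > 0 .
\]
Then $G$ is absolutely continuous, and $G(\lambda_2) - G(\lambda_1) = \int_{\lambda_1}^{\lambda_2} g > 0$ for $\lambda_1 < \lambda_2$, so $G$ is a strictly increasing continuous bijection of $[a,b]$ onto $[0,C]$. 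Put $\varphi(\omega) := G^{-1}(C\omega)$ for $\omega \in [0,1]$; this is a homeomorphism of $[0,1]$ onto $[a,b]$, and $C$ is the positive constant in the statement.

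The only delicate point is that $\varphi$ --- equivalently $G^{-1}$ --- is absolutely continuous. For this I would use the criterion that a continuous monotone function is AC exactly when the integral of its (a.e.\ defined) derivative recovers its total increment. Let $E \subseteq [a,b]$ be the full-measure set on which $G$ is differentiable with $G' = g > 0$; at $u = G(\lambda)$ with $\lambda \in E$ the elementary inverse-function rule gives that $G^{-1}$ is differentiable at $u$ with $(G^{-1})'(u) = 1/g(\lambda)$. Since $G$ is AC it satisfies Luzin's condition (N), so $G([a,b] \setminus E)$ is a null set and $[0,C]$ equals $G(E)$ up to a null set; hence, using the change-of-variables formula for absolutely continuous monotone maps (\cite{Natanson}, exactly as invoked earlier in this section),
\[
\int_0^C (G^{-1})'(u)\,du = \int_{G(E)} (G^{-1})'(u)\,du = \int_E \frac{1}{g(\lambda)}\, g(\lambda)\,d\lambda = b-a = G^{-1}(C) - G^{-1}(0) .
\]
Therefore $G^{-1}$ is absolutely continuous on $[0,C]$, and composing it with the Lipschitz map $\omega \mapsto C\omega$ keeps $\varphi$ absolutely continuous.

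With this in hand the computation is routine. From the pointwise identity above, $(G^{-1})'(u) = 1/g(G^{-1}(u))$ for a.e.\ $u \in [0,C]$, so for a.e.\ $\omega$
\[
\varphi'(\omega) = C\,(G^{-1})'(C\omega) = \frac{C}{g(G^{-1}(C\omega))} = \frac{C}{g(\varphi(\omega))},
\]
where $g \circ \varphi$ is defined a.e.\ because $\varphi^{-1} = G/C$ is AC and hence carries null sets to null sets. Multiplying through,
\[
\bigl[\alpha_1 w_1(\varphi(\omega)) + \cdots + \alpha_m w_m(\varphi(\omega))\bigr]\,\varphi'(\omega) = g(\varphi(\omega))\,\varphi'(\omega) = C \qquad \text{for a.e. } \omega,
\]
and by the definition $\widehat{w}_i(\omega) = w_i(\varphi(\omega))\varphi'(\omega)$ the left-hand side equals $\alpha_1 \widehat{w}_1(\omega) + \cdots + \alpha_m \widehat{w}_m(\omega)$, which is the assertion. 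The main obstacle is thus precisely the absolute continuity of $G^{-1}$; everything else is the fundamental theorem of calculus together with the change-of-variables identity already used in this section.
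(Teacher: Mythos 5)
Your construction is exactly the paper's: $\varphi$ is the inverse of $\theta(\lambda) = C^{-1}\int_a^\lambda\bigl(\alpha_1 w_1 + \cdots + \alpha_m w_m\bigr)\,d\gamma$, followed by differentiating the identity $C\omega = \int_a^{\varphi(\omega)} S(\gamma)\,d\gamma$, so the proposal is correct and essentially the same proof. The only difference is that where the paper cites Natanson (vol.~I, exercise IX.13) for the absolute continuity of the inverse, you verify it directly via the integral criterion for monotone functions together with Luzin's condition (N) and the change-of-variables formula, which is a sound (and self-contained) justification of that cited step.
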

\begin{proof} Define
\begin{equation*}
S(\lambda) := \alpha_1 w_1(\lambda) + \ldots +  \alpha_m w_m(\lambda) \hspace{10pt}  \hspace{10pt}   C := \int_a^b S(\gamma) \, d\gamma \\
\hspace{15pt}   \theta(\lambda) := C^{-1} \int_a^\lambda S(\gamma) \, d\gamma
\end{equation*}
Since $S(\lambda){>}0$ a.e., $\theta(\lambda):[a,b] \to [0,1]$ is strictly increasing and $\theta'(\lambda){>}0$ a.e.
Define $\varphi := \theta^{-1}:[0,1] \to [a,b]$; then $\varphi$ is AC by \cite{Natanson}, vol I exercise IX.13.
Substitute $\lambda := \varphi(\omega)$ and multiply by $C$
\[
C \omega = \int_a^{\varphi(\omega)} S(\gamma)   \, d\gamma
\]
and differentiate w.r.t. $\omega$
\begin{equation}\label{eqn:varphiprime}
C =  S(\varphi(\omega))   \varphi'(\omega)  =  \alpha_1 \widehat{w}_1(\omega) + \ldots + \alpha_m \widehat{w}_m(\omega) 
\end{equation}
and we are done.
\end{proof}
In case $\varphi(\omega)$ comes from the above procedure,
we call the $\widehat{w}_i$ the \emph{equalized responsivities}.

\medskip
\emph{Remark 1}. 
Suppose that $\mathbf{w}(\lambda)$ is a step function on $[a,b]$.
An examination of the recipe for $\varphi^{-1}(\lambda)$ in the previous proof shows that 
it has constant slope on the subintervals of $[a,b]$ defining $\mathbf{w}$.
Alternatively, it is piecewise-linear with corners at the jumps of $\mathbf{w}$.
It follows that all the equalized responsivities $w_i(\varphi(\omega)) \varphi'(\omega)$ 
are step functions on the corresponding subintervals of $[0,1]$.

\emph{Remark 2}. 
The proposition has a geometric interpretation:
if the image of the curve $\mathbf{w} : [a,b] \to \mathbb{R}^m$
lies in in an open halfspace $\mathring{H}$ with $0 \in \partial\mathring{H}$,
then there is a $\varphi(\omega)$ so that the curve
$\widehat{\mathbf{w}} : [0,1] \to \mathbb{R}^m$ is in a hyperplane inside $\mathring{H}$ and parallel to $\partial\mathring{H}$.

\medskip
Setting all the $\alpha$'s to 1 in Proposition \ref{prop:reparam} gives
\begin{corollary}
For $\mathbf{w} : [a,b] \to \mathbb{R}^m$, suppose that
\[
w_1(\lambda) + \ldots +  w_m(\lambda) > 0    \hspace{25pt} \text{all } \lambda \in [a,b]
\]
Then there is an AC homemomorphism $\varphi(\omega)$ and a $C > 0$ so that
\[
\widehat{w}_1(\omega) + \ldots +  \widehat{w}_m(\omega)  = C    \hspace{25pt} \text{all } \omega \in [0,1]
\]
\end{corollary}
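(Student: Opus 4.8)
The plan is to obtain this as an immediate specialization of Proposition \ref{prop:reparam}. That proposition requires a positive linear combination $\alpha_1 w_1(\lambda) + \cdots + \alpha_m w_m(\lambda) > 0$ for almost all $\lambda \in [a,b]$; the hypothesis of the Corollary supplies exactly this (indeed for \emph{all} $\lambda$, which is only stronger) upon taking $\alpha_1 = \alpha_2 = \cdots = \alpha_m = 1$. So the first step is simply to invoke Proposition \ref{prop:reparam} with that choice of coefficients.

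The proposition then produces an AC homeomorphism $\varphi : [0,1] \to [a,b]$ and a constant $C > 0$ with
\[
\alpha_1 \widehat{w}_1(\omega) + \cdots + \alpha_m \widehat{w}_m(\omega) = C
\]
for almost all $\omega$, where $\widehat{w}_i(\omega) := w_i(\varphi(\omega))\varphi'(\omega)$; substituting $\alpha_1 = \cdots = \alpha_m = 1$ gives precisely $\widehat{w}_1(\omega) + \cdots + \widehat{w}_m(\omega) = C$, which is the asserted conclusion. Concretely, the $\varphi$ in question is $\theta^{-1}$, where $\theta(\lambda) = C^{-1}\int_a^\lambda S(\gamma)\,d\gamma$ and $S(\lambda) = w_1(\lambda) + \cdots + w_m(\lambda)$, with $C = \int_a^b S(\gamma)\,d\gamma$; one may record this explicitly for the reader's convenience but it is already contained in the cited proof.

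There is essentially no obstacle here: all the work — the construction of $\varphi$, the verification that it is an AC homeomorphism (via \cite{Natanson}, vol.\ I, exercise IX.13), and the differentiation step yielding \ref{eqn:varphiprime} — has been carried out in Proposition \ref{prop:reparam}. The only point worth a sentence is that the Corollary's hypothesis ``$> 0$ for all $\lambda$'' is genuinely stronger than the proposition's ``$> 0$ a.e.'', so no weakening is needed and the proposition applies verbatim; if one wishes the final equality to hold at \emph{every} $\omega \in [0,1]$ rather than merely a.e., one can appeal to the step-function case (\emph{Remark 1}) or to continuity of the responsivities, but this refinement is not required for the applications.
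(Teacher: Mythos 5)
Your proposal is correct and is exactly the paper's argument: the corollary is stated there as an immediate specialization of Proposition \ref{prop:reparam} obtained by setting all the $\alpha_i$ equal to $1$. Your extra remark about upgrading the a.e.\ equality to "all $\omega$" is a reasonable bit of care the paper itself glosses over, but it does not change the route.
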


For computing the centroid with equalized responsivities, 
there is a computational shortcut that works entirely with $\lambda$
and avoids explict calculation of $\varphi(\omega)$.
Define $S(\lambda) := \alpha_1 w_1(\lambda) + \ldots +  \alpha_m w_m(\lambda) > 0$
and $\widetilde{w}_i(\lambda) := w_i(\lambda) / S(\lambda)$.
We call $\widetilde{w}_i(\lambda)$ the \emph{normalized responsivities}.
From \ref{eqn:varphiprime} it follows that $\widehat{w}_i(\omega) := w_i(\varphi(\omega)) \varphi'(\omega) =  C \widetilde{w}_i(\varphi(\omega))$.
For $y \in \operatorname{int}( \Lambda_\mathbf{w}(Q^\infty) )$ the saddlepoint equations in $\omega$ are
\begin{equation}\label{eqn:saddleomega}
\int_0^1 \sigma\left( \tau_1 \widehat{w}_1(\omega) + \ldots + \tau_m \widehat{w}_m(\omega) \right) \widehat{w}_i(\omega) \, d\omega ~=~ y_i  \hspace{20pt} i=1,\ldots,m
\end{equation}
where $\tau_1, \ldots ,\tau_m$ are the unknowns, and $\sigma(t)$ is the ``squashing function".
After finding the vector root $\tau_0 := (\tau_1, \ldots, \tau_m)$ of this system,
the spectral estimate (in $\omega$) is $\sigma()$ of this linear combination.
Inside $\sigma()$ replace equalized by normalized responsivities to get
\[
\int_0^1 \sigma\left( C \tau_1 \widetilde{w}_1(\varphi(\omega)) + \ldots + C \tau_m \widetilde{w}_m(\varphi(\omega)) \right) \widehat{w}_i(\omega) \, d\omega ~=~ y_i  \hspace{20pt} i=1,\ldots,m
\]
Dropping the $C$ from this system just scales the root $\tau_0$ and does not make any difference to the final estimate.
So drop $C$ and replace $\widehat{w}_i(\omega)$ by its definition.
\[
\int_0^1 \sigma\left(  \tau_1 \widetilde{w}_1(\varphi(\omega)) + \ldots +  \tau_m \widetilde{w}_m(\varphi(\omega)) \right) w_i(\varphi(\omega)) \varphi'(\omega) \, d\omega ~=~ y_i  \hspace{20pt} i=1,\ldots,m
\]
Apply the change of variable formula
\begin{equation}\label{eqn:shortcutsystem}
\int_a^b \sigma\left(  \tau_1 \widetilde{w}_1(\lambda) + \ldots +  \tau_m \widetilde{w}_m(\lambda) \right) w_i(\lambda)   \, d\lambda ~=~ y_i  \hspace{20pt} i=1,\ldots,m
\end{equation}
This is the system we use for numerical work in the next section, because it avoids explict calculation of $\varphi(\omega)$.
The system has a unique solution because system \ref{eqn:saddleomega}  has a unique solution by Theorem \ref{thm:diffeo}.
The spectral estimate is a linear combination of the normalized responsivities, and then squashed by $\sigma$.

\section{Numerical Results in Colorimetry}\label{sec:numerical}

In this section we apply the main result to spectral reflectance estimation in colorimetry.
A function $f \in Q^\infty$ corresponds to the spectral reflectance (or transmittance) function of a material.
For a reflectance to be physically feasible, it must be between 0 and 1.
The operator $\Lambda$ corresponds to a light source and biological eye (or an electronic camera)
that responds linearly to light reflected from (or transmitted through) the material.
The vector $\Lambda(f)$ is the 3-channel response to $f$.
If $\Lambda(f_1) = \Lambda(f_2)$ then $f_1$ and $f_2$ are called \emph{metameric} for that light source and eye.
The set $\body := \Lambda(Q^\infty)$ is called the \emph{object-color solid} or \emph{Farbk{\"o}rper} for $\Lambda$, 
see \cite{Wyszecki1982} and Logvinenko \cite{Logvinenko2009}.
The standard involution of $\body$ corresponds to taking \emph{complementary colors}, see \cite{Logvinenko2009}.
The unique fixed point of the involution is 50\% neutral gray.
Points on $\partial \body$ correspond to \emph{optimal colors} 
or \emph{Optimalfarben}, see Schr{\"o}dinger \cite{ANDP:ANDP19203671504} p. 616.
The set $\body$ is a zonoid, and since we view the responsivities as step functions, it is a \emph{zonohedron},
i.e. a zonoid that is also a polyhedron.
See Centore \cite{Centore2013} for details, especially on transitions and calculations.
We allow the responsivities and the response to be negative;
this is not the case for biological eyes, but is useful for ``idealized cameras"
such as the \emph{taking characteristics} of the BT.709 RGB primaries, see \cite{poynton} p. 298.
But we \emph{do} assume that some linear combination of the responsivities is positive, 
as in Proposition \ref{prop:reparam}.

From now on we specialize to the classical case of the human eye and the 1931 standard observer.
For an illuminant $I(\lambda)$ and a material with reflectance $r(\lambda)$ the response is
the 3-vector of \emph{tristimulus values XYZ} given by
\begin{equation*}
XYZ := \int_a^b I(\lambda) r(\lambda) ~(~ \bar{x}(\lambda),\bar{y}(\lambda),\bar{z}(\lambda) ~)~ \, d\lambda
\end{equation*}
where $\bar{x}(\lambda)$, $\bar{y}(\lambda)$, and $\bar{z}(\lambda)$ are the CIE 1931 standard observer responsivities.
Given a response vector $XYZ$, the set $Q^\infty \cap \Lambda_\mathbf{w}^{-1}(XYZ)$ is the set of \emph{metamers} for $XYZ$.
The set is often called the \emph{metameric suite} for $XYZ$ with illuminant $I(\lambda)$, see \cite{cohen2001} and \cite{koenderink}.

For simplicity we use the equal energy Illuminant E so  $I(\lambda) \equiv 1$ in this section.
The function defining $\Lambda_\mathbf{w} : L^1[a,b] \to \mathbb{R}^3$ is
$\mathbf{w}(\lambda) :=  (\bar{x}(\lambda),\bar{y}(\lambda),\bar{z}(\lambda))$,
see Figure \ref{fig:reparam} \emph{(a)}.
In this section the wavelength interval $[a,b]$ is taken to be $[400,700]$ nm, 
although $\mathbf{w}$ is tabulated on a larger interval.
The calculation of the centroid of the metameric suite (which we think of as an ``average") is viewed
as a solution to the problem of \emph{spectral estimation} of the material reflectance $r(\lambda)$, 
given its response $XYZ$.
This problem is a part of ``inverse colorimetry" in Koenderink \cite{koenderink}, sections 13.1.3 and 13.4.4.
Many other methods have been studied.
Hawkyard \cite{Hawkyard1993} uses a special weighted average of the CIE color matching functions.
Murakami et. al. \cite{Murakami:02} use a Gaussian mixture of Wiener estimates.
DiCarlo and Wandell \cite{DiCarlo:03} use submanifold estimation methods, which are inherently non-linear.
Heikkinen et. al. \cite{Heikkinen:07, Heikkinen:08, Heikkinen:13} use regularization and PCA for dimensionality reduction
in a ``kernel machine" framework, and have used a non-linear transformation to enforce feasible reflectance values.
Bianco \cite{bianco2010} uses ICA for dimensionality reduction and minimizes a customized objective function with four terms.
Jakob and Hanika \cite{Jakob2019} use quadratic polynomials followed by an algebraic squashing function.
Burns \cite{Burns2015} uses a least-squares minimization with constraints and with squashing function $(\tanh(t) + 1)/2$ (the LHTSS method),
which is solved using Lagrange multipliers.

Our ``centroid method" is closest in spirit to \cite{Hawkyard1993} and \cite{Jakob2019};
in fact it combines elements of both.
Both the centroid and the Hawyard methods use a linear combination of 3 responsivity functions and solve a system of 3 equations in 3 unknowns,
and neither uses dimensionality reduction or a training set.
One could say that the centroid method is the Hawkyard method with the addition of a squashing function;
a more colorful description might be that it is ``Hawkyard with a squash".
The centroid method has these features:
\begin{enumerate}
\item no training set is required
\item no special numerical implementation, a generic $m$-dimensional root finder works
\item the spectral estimate has the exact desired response $y_0$ (up to numerical precision)
\item the spectral estimate works even for responses near the optimal color boundary (up to numerical stability)
\item the calculated reflectance values are always feasible, i.e. inside [0,1]; this property is intrinsic to the method
\end{enumerate}
On the other hand, irregularities in the spectrum of the light source will appear in the spectral estimate,
so it may not be well suited for fluorescent light sources.

For (2), our software implementation uses the root finder in \textbf{rootSolve} \cite{rootSolve},
which uses Newton-Raphson iteration.
As mentioned in the Remark after \ref{lemma:PD}, 
one could also use an $m$-dimensional function minimizer, or even a combination of the two.

\begin{figure}[!ht]
    \centering
    \subfloat{\includegraphics[width=1.0\linewidth]{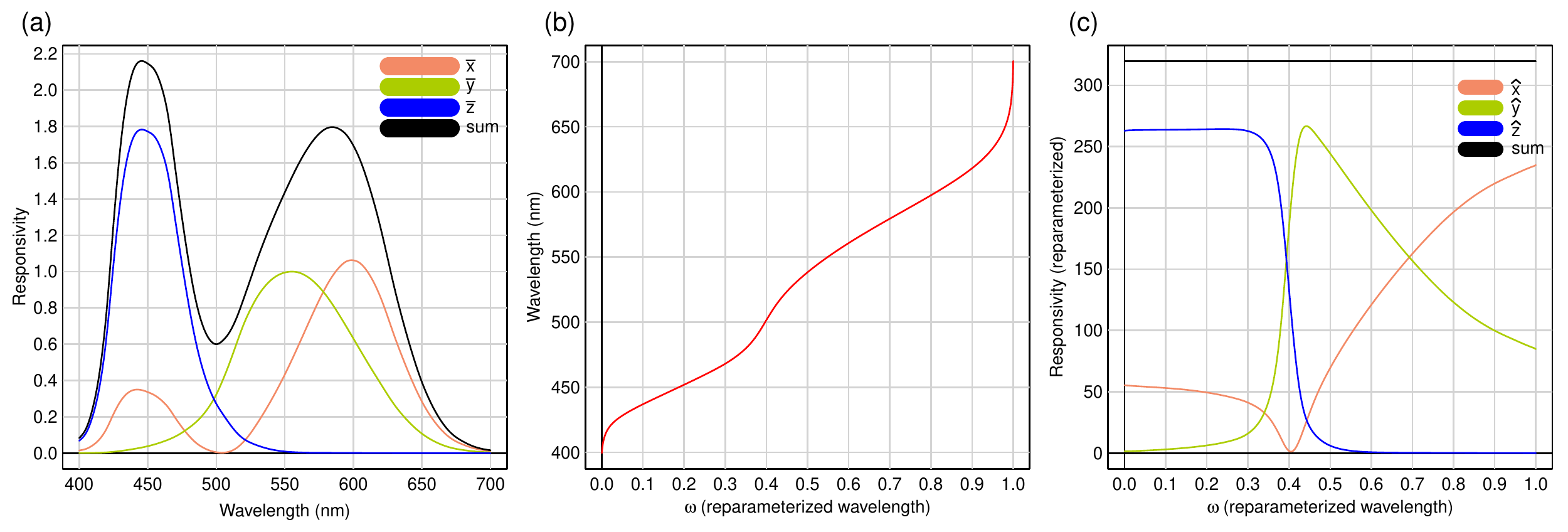}}    
    \caption{\emph{(a)} The CIE 1931 standard observer responsivities ($w_1{=}\bar{x},w_2{=}\bar{y},w_3{=}\bar{z}$) vs. wavelength, along with their sum.
             \emph{(b)} The reparameterization $\lambda{=}\varphi(\omega)$ from $\omega \in [0,1]  \text{ to } \lambda \in [400,700]$.
             The reparameterization is chosen so the sum of the three new responsivities is constant.
             \emph{(c)} The three new \emph{equalized} responsivities, along with their sum ($\sim$319.8).
             }
    \label{fig:reparam}
\end{figure}

If an object has a constant reflectance (a perfectly neutral spectral gray)
and the estimated reflectance is equal to the true reflectance (up to numerical tolerance),
we say that the estimator is \emph{neutral-exact}.
This is a desirable property for an estimator.
A fancy way to say this: 
the right-inverse $\gamma$ from equation \ref{eqn:inverse}
maps the line segment of neutral grays (the diagonal of the zonoid $\body$)
to the line segment of constant spectra (the diagonal of the cube $Q^\infty$).
The centroid method is neutral-exact iff  there is a linear combination of the responsivities
$w_1$, $w_2$, and $w_3$ that is a non-zero constant $C$.
For $\bar{x}$, $\bar{y}$, and $\bar{z}$ this far from true,
and so, following Logvinenko \cite{Logvinenko2009}, we reparametrize the interval [400,700]
with a function $\varphi : [0,1] \to [400,700]$
so that the sum of the new responsivities is constant, see Figures \ref{fig:reparam} \emph{(b)} and \emph{(c)}.
This new reparameterized $\Lambda$ in $(c)$ is now neutral-exact,
and we say the the responsivities are \emph{equalized}.

\begin{figure}[!ht]
    \centering
    \subfloat{\includegraphics[width=1.0\linewidth]{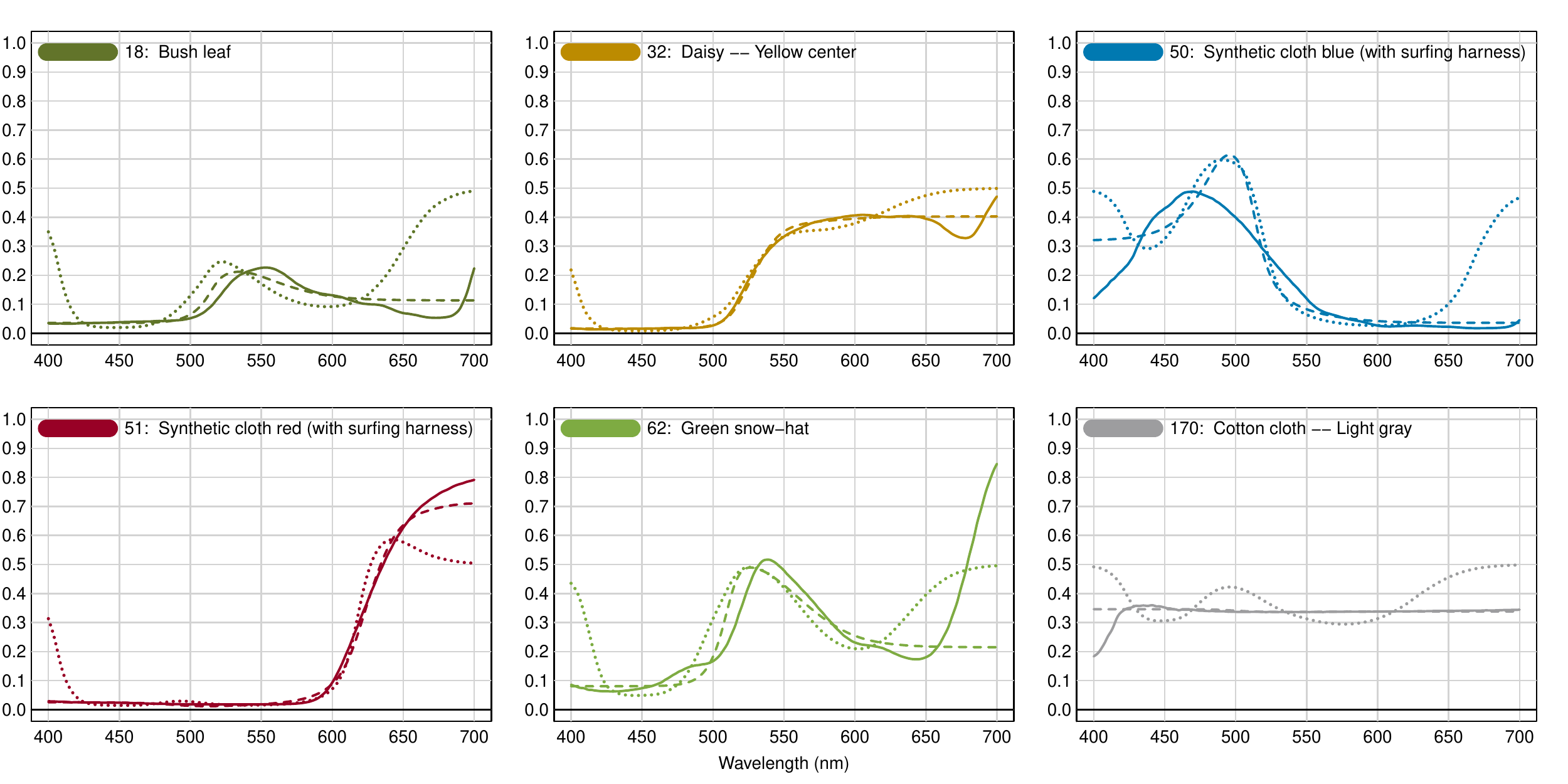}}    
    \caption{Six spectra from the NCSU dataset of 170 spectra.  The vertical axis is reflectance.
             In each plot, the original spectrum is drawn with solid linestyle.
             The estimated spectrum using the centroid method, and with the original responsivities in Figure \ref{fig:reparam} \emph{(a)}, is drawn with dotted linestyle.
             The estimated spectrum using the same method, but with the equalized responsivities in \ref{fig:reparam} \emph{(c)}, is drawn with dashed linestyle.
             Compare with Koenderink \cite{koenderink} Figure 13.8, p. 555.
             In each plot, all 3 spectra are metameric under Illuminant E.
             }
    \label{fig:koenderink6}
\end{figure}

For testing an estimator, it is standard practice to take the reflectance spectrum of a material,
compute its $XYZ$, estimate the spectrum from $XYZ$, and then compare the true and estimated spectra.
For example, Figure \ref{fig:koenderink6} shows the true and estimated spectra for 6 spectra
from the NCSU dataset of 170 reflectance spectra.
Each figure shows both the unequalized and the equalized estimates.
Note that true spectrum of sample ``170: Cotton cloth - light gray" is quite flat, except below 425nm.
The estimate from the unequalized $\Lambda$ has undulations,
which are artifacts from the peaks and valleys in  $\bar{x}$, $\bar{y}$, and $\bar{z}$.
The ``neutral-exact" estimate is much flatter and more accurate.
In equation \ref{eqn:repapp} we saw that the estimate with equalized responsivities could be interpreted
as a centroid with a non-uniform directed filtration.
In these 6 examples, the original and equalized estimates are different,
so this is an example of two different filtrations (one uniform and one non-uniform)
of $L^1[400,700]$ that yield different centroids of the same set.

We turn now to the Hawkyard method \cite{Hawkyard1993}.
Once again we use illuminant E for simplicity.
As presented in \cite{bianco2010}, the responsivities are normalized like this (see Figure \ref{fig:hawkyard2} \emph{(b)}):
\begin{equation}
\widetilde{x} := \bar{x} / (\bar{x} + \bar{y} + \bar{z}) \hspace{30pt} \widetilde{y} := \bar{y} / (\bar{x} + \bar{y} + \bar{z})\hspace{30pt} \widetilde{z} := \bar{z} / (\bar{x} + \bar{y} + \bar{z})
\end{equation}
and we then seek a reflectance $r$ that is a linear combination
$r = \alpha_x \widetilde{x} + \alpha_y \widetilde{y} + \alpha_z \widetilde{z}$ for three unknown $\alpha$'s.
Assume there are 301 wavelengths, from 400 to 700 with a 1nm step.
In matrix form we seek a 301-vector $\mathbf{r} = \widetilde{W} \mathbf{\alpha}$
where $\widetilde{W}$ is a 301$\times$3 matrix whose columns are $\widetilde{x}$, $\widetilde{y}$, and $\widetilde{z}$,
and where $\mathbf{\alpha}$ is the unknown 3-vector.
Let $\overline{W}$ be the 301$\times$3 matrix whose columns are $\bar{x}$, $\bar{y}$, and $\bar{z}$.
For given 3-vector $XYZ$, the equation  to be satisfied is
\[
\overline{W}^T r = XYZ  \hspace{10pt}\implies\hspace{10pt} \overline{W}^T ( \widetilde{W} \mathbf{\alpha} ) = XYZ   \hspace{10pt}\implies\hspace{10pt}   \left( \overline{W}^T  \widetilde{W} \right) \mathbf{\alpha}  = XYZ
\]
and this is an trivial $3{\times}3$ matrix equation.
This normalized Hawkyard method has the neutral-exact property as well.
Note the similarity between the normalized responsivities in Figure \ref{fig:hawkyard2} \emph{(b)},
and the equalized responsivities in Figure \ref{fig:reparam} \emph{(c)}.
In fact, we saw in the previous section that they are the same, except for the reparameterization from $\omega$ to $\lambda$ and a scale factor.
The chief problem with the Hawkyard method is that it can generate reflectances outside [0,1].
In our calculations, we simply clamp the reflectance to [0,1], but this changes the resulting XYZ.
More sophisticated XYZ-preserving corrections have been implemented, see \cite{Zuffi2008}.
Because of the ``squashing function" the centroid method does not have this problem.

\begin{figure}[!ht]
    \centering
    \subfloat{\includegraphics[width=1.0\linewidth]{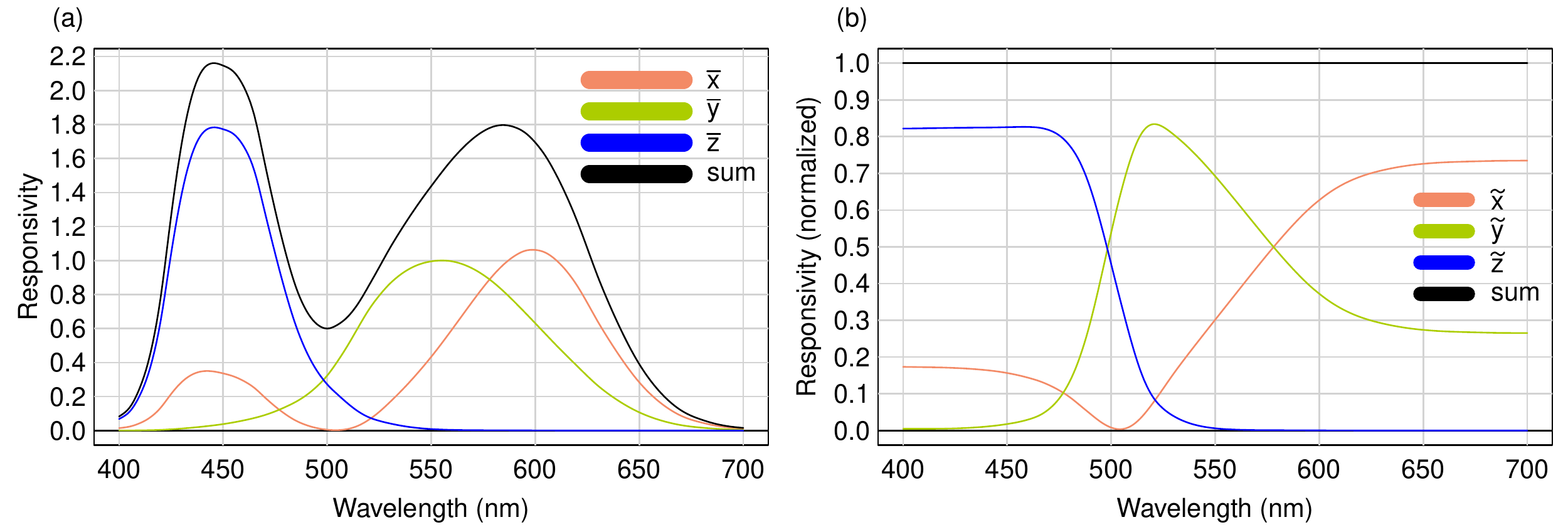}}
    \caption{\emph{(a)} The CIE 1931 standard observer responsivities ($w_1{=}\bar{x},w_2{=}\bar{y},w_3{=}\bar{z}$) vs. wavelength, along with their sum.
             \emph{(b)} The three normalized responsivities, along with their sum ($\equiv 1$), as used in the Hawkyard method.
             }
    \label{fig:hawkyard2}
\end{figure}

The two methods have been compared on the full NCSU dataset of 170 reflectance spectra.
For each spectrum the \emph{residual} spectrum is defined by
\[
residual := (estimated ~ spectrum) - (true ~ spectrum)
\]
and we split the residual into positive and negative parts by setting
\[
residual^+ := \max( residual, 0)  \hspace{30pt} \text{ and } \hspace{30pt} residual^- := \min( residual, 0)
\]
so that
\[
residual = residual^+  ~+~  residual^-  \hspace{20pt} \text{ and } \hspace{20pt}  \abs{residual} = residual^+  ~-~  residual^-
\]
The means of both $residual^+$ and $residual^-$ over all 170 objects,
and for both estimation methods,
are plotted in Figure \ref{fig:residuals} \emph{(a)}.
Note that for the interval from 650 to 700 nm, both estimates are mostly too low.
The `spread' between the means of $residual^+$ and $residual^-$  is the mean of $\abs{residual}$.
The difference between the accuracy of the two methods is not really significant,
but recall that some of the Hawkyard-estimated spectra do not have the exact desired XYZ
because of clamping.

\begin{figure}[!ht]
    \centering
    \subfloat{\includegraphics[width=1.0\linewidth]{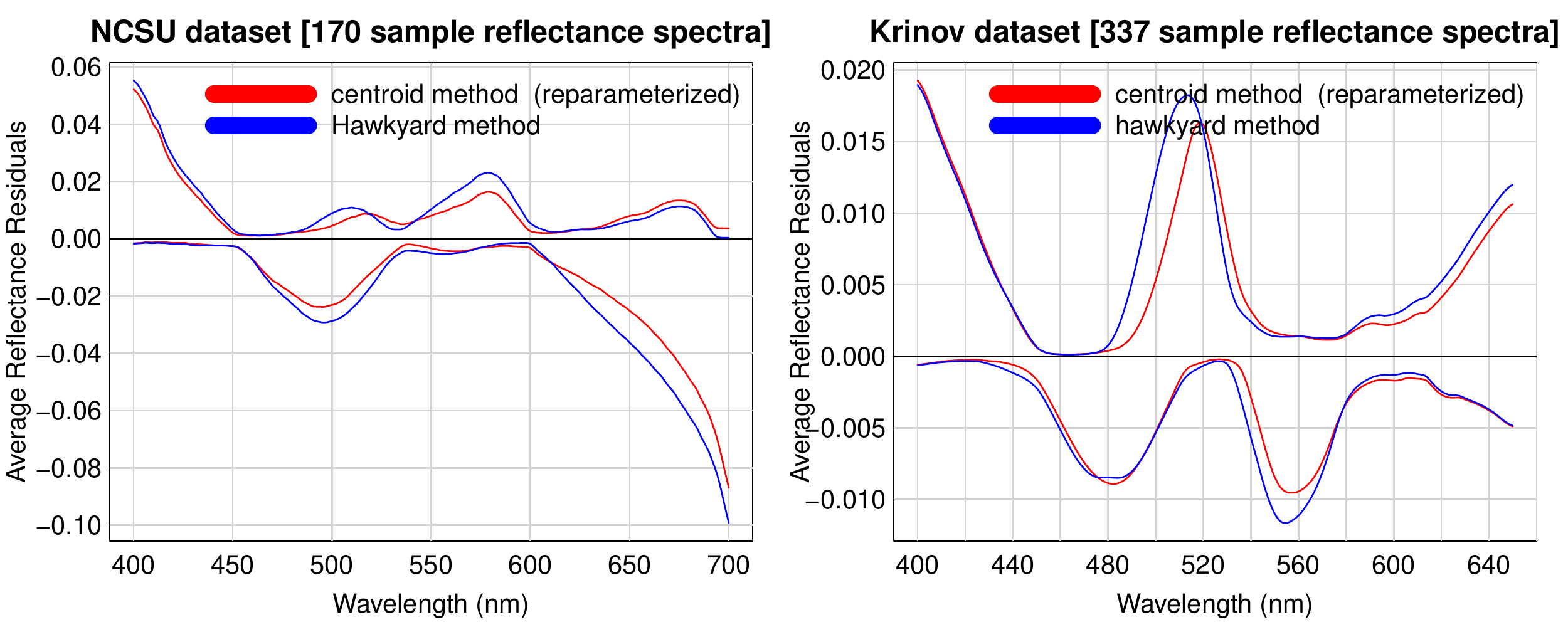}}    
    \caption{\emph{(left)} mean residuals for NCSU dataset of objects \cite{Vrhel}.
             The mean of $residual^+$ is above the x-axis, and the mean of $residual^-$ is below.
             \emph{(right)} mean residuals for the Krinov dataset of outdoor objects \cite{Krinov}.
             }
    \label{fig:residuals}
\end{figure}

The two methods are also compared on the Krinov dataset of 337 natural outdoor reflectance spectra.
These are plotted in Figure \ref{fig:residuals} \emph{(b)}.
Note that the scales in the two plots are not the same;
the residuals in the Krinov dataset are much smaller.
The reason is that the outdoor colors are less vivid than the NCSU dataset (which includes many man-made objects).
Once again the difference between the accuracy of the two methods is not really significant.
Compare these plots with Bianco \cite{bianco2010} Figure 6,
where the mean of $\abs{residual}$ is plotted for many methods.

One can create a different estimator by taking linear combinations of $\bar{x}$, $\bar{y}$, and $\bar{z}$,
to get a new basis for their span.
One popular change of basis is the Hunt-Pointer-Estevez transformation to responsivities $\bar{l}$, $\bar{m}$, and $\bar{s}$.
These are designed to approximate the responses of the long, medium, and short cones in the retina of the standard observer, 
see Hunt \cite{hunt2004}, page 598.
They are plotted in Figure \ref{fig:reparamHPE} \emph{(a)} along with the equalized
$\widehat{l}$, $\widehat{m}$, and $\widehat{s}$ in \emph{(c)}.
Compare this with Figure \ref{fig:reparam}.

\begin{figure}[!ht]
    \centering
    \subfloat{\includegraphics[width=1.0\linewidth]{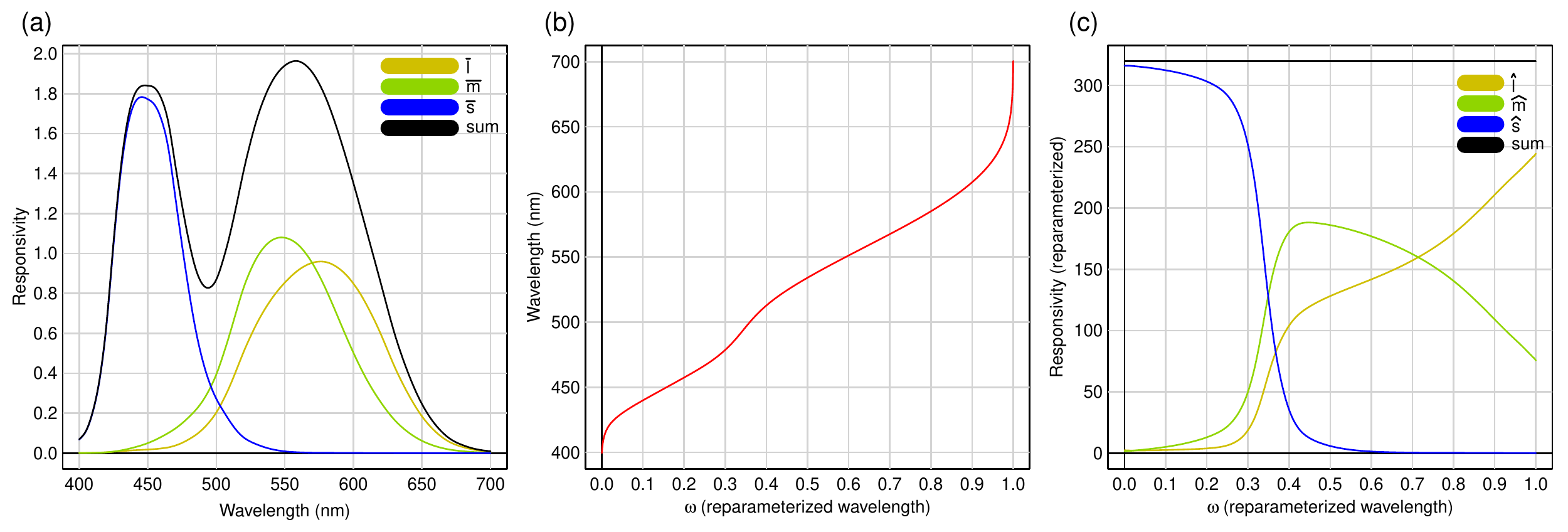}}    
    \caption{\emph{(a)} The  Hunt-Pointer-Estevez LMS responsivities ($w_1{=}\bar{l},w_2{=}\bar{m},w_3{=}\bar{s}$) vs. wavelength, along with their sum.
             \emph{(b)} The reparameterization $\lambda{=}\varphi(\omega)$ from $\omega \in [0,1]  \text{ to } \lambda \in [400,700]$.
             The reparameterization is chosen so the sum of the responsivities in \emph{(a)} is constant.
             \emph{(c)} The three \emph{equalized} responsivities $\widehat{l}$, $\widehat{m}$, and $\widehat{s}$, along with their sum.
             }
    \label{fig:reparamHPE}
\end{figure}

The responses XYZ and LMS are related by a $3{\times}3$ matrix.
The centroid method using XYZ and with LMS are compared in Figure \ref{fig:residualsHPE},
for both the NCSU and Krinov datasets.
Both estimators are neutral-exact.

\begin{figure}[!ht]
    \centering
    \subfloat{\includegraphics[width=1.0\linewidth]{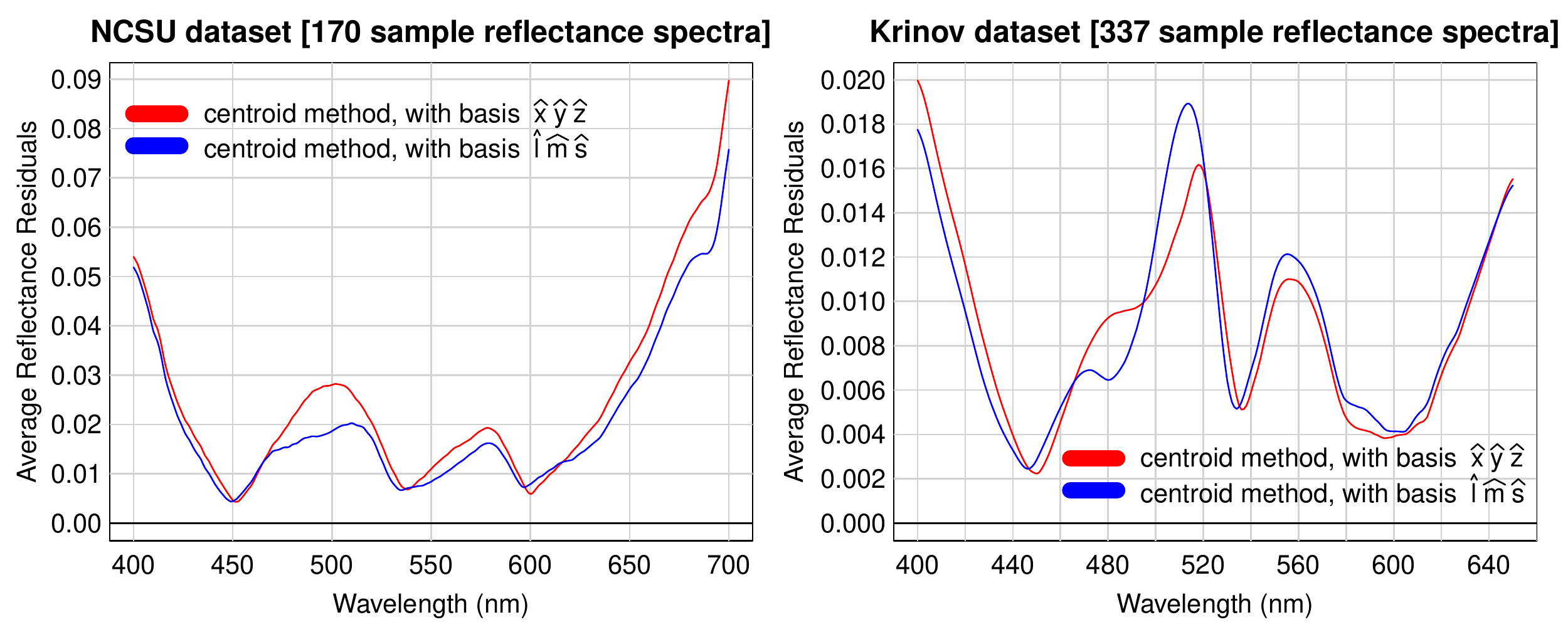}}    
    \caption{\emph{(left)} mean residuals for NCSU dataset of objects \cite{Vrhel}.
             These are the means of \abs{residual} for two estimators.
             \emph{(right)} means of \abs{residual} for the Krinov dataset of outdoor objects from \cite{Krinov}, and for the same 2 estimators.
             }
    \label{fig:residualsHPE}
\end{figure}

On the NCSU dataset the LMS estimator is slightly better than the XYZ, 
but the reverse is true for the Krinov dataset.
There is little evidence to choose one over the other.

\section{The Unbounded Case}\label{sec:unbounded}

In this section we briefly examine, without complete proofs, what happens when the cube $Q^\infty$
is replaced by the \emph{non-negative orthant} 
$L_+^1:= \{ f \in L^1[0,1] : f(x) \ge 0 ~a.e.\}$.
The inclusion and mapping situation is
\[
L_+^1 ~~ \subset L^1[0,1]  ~~  {\overset{\Lambda}\twoheadrightarrow}  ~~ \mathbb{R}^m
\]
For $y \in \mathbb{R}^m$ we are now interested in the centroid of $\Lambda^{-1}(y) \cap L_+^1$.
The first difficulty is that the set may be unbounded;
moreover $V_n \cap \Lambda^{-1}(y) \cap L_+^1$ may be unbounded for all $n$ and so $\centroid_\mathapprox{P}(\Lambda^{-1}(y) \cap L_+^1)$ is undefined.
So in this Section we place the following extra conditions on $\mathbf{w}(x) := (w_1(x), \ldots, w_m(x))$
\begin{equation}\label{eqn:conditions}
w_i(x) \ge 0 \text{ for all } i    \hspace{20pt}\text{and}\hspace{20pt}  \sum_{i=1}^m w_i(x) \ge \epsilon \hspace{20pt} \text{ for some }  \epsilon > 0
\end{equation}
As usual, these inequalities should be taken almost everywhere in $x$.
They imply that $\Lambda_{\mathbf{w}}^{-1}(y) \cap L_+^1$ is bounded.

Define $Y_\mathbf{w} := \Lambda_{\mathbf{w}}(L_+^1)$.
In the bounded case $\Lambda_{\mathbf{w}}(Q^\infty)$ was a convex body, 
but now $Y_\mathbf{w}$ is a convex cone contained in the 
\emph{non-negative orthant} $\mathbb{R}_+^m := \{ (t_1,\ldots,t_m) : all ~ t_i \ge 0 \}$.
Also define the \emph{non-positive orthant} $\mathbb{R}_-^m := \{ (t_1,\ldots,t_m) : all ~ t_i \le 0 \}$.

For the cube $Q^\infty$ we used $\rho := \mathbf{1}_{[0,1]}$,
but now we want to replace it by $\rho := \mathbf{1}_{[0,\infty)}$.
The Laplace Transform $P(s)$ is now
\[
P(s) := \mathcal{B}(\rho)(s) := \int_0^\infty e^{-ts}  \, dt ~=~ 1/s  \hspace{20pt} \text{for } \Re(s) < 0
\]
Note that $P(s)$ is only defined in the left half-plane.
We also get
\begin{align*}
K(t) &:= \log(P(-t)) = -\log(-t) \hspace{20pt} \sigma(t) := K'(t) = -1/t  \hspace{20pt}  \text{for } t<0 \\
K_r(s) &= -\log\abs{s   }\text{ for } \Re(s)<0
\end{align*}
Since $\rho$ does not have compact support,
the key results of Section \ref{sec:PK} -
Theorem \ref{thm:Ps}, Lemma \ref{lemma:Kr}, and Corollary \ref{cor:Kr} -
cannot be applied here.
However, because the form of these functions is so simple, 
the conclusions can be quickly checked directly,
with appropriate restrictions to the left half-plane.

We turn now to the diffeomorphism $G_\sigma$ in Theorem \ref{thm:diffeo}.
For $(t_1,\ldots,t_m) \in \mathring{\mathbb{R}}_-^m$ (all $t_i <0$), 
it follows from \ref{eqn:conditions} that $t_1 w_1(x) + \ldots + t_m w_m(x) < 0$.
We now define $G_\sigma : \mathring{\mathbb{R}}_-^m \to Y_\mathbf{w}$ by the same formula \ref{eqn:Gsigma}
\begin{equation*}
G_\sigma(t_1,\ldots,t_m) := \Lambda_{\mathbf{w}}( \sigma(t_1 w_1 +\ldots+ t_m w_m) )  \hspace{15pt} \text{where } \sigma(t) = -1/t
\end{equation*}
Since $\sigma'(t) > 0$, the derivative $\mathbf{D}G_\sigma$ is positive-definite as in \ref{lemma:PD},
and so $G_\sigma$ is injective as in the bounded case.
However, the argument for surjectivity breaks down because the support function $\psi_\body$ is unbounded,
and Lemma \ref{lemma:uniform} is false.
In fact, $G_\sigma$ is \emph{not} surjective in general;
see the example in Figure \ref{fig:unbounded1} part $(c)$.
This means that the saddlepoint equation may have no solution.

But if it \emph{does} have a solution, then the complex inversion has a saddlepoint
and the techniques for proving the Main Theorem go through as before.
The functions $K(s)$ and $K_r(s)$ have the required bounds.
By construction, all components of the saddlepoint 
are in the region of convergence of $P(s)$.
The unbounded version of the Main Theorem can be stated 
\begin{maintheorem} (\textbf{unbounded version})
Let $\mathbf{w} : [0,1] \to \mathbb{R}^m$ be a step function
with $w_1, \ldots, w_m$ linearly independent and satisfying conditions \ref{eqn:conditions}.
Let $\Lambda_\mathbf{w} : L^1[0,1] \twoheadrightarrow \mathbb{R}^m$ be the induced surjective linear operator.
If $y_0 \in \mathbb{R}_+^m$ and the saddlepoint equation
\begin{equation} 
\int_0^1 \sigma( \langle \tau, \mathbf{w}(x) \rangle ) \mathbf{w}(x) \,dx  =  y_0    \hspace{30pt} \tau \in \mathring{\mathbb{R}}_-^m  \hspace{30pt} \sigma(t) = -1/t
\end{equation}
has a solution $\tau_0 \in \mathring{\mathbb{R}}_-^m$,
then $\tau_0$ is unique and 
\begin{equation*} 
\centroid_{\mathapprox{P}} \left( L_+^1  \cap \Lambda_\mathbf{w}^{-1}(y_0) \right) ~~=~~ \sigma( \langle \tau_0, \mathbf{w} \rangle)
\end{equation*}
where $\mathapprox{P}$ is the  profile-gauge directed filtration of $L^1[0,1]$.
\end{maintheorem}

\medskip
For application in colorimetry we interpret a function  $f \in L^1_+[a,b]$ as a spectral power distribution of a \emph{light source}.
Its $L^1$-norm is the total power of the light source.
With the CIE 1931 standard observer responsivities  $w_1{=}\bar{x}$, $w_2{=}\bar{y}$, and $w_3{=}\bar{z}$ as in the previous section,
$\Lambda_\mathbf{w}(f)$ is the 3-vector of tristimulus values $XYZ$ of $f$.
Given an $XYZ_0 \in \mathbb{R}_+^3$, the above centroid is interpreted as a good estimate for a light source with that $XYZ_0$.

\begin{figure}[!ht]
    \centering
    \subfloat{\includegraphics[width=1.0\linewidth]{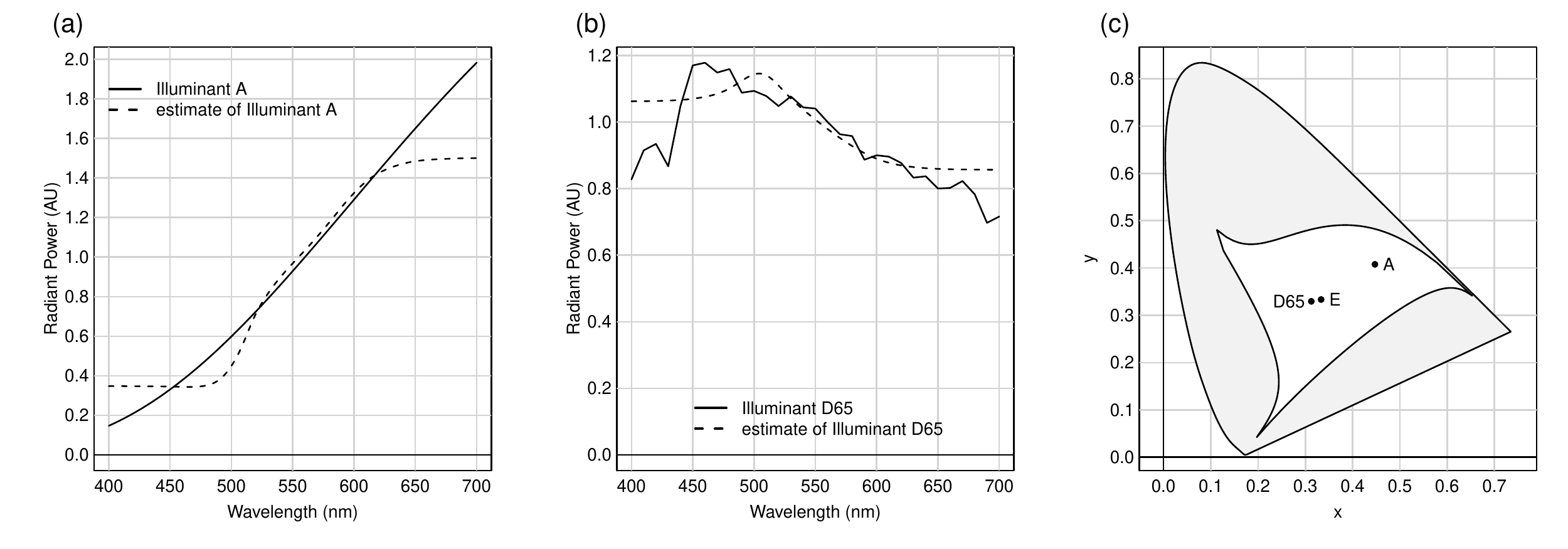}}    
    \caption{\emph{(a)} the spectrum of standard Illuminant A and an estimate for it, using the centroid method;
                        these two spectra are metameric.
             \emph{(b)} the same as \emph{(a)} but with Illuminant D65.
             \emph{(c)} see the text 
             }
    \label{fig:unbounded1}
\end{figure}

Figure \ref{fig:unbounded1} shows the estimates of two standard spectra from their $XYZ$ values.
The equalized responsivities $\widehat{x}$, $\widehat{y}$, and $\widehat{z}$ from Figure \ref{fig:reparam} are used here,
so the estimator is neutral-exact (i.e. the estimate of Illuminante E is exact).
These estimates tend to flatten at the ends.
In part $(c)$ the familiar inverted-U is the section of the convex cone $Y_{\mathbf{w}}  := \Lambda_{\mathbf{w}}(L_+^1)$ 
by the plane $X{+}Y{+}Z{=}1$;
it illustrates the chromaticities of all possible light sources.
The triskelion-shaped region is the section of the (non-convex) cone
$G_\sigma( \mathring{\mathbb{R}}_-^m )$ by the same plane.
Chromaticities outside this region cannot be spectrally estimated by the centroid method.

\section{Discussion}\label{sec:discussion}
To summarize, the centroid we have defined is relative to a specific directed filtration $\mathapprox{P}$,
and is a unique linear combination of the responsivities $w_1, \ldots, w_m$,
followed by a Langevin-function-based squashing function $\sigma(t)$.
We have proved this under the condition that the responsivities are step functions.
Some open questions are:

\begin{itemize}[label=\textbullet]
\item{
In the Main Theorem, can the requirement that 
$\mathbf{w} : [0,1] \to \mathbb{R}^m$ 
be a step function be removed?
From Corollary \ref{cor:diffeo2} we know that the saddlepoint equation has a solution $\tau_0$ for small perturbations of $\mathbf{w}$,
so the resulting function $\sigma( \langle \tau_0,\mathbf{w} \rangle)$ is the only possible candidate for the centroid.
But that does not mean that the net in Definition \ref{defn:centroid} converges to it.
A density argument might work to extend it to regulated functions, i.e. uniform limits of step functions, see (\cite{LangReal} p. 94).
Another approach might be to prove Corollary \ref{cor:LaplaceMethod}, about the \emph{ratio} of two Laplacian integrals,
with weaker conditions on $\phi_n(x)$ and $f(x)$.
In the almost trivial case where $m{=}1$ and the feasible region is the unbounded orthant from the previous section,
the function $P(s)=1/s$ is so simple that the answer is \emph{yes}.
}
\item{In the Main Theorem, can the profile-gauge directed filtration $\mathapprox{P}$
be replaced by a bigger one, perhaps even the collection of \emph{all} finite-dimensional subspaces of $L^1[0,1]$?
I suspect that for this collection $\mathapprox{F}$
the answer is \emph{no} - that the corresponding net does not converge in general.}
\end{itemize}

\appendix
\section{Proof of the Laplace Approximation} 
\label{sec:appA}

\medskip

\begin{lemma}\label{lemma:Wong1}
Let $\alpha := (\alpha_1,\ldots,\alpha_m)$ be a multi-index of non-negative integers,
and $\abs{\alpha} := \alpha_1 + \cdots + \alpha_m$.
Let $\mu := (\mu_1,\ldots,\mu_m)$ be a vector of positive numbers.
Let $B_{\delta}$ be the ball in $\mathbb{R}^m$ centered at $0$ with radius $\delta$.
Then
\begin{equation}\label{eqn:Wong1}
\int_{\mathbb{R}^m} \abs{x_1}^{\alpha_1} \cdots \abs{x_m}^{\alpha_m} \exp \left[ -\frac{n}{2} \sum_{j=1}^m \mu_j x_j^2 \right] \, dx ~=~ \mathcal{O} \left( n^{-(m+\abs{\alpha})/2} \right) ~~~(n \to \infty)
\end{equation}
\begin{equation}\label{eqn:Wong2}
\int_{\mathbb{R}^m \backslash B_{\delta}} \abs{x_1}^{\alpha_1} \cdots \abs{x_m}^{\alpha_m} \exp \left[ -\frac{n}{2} \sum_{j=1}^m \mu_j x_j^2 \right] \, dx ~=~ \mathcal{O}\left( e^{-\epsilon n} \right) ~~~~~(n \to \infty)
\end{equation}
and $\epsilon$ can be taken to be $\delta^2 \min(\mu)/2 > 0$, where $\min(\mu) := \min_j(\mu_j)$.
\end{lemma}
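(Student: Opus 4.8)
The plan is to prove the two displayed estimates separately; both are direct computations that exploit the product structure of the integrand.

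For the first estimate \eqref{eqn:Wong1}, I would factor the integral by Tonelli's theorem (the integrand is nonnegative) as $\prod_{j=1}^m \int_{\mathbb{R}} \abs{x_j}^{\alpha_j} \exp(-\tfrac{n}{2}\mu_j x_j^2)\,dx_j$. In the $j$-th one-dimensional factor the substitution $u = \sqrt{n\mu_j}\,x_j$ gives
\[
\int_{\mathbb{R}} \abs{x_j}^{\alpha_j} e^{-\frac{n}{2}\mu_j x_j^2}\,dx_j ~=~ (n\mu_j)^{-(\alpha_j+1)/2}\int_{\mathbb{R}} \abs{u}^{\alpha_j} e^{-u^2/2}\,du ~=~ (n\mu_j)^{-(\alpha_j+1)/2}\, 2^{(\alpha_j+1)/2}\,\Gamma\big(\tfrac{\alpha_j+1}{2}\big),
\]
a finite quantity of exact order $n^{-(\alpha_j+1)/2}$. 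Multiplying over $j=1,\dots,m$ and adding exponents yields $\mathcal{O}\big(n^{-\sum_j(\alpha_j+1)/2}\big) = \mathcal{O}\big(n^{-(m+\abs{\alpha})/2}\big)$, which is the claim.

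For the second estimate \eqref{eqn:Wong2}, write $\mu_{\min} := \min_j \mu_j > 0$. On $\mathbb{R}^m\setminus B_{\delta}$ one has $\sum_j \mu_j x_j^2 \ge \mu_{\min}\abs{x}^2$ and $\abs{x}^2 \ge \delta^2$, hence
\[
\frac{n}{2}\sum_{j=1}^m \mu_j x_j^2 ~\ge~ \frac{n}{2}\mu_{\min}\delta^2 ~+~ \frac{n}{2}\mu_{\min}\big(\abs{x}^2 - \delta^2\big),
\]
and the last term is nonnegative on the region of integration; since $n \ge 1$ it dominates $\tfrac12\mu_{\min}(\abs{x}^2-\delta^2)$. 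Therefore $e^{-\frac{n}{2}\sum_j \mu_j x_j^2} \le e^{-\frac{n}{2}\mu_{\min}\delta^2}\, e^{-\frac{1}{2}\mu_{\min}(\abs{x}^2-\delta^2)}$ on $\mathbb{R}^m\setminus B_\delta$, and integrating and enlarging the domain back to $\mathbb{R}^m$ gives
\[
\int_{\mathbb{R}^m\setminus B_{\delta}} \abs{x_1}^{\alpha_1}\cdots\abs{x_m}^{\alpha_m}\, e^{-\frac{n}{2}\sum_j \mu_j x_j^2}\,dx ~\le~ C\, e^{-\frac{n}{2}\mu_{\min}\delta^2},
\]
where $C := e^{\mu_{\min}\delta^2/2}\int_{\mathbb{R}^m} \abs{x_1}^{\alpha_1}\cdots\abs{x_m}^{\alpha_m}\, e^{-\frac{1}{2}\mu_{\min}\abs{x}^2}\,dx$ is finite (it factors into Gamma functions as above) and independent of $n$. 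This is $\mathcal{O}(e^{-\epsilon n})$ with $\epsilon = \delta^2\mu_{\min}/2$, as asserted.

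I do not expect a genuine obstacle — both parts are routine — but the one point requiring slight care is matching the sharp constant $\epsilon = \delta^2\min(\mu)/2$: one must peel off the factor $e^{-\frac{n}{2}\mu_{\min}\delta^2}$ in full before discarding the residual Gaussian decay $e^{-\frac{1}{2}\mu_{\min}(\abs{x}^2-\delta^2)}$, rather than splitting the exponent evenly (which would only give $\epsilon = \delta^2\min(\mu)/4$).
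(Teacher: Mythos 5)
Your proof is correct, and both estimates check out, including the claimed constant $\epsilon = \delta^2\min(\mu)/2$. For the first bound \eqref{eqn:Wong1} your argument is essentially the paper's: factor the integral using the product structure and evaluate each one-dimensional factor by a substitution yielding a Gamma function of exact order $n^{-(\alpha_j+1)/2}$ (the paper substitutes $y = n\mu x^2/2$, you rescale by $\sqrt{n\mu_j}$ — the same computation). For the tail bound \eqref{eqn:Wong2}, however, you take a genuinely different and in fact shorter route. The paper first proves the one-dimensional case by replacing $-x^2$ with $-2\delta x + \delta^2$ (valid since $(x-\delta)^2 \ge 0$), handling $\alpha = 0$ explicitly and $\alpha \ge 1$ by integration by parts and induction, and then reduces the $m$-dimensional case to the radial one via the bounds $\abs{x_j} \le r$, $\mu_j \ge \min(\mu)$ and spherical coordinates. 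You instead peel off the worst-case factor $e^{-n\mu_{\min}\delta^2/2}$ on the region $\abs{x} \ge \delta$ and, using $n \ge 1$, retain a single $n$-independent Gaussian factor $e^{-\frac{1}{2}\mu_{\min}(\abs{x}^2-\delta^2)}$ whose moment integral over all of $\mathbb{R}^m$ is a finite constant $C$; this dispenses with the induction on $\alpha$ and the spherical-coordinate reduction entirely, while producing the same $\epsilon$. What the paper's route buys is an explicit elementary evaluation in dimension one; what yours buys is uniformity and brevity in all dimensions at once. The only point worth making explicit in your write-up is the (easy) finiteness of $C$, which, as you note, again factors into one-dimensional Gamma-function integrals.
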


\begin{proof}
In \ref{eqn:Wong1}, for $m{=}1$ the integral is
\begin{equation*}
\int_{\mathbb{R}} \abs{x}^{\alpha} \exp \left[ -\frac{n}{2} \mu x^2 \right] \, dx ~=~ 2 \int_0^\infty x^{\alpha} \exp \left[ -\frac{n}{2} \mu x^2 \right] \, dx
\end{equation*}
The substitution $y = n \mu x^2 / 2$ quickly leads to
\begin{equation*}
\int_{\mathbb{R}} \abs{x}^{\alpha} \exp \left[ -\frac{n}{2} \mu x^2 \right] \, dx ~=~ \Gamma \left( \frac{\alpha+1}{2} \right) \left( \frac{\mu}{2} \right)^{-(\alpha+1)/2} n^{-(\alpha+1)/2}
\end{equation*}
This takes care of $m{=}1$.
For $m\ge2$, the integral splits into a product of $m$ integrals over $\mathbb{R}$,
the exponents of $n$ add up to $-(m+\abs{\alpha})/2$,
and this shows \ref{eqn:Wong1}.

In \ref{eqn:Wong2}, the case $m{=}1$ is equivalent to:
\begin{equation}\label{eqn:Wong3}
\int_{\delta}^\infty x^{\alpha} \exp \left[ -\frac{n}{2} \mu x^2 \right] \, dx ~=~ \mathcal{O}( e^{-\epsilon n} ) ~~~(n \to \infty)
\end{equation}
Since $0 \le (x-\delta)^2$, it suffices to show \ref{eqn:Wong3} with $-x^2$ replaced  $-2\delta x + \delta^2$.
When $\alpha{=}0$ this integral is elementary and \ref{eqn:Wong3} follows easily, with $\epsilon = \delta^2 \mu/2$.
For $\alpha \ge 1$ it follows by integration by parts and induction on $\alpha$.  
This takes care of $m{=}1$.
Define  $r := (x_1^2 + \cdots + x_m^2)^{1/2}$.
We have $\abs{x_j} \le r$ and $-\mu_j \le -\min(\mu)$ for all $j$, and so
\begin{equation*}
\abs{x_1}^{\alpha_1} \cdots \abs{x_m}^{\alpha_m} \exp \left[ -\frac{n}{2} \sum_{j=1}^m \mu_j x_j^2 \right] ~~\le~~ r^{\abs{\alpha}} \exp \left[ -\frac{n}{2} \min(\mu) r^2 \right]
\end{equation*}
Now use spherical coordinates and change of variables to integrate the right side over $\mathbb{R}^m \backslash B_{\delta}$
and conclude that the integral in \ref{eqn:Wong3} is bounded above by
\begin{equation*}
\Vol(S^{m-1}) \int_\delta^\infty r^{\abs{\alpha}+m-1} \exp \left[ -\frac{n}{2} \min(\mu) r^2 \right] \, dr
\end{equation*}
where $S^{m-1}$ is the unit sphere in $\mathbb{R}^m$.
Now \ref{eqn:Wong2} follows from \ref{eqn:Wong3}.
\end{proof}

\bigskip

Now we are ready for
\begin{proof}\label{proof:LaplaceMethod.m} of Theorem \ref{thm:LaplaceMethod.m},
based on Wong \cite{wong2001}, Theorem 3, p. 495.
For easier reference, the conclusion of the Laplace Approximation theorem is repeated here:
\begin{equation}\label{eqn:repeatLA}
\int_U  \phi_n(x)f(x)^n \, dx    ~ \sim ~  \abs{\det(f''(x_0))}^{-\frac{1}{2}}   { \left( \frac{2\pi}{n} \right) } ^{ \frac{m}{2} }    \phi_n(x_0) {f(x_0)}^{n+\frac{m}{2}}  \hspace{20pt} (n \to \infty)
\end{equation}
For the conditions $(a),\ldots,(e)$ on $\phi_n(x)$ and $f(x)$ see \ref{thm:LaplaceMethod.m}.

WLOG we can assume $f(x_0){=}1$.
Otherwise replace $f(x)$ by $f(x)/f(x_0)$.
Conditions $(a)$, $(b)$, and $(e)$ are \emph{still} true, with the new $f(x)$.
In the theorem's conclusion, if $f(x)$ is divided by any $a>0$,
the left side and right sides of the equivalence are both divided by $a^n$.

Split the integral
\begin{align}\label{eqn:split}
J(n) &:= \int_U \phi_n(x) f(x)^n \, dx  ~=~  \int_{B_0} \phi_n(x) e^{n h(x)} \, dx  ~+~ \int_{U \backslash B_0} \phi_n(x) f(x)^n \, dx   \nonumber  \\
     &:= J_1(n) ~+~ J_2(n)
\end{align}
Note that $J_1(n)$ is real, but $J_2(n)$ is complex in general.
Consider $J_1(n)$ first. 
If necessary, shrink $B_0$ so the Morse Lemma, \cite{Hirsch1976} p. 145, applies to $h(x)$ at the critical point $x_0$.
There is a neighborhood $\Omega$ of $0$ and a diffeomorphism $g : \Omega \to B_0$ with $g(0)=x_0$ and $\det g'(0) = 1$.
Define $\Phi_n : \Omega \to \mathbb{R}$ by
\begin{align}\label{eqn:Phi_n}
\Phi_n(y) &:= \phi_n(g(y)) \det g'(y)  \nonumber \\
\text{then} ~~~~ J_1(n) ~&=~ \int_\Omega \Phi_n(y) \exp \left[ -\frac{n}{2} \sum_{j=1}^m \mu_j y_j^2 \right] \, dy  ~~~~\text{by change of variables}
\end{align}
Since $f''(x_0)$ is negative-definite, all $\mu_j {>} 0$.
The Morse Lemma loses 2 degrees of differentiability.
So since $f(x)$ is $C^4$, $g(y)$ is $C^2$, and $g'(y)$ is $C^1$.
By $(d)$ $\phi_n(x)$ is $C^1$ and so $\Phi_n(y)$ is $C^1$.
Moreover, by condition $(d)$, $\norm{\Phi_n}_{C^1} \le K_1 \abs{\Phi_n(0)}$ for some $K_1$, where $\norm{\cdot}_{C^1}$ is the $C^1$-norm on $\Omega$.
By the Mean Value Theorem
\begin{equation}\label{eqn:MVT}
\Phi_n(y) =  \Phi_n(0) ~+~ \sum_{\abs{\alpha}{=}1} D^\alpha \Phi_n(\xi) y^\alpha  ~~~~~\text{ for some } \xi \in [0,y]
\end{equation}
Abbreviate $Q(y) := \sum_{j=1}^m \mu_j y_j^2$, and substitute \ref{eqn:MVT} into \ref{eqn:Phi_n} to get
\begin{equation*}
J_1(n) ~=~ \Phi_n(0) \int_\Omega \exp \left[ -\frac{n}{2} Q(y) \right] \, dy  ~~+~~ \sum_{\abs{\alpha}{=}1}  \int_\Omega  D^\alpha \Phi_n(\xi) y^\alpha \exp \left[ -\frac{n}{2} Q(y) \right] \, dy
\end{equation*}
Abbreviate these two terms by $J_3(n) + J_4(n)$ and consider $J_3(n)$ first.
\begin{equation*}
J_3(n) = \Phi_n(0) \int_{\mathbb{R}^m} \exp \left[ -\frac{n}{2} Q(y) \right] \, dy   ~-~   \Phi_n(0) \int_{\mathbb{R}^m \backslash \Omega} \exp \left[ -\frac{n}{2} Q(y) \right] \, dy
\end{equation*}
The first integral can be evaluated exactly, and the absolute value of second integral is bounded by \ref{eqn:Wong2}.
This gives
\begin{equation}\label{eqn:J_3}
J_3(n) = \phi_n(x_0) (\mu_1 \cdots \mu_m)^{-1/2} \left( \frac{2\pi}{n} \right) ^{m/2}  ~~+~~ \abs{\phi_n(x_0)} \mathcal{O} \left( e^{-\epsilon n} \right) \hspace{20pt} (n \to \infty) 
\end{equation}
Since $\mu_1 \cdots \mu_m = \abs{\det h''(x_0)}$, the first term is the asymptotic limit \ref{eqn:repeatLA} we want,
and now we show that the other terms tend to 0.

For $J_4(n)$, we have $\abs{ D^\alpha \Phi_n(\xi) } \le \norm{\Phi_n}_{C^1} \le K_1 \abs{\phi_n(x_0)}$, for some $K_1$ by $(d)$
\begin{align}\label{eqn:J_4}
\abs{J_4(n)} &\le K_1 \abs{\phi_n(x_0)} \sum_{\abs{\alpha}{=}1} \int_\Omega \abs{y}^\alpha \exp \left[ -\frac{n}{2} Q(y) \right] \, dy    \nonumber \\
&\le K_1 \abs{\phi_n(x_0)} \sum_{\abs{\alpha}{=}1} \int_{\mathbb{R}^m} \abs{y}^\alpha \exp \left[ -\frac{n}{2} Q(y) \right] \, dy    \nonumber  \\
&= \abs{\phi_n(x_0)}\mathcal{O} \left( n^{-(m+1)/2} \right)  \hspace{20pt} (n \to \infty) ~~~~~\text{ by } \ref{eqn:Wong1}
\end{align}

Now return to $J_2(n)$ in \ref{eqn:split}
\begin{align*}
\abs{J_2(n)} &\le \int_{U \backslash B_0} \abs{ \phi_n(x)  f(x)^{n} } \, dx ~=~ \int_{U \backslash B_0} \abs{ \phi_n(x) f(x)^{n-n_0} f(x)^{n_0} } \, dx  \nonumber  \\
     &\le M^{n-n_0}  \int_{U \backslash B_0} \abs{ \phi_n(x) f(x)^{n_0} } \, dx ~~~~\text{ since } \abs{f(x)} \le M < f(x_0) = 1 ~~~\text{ by } (b) \nonumber \\
     &\le  M^{n-n_0} K_2 n^k \abs{\phi_n(x_0)} \hspace{30pt} \text{for sufficiently large } n ~~\text{ by } (e)  \nonumber   \\
     &=  M^{-n_0} K_2 n^k M^n \abs{\phi_n(x_0)} \\
     &= \abs{\phi_n(x_0)} \mathcal{O}( n^k e^{-cn} )  \hspace{30pt}  (n \to \infty)   ~~~~\text{ where } c = -\log(M) > 0
\end{align*}
Combine \ref{eqn:split}, \ref{eqn:Phi_n}, \ref{eqn:J_3},  \ref{eqn:J_4}, and the previous estimate and divide by $\phi_n(x_0)$
\begin{equation*}
\frac{1}{\phi_n(x_0)} J(n) = d_0  n^{-\frac{m}{2}} ~+~  \mathcal{O} \left( e^{-\epsilon n} \right) +  \mathcal{O} \left( n^{-\frac{m+1}{2}} \right) + \mathcal{O} ( n^k e^{-cn} ) \hspace{20pt} (n \to \infty) 
\end{equation*}
where $d_0 = (\mu_1 \cdots \mu_m)^{-1/2} (2\pi) ^{m/2}$.
Multiply both sides by $n^{\frac{m}{2}}$ to get
\begin{equation*}
\frac{n^{m/2}}{\phi_n(x_0)} J(n) ~=~ d_0  ~+~   \mathcal{O} \left( n^{m/2} e^{-\epsilon n} \right) +   \mathcal{O} \left( n^{-\frac{1}{2}} \right) +   \mathcal{O} ( n^{k+m/2} e^{-cn} )  \hspace{20pt} (n \to \infty) 
\end{equation*}
The three error terms tend to 0 and so
\begin{align*}
\frac{n^{m/2}}{\phi_n(x_0)} J(n) ~~&\sim~~ d_0   \hspace{30pt} (n \to \infty)  \\
J(n) ~~&\sim~~ \frac{d_0}{n^{m/2}}\phi_n(x_0) ~~=~~  (\mu_1 \cdots \mu_m)^{-1/2} \left( \frac{2\pi}{n} \right) ^ \frac{m}{2} \phi_n(x_0)  \hspace{20pt} (n \to \infty) 
\end{align*}
and we are done.
\end{proof}

\medskip

\section*{Software and Figures}
All calculations and figures were made using the software 
\textbf{R: A Language and Environment for Statistical Computing} \cite{Rlanguage},
and with \textbf{R} packages 
\textbf{colorSpec} \cite{colorSpec} and 
\textbf{rootSolve} \cite{rootSolve}.

\section*{Acknowledgments}
I would like to thank Nik Weaver for the key idea that made this work possible, 
and for the term \emph{directed filtration}.

\bibliographystyle{plain}
\bibliography{references}
\end{document}